\crefname{hypothesis}{Hypothesis}{Hypotheses}
\title{A Viscosity Approach to Stochastic Differential Games of Control and Stopping Involving Impulsive Control}
\author{David Mguni\thanks{ 
  (\email{davidmguni@hotmail.com}).}{ Quantitative and Applied Spatial Economic Research Laboratory, University College London, Gower Street, London, WC1E 6BT, UK.\newline Centre for Doctoral Training in Financial Computing \& Analytics, University College London, Gower Street, London, WC1E 6BT, UK. }}
\newcommand*{\addFileDependency}[1]{
  \typeout{(#1)}
  \@addtofilelist{#1}
  \IfFileExists{#1}{}{\typeout{No file #1.}}
}
\newcommand*{\myexternaldocument}[1]{%
    \externaldocument{#1}%
    \addFileDependency{#1.tex}%
    \addFileDependency{#1.aux}%
}
\begin{document}

\maketitle

\begin{abstract}
  
This paper analyses a stochastic differential game of control and stopping in which one of the players modifies a diffusion process using impulse controls, an adversary then chooses a stopping time to end the game. The paper firstly establishes the regularity and boundedness of the upper and lower value functions from which an appropriate variant of the dynamic programming principle (DPP) is derived. It is then proven that the upper and lower value functions coincide so that the game admits a value and that the value of the game is a unique viscosity solution to a HJBI equation described by a double obstacle quasi-integro-variational inequality.
\end{abstract}

\begin{keywords}
  Impulse control, Stochastic Differential Games, Optimal Stopping, Diffusion process, Dynkin Games, Viscosity solution
\end{keywords}


\section{Introduction}
This paper considers a stochastic differential game in which a controller modifies a diffusion process and an adversary stops the game. The objective of the controller is to exercise an impulse control that minimises an objective criterion whilst the adversary seeks to stop the process at a time which maximises the same function. A motivation for this investigation is a formal mathematical treatment of the probability of lifetime ruin problem which also includes financial transaction costs a problem in which an investor seeks to maximise their lifetime wealth by modifying their investment position whilst minimising the risk that their wealth process falls below some prefixed value.

Problems that combine discretionary stopping and stochastic optimal control have attracted much attention over recent years; in particular there is a significant amount of literature on models of this kind in which a single controller uses continuous controls to modify the system dynamics (see for example \cite{bayraktar2011proving,bayraktar2011minimizing,bayraktar2015minimizing}). Game-theoretic formulations of the problem in which the task of controlling the system dynamics and exit time is divided between two players who act according to separated interests have also been studied \cite{baghery2013optimal}. 

Impulse control problems are stochastic control models in which the cost of control is bounded below by some fixed positive constant which prohibits continuous control, therefore augmenting the problem to one of finding both an optimal sequence of times to apply the control policy, in addition to determining optimal control magnitudes. We refer the reader to \cite{bensoussan1982controle} as a general reference to impulse control theory and to \cite{vath2007model,palczewski2010finite} for articles on applications. Impulse control frameworks therefore underpin the description of financial environments with transaction costs and liquidity risks and more generally, applications of optimal control theory in which the system dynamics are modified by a sequence of discrete actions - see \cite{korn1999some} for an extensive survey of applications of impulse control models within finance. 
 	
Stochastic differential games with impulse control have also recently appeared in the impulse control literature. In \cite{zhang2011stochastic} stochastic differential games in which one player uses impulse control and the other uses continuous controls were studied. Using a verification argument, the conditions under which the value of game is a solution to a HJBI equation is also shown in \cite{zhang2011stochastic}. In \cite{cosso2013stochastic}, Cosso was the first to study a stochastic differential game in which both players use impulse control using viscosity theory. Thus, in \cite{cosso2013stochastic} it is shown that the game admits a value which is a unique viscosity solution to a double obstacle quasi-variational inequality. 

Despite the small but notable literature on impulse controls involving two players, the task of analysing stochastic differential games with a discretionary stopper and a controller who uses impulse controls remains unaddressed. 

Stochastic differential games of continuous control and stopping have a number of applications within theoretical finance. A notable example is the investor lifetime ruin problem in which an investor, who operates in a Black-Scholes market seeks to maximise some utility criterion which is a function of his wealth process whilst seeking some optimal time to exit the market which minimises the risk of ruin (see for example \cite{bayraktar2011minimizing} and \cite{bayraktar2015minimizing}). 

The restriction to continuous controls prohibits the inclusion of transaction costs for which the investor must take actions in a timed and discretised fashion therefore necessitating that the class of controls used to modify the system dynamics are impulse controls. The current paper therefore establishes the necessary results to construct optimal investment policies for the lifetime ruin problem with minimally bounded or transaction costs.

In control theory and more generally, differential game theory there are two main approaches to obtaining a solution to the problem. The first approach is a verification method which involves characterising the value function in terms of a set of (in general, non-linear, second order) PDEs or HJB equations (in the case of differential games, HJBI equations). The verification approach is a direct method that is initialised at the dynamic programming principle from which a classical limit procedure is used to then derive the HJB equations. 

Though the verification approach offers a direct means of establishing a PDE characterisation of the value function, verification theorems impose strong smoothness conditions on the value function that do not hold in a number of optimal control settings. Indeed, verification theorems require that the value function must be everywhere differentiable and have smooth derivatives almost everywhere.\footnote{The assumption that the value function is differentiable everywhere is sometimes referred to as the \textit{“high contact”} or \textit{“smooth fit”} principle.} In problems in which the controller makes discrete modifications to a diffusion process such smoothness assumptions are likely to be violated. In such cases, it is not possible to invoke the mean value theorem to derive the HJB(I) equations via a classical limiting procedure. A second issue is that verification theorems do not address the question of existence of the value of stochastic differential games nor is the question of uniqueness considered: in general, there exist an infinite number of Lipschitz continuous functions that satisfy the HJB(I) equations of verification theorems (see for example exercise 3.2 in \cite{cardaliaguet2010introduction}).

Fortunately, using viscosity theory it can be shown that the value functions of a wide class of stochastic control problems (and consequently stochastic differential games) do in fact satisfy verification theorems when the HJBI equations are interpreted in a weaker, viscosity sense. Indeed, viscosity solutions generalise the notion of a solution to a PDE to a non-classical definition\footnote{Viscosity solutions were introduced by Michael Crandall and Pierre-Louis Lions in 1983  \cite{crandall1983viscosity} and was developed to handle first order HJB equations. The theory was subsequently developed to handle second order equations in part due to a comparison principle result introduced by Robert Jensen in 1988 \cite{jensen1988maximum}.}.  The main advantage of the viscosity solution approach is that it does not require that the (everywhere) smoothness of the value function be established.
\section{Contribution}

This paper analyses a stochastic differential game of control and stopping in which the controller uses impulse controls; the results cover a general setting in which the underlying state process is a diffusion process. We examine the problem using viscosity theory from which we show that the value of the game exists and is a unique viscosity solution to a HJBI equation. A similar game to the one considered in this paper is that contained within \cite{cosso2013stochastic}. However, unlike in our setting in one of the players can choose to terminate the game,  the analysis in \cite{cosso2013stochastic} considers a stochastic differential game in which both players modify the state process using impulse controls and thus incorporates a symmetry in the class of controls used by the players. 
Other related papers to the current are \cite{baghery2013optimal,bayraktar2011proving,bayraktar2013multidimensional} in which conditions for a HJBI equation are proved for stochastic differential games of control and stopping in which the controller however uses continuous controls in the absence of fixed minimal costs.

\section*{The Dynamics: Canonical Description}\

\noindent We suppose that the uncontrolled passive state evolves according to a stochastic process $X:[0,T]\times \Omega \to S\subset  \mathbb{R}^p,\; (p\in \mathbb{N})$, which is a diffusion on $(\mathcal{C}([0,T]; \mathbb{R}^p ),(\mathcal{F}_{{(0,s)}_{s\in [0,T] }}, \mathcal{F}, \mathbb{P}_{0})$ that is, the state process obeys the following SDE:
\begin{align} dX_s^{t_0,x_{0}}=\mu( s,X_s^{t_0,x_{0}} )ds+\sigma(s,X_s^{t_0,x_{0}} )dB_s,\; X_{t_0}^{t_0,x_{0}}:= x_{0},\;\mathbb{P}-{\rm a.s.}&\nonumber
\\\label{uncontrolledstateprocess}
 \forall s\in [0,T],\; \forall (t_0,x_{0})\in [0,T]\times S;& 
\end{align}
where $B_s$ is an $m-$dimensional standard Brownian motion which supported by the filtered probability space and $\mathcal{F}$ is the filtration of the probability space $(\Omega ,\mathbb{P},\mathcal{F}=\{\mathcal{F}_s\}_{s\in [0,T] } )$. 

We assume that the functions $\mu:[0,T]\times S\to S$ and  $\sigma:[0,T]\times S\to \mathbb{R} ^{p\times m}$ are deterministic, measurable functions that are Lipschitz continuous and satisfy a (polynomial) growth condition so as to ensure the existence of (\ref{uncontrolledstateprocess}) \cite{ikeda2014stochastic} (see assumptions  \hyperlink{A.1.1.}{A.1.1.} \&  \hyperlink{AA.2.}{A.2.}).

We note that the above specification of the filtration ensures stochastic integration and hence, the controlled diffusion is well defined (this is proven in  \cite{stroock2007multidimensional}).

The generator of $X$ (the uncontrolled process) acting on some function\\ $\phi\in \mathcal{C}^{1,2} (\mathbb{R}^l,\mathbb{R}^p)$ is given by: 
\begin{equation} 
\mathcal{L}\phi(\cdot,x)=\sum_{i=1}^p   \mu_i (x)    \frac{\partial \phi}{\partial x_i}(\cdot,x)+\frac{1}{2} \sum_{i,j=1}^p  (\sigma \sigma^T )_{ij} (x)    \frac{ \partial^2\phi}{\partial x_i\partial x_{j} }(\cdot,x).
\label{generator}
\end{equation}

In this game there are two players, player I and player II. Throughout the horizon of the game, each player incurs a cost which is a function of the value of the state process. Let the set  $\mathcal{T}$ be a given family of $\mathcal{F}-$measurable stopping times; at any point in the game $\rho \in \mathcal{T}$, player II can choose to terminate the game at which point the state process is stopped and both players receive a terminal cost (reward). Player I influences the state process using impulse controls $u\in \mathcal{U}$ where $u(s)=\sum_{j\geq 1}\xi_j \cdot 1_{\{\tau_j\leq T \}}  (s)$ for all $0\leq t_0< s\leq T$  where $\xi_1,\xi_2,\ldots\in \mathcal{Z}\subset S$ are impulses that are executed at $\mathcal{F}$-measurable stopping times $\{\tau_i\}_{i\in\mathbb{N}}$ where $0\leq t_0\leq \tau_1< \tau_2< \dots <$ and where the set $\mathcal{U}$ is a convex cone that defines the set of player I controls. We assume that the impulses $\xi_j \in \mathcal{Z}$ are $\mathcal{F}-$measurable for all $j \in \mathbb{N}$.  Hence, if an impulse $\zeta \in \mathcal{Z}$ determined by some (admissible) policy $u\in\mathcal{U}$ is applied at some $\mathcal{F}-$measurable stopping time $\tau:\Omega \to [0,T]$ when the state is $x'=X^{t_0,x_0,\cdot} (\tau^-)$, then the state immediately jumps from $x'=X^{t_0,x_0,\cdot} (\tau^-)$ to $X^{t_0,x_0,u} (\tau)=\Gamma (x',\zeta)$ where $\Gamma :S\times \mathcal{Z}\to S$ is called the impulse response function.

Hence, the control $u=u(s,\omega)\in\mathcal{U};\; s\in [0,T],\omega\in\Omega$ exercised by player I is a stochastic process that modifies the state process directly.

For notational convenience, we use $u=[\tau_j,\xi_j ]_{j\geq 1} $ to denote the player I control policy $u=\sum_{j\geq 1}\xi_j  \cdot1_{\{\tau_j\leq T \}}  (s)\in \mathcal{U}$.

The evolution of the state process with actions is given by the following:
\begin{align}\nonumber
\hspace{-1.8 mm}X_r^{t_0,x_0,u}=x_0+\int_{t_0}^{r\wedge\rho}\hspace{-1.5 mm}\mu(s,X^{t_0,x_0,u}_s)ds+\int_{t_0}^{r\wedge\rho}\hspace{-1.5 mm}\sigma(s,X^{t_0,x_0,u}_s)dB_s
+\sum_{j\geq 1}\xi_j  \cdot 1_{\{\tau_j\leq r\wedge \rho \}}  (r)&\nonumber 
\\
\forall r\in [0,T];\;\forall (t_0,x_0)\in [0,T]\times S,\; \mathbb{P}-{\rm a.s.}&
\label{controlledstateprocessch1player}
\end{align}

Without loss of generality we assume that $X^{t_0,x_0,\cdot}_s=x_0$ for any $s\leq t_0$.

Player I has a cost function which is also the player II gain (or profit) function. The corresponding payoff function is given by the following expression which player I (resp., player II) minimises (resp., maximises):  
\begin{align}\nonumber 
J [t_0, x_0  ;u,\rho ]= \mathbb{E}\Bigg[\int_{t_0}^{\tau_s  \wedge\rho} f (s, X_s^{t_0, x_0  ,u } ) ds  &+ \sum_{m\geq 1}  c (\tau_m  , \xi _m  )  \cdot 1_{\{\tau_m  \leq  \tau_S  \wedge\rho \}}
\\&\begin{aligned}+G (\tau_S  \wedge\rho, X_{\tau_S  \wedge\rho }^{t_0, x_0  ,u } )1_{\{\tau_S  \wedge\rho<\infty\}}\Bigg],&\\ 
\forall (t_0,x_0)\in[0,T]\times S,& 
\end{aligned}
\end{align}
where $\tau_S:\Omega\to [0,T] $ is some random exit time, i.e. $\tau_S(\omega):=\inf\{s\in [0,T]|X_s^{t_0, x_0  ,\cdot }\in S\backslash A;\;\omega\in\Omega\}$ where $A$ is some measurable subset of $S$, at which point $\tau_S$ the game is terminated. The functions $f:[0,T]\times S\to \mathbb{R} , G:[0,T]\times S\to \mathbb{R}$ are the running cost function and the bequest function respectively and the function $c:[0,T]\times\mathcal{Z}\to\mathbb{R}$ is the intervention cost function. 

We assume that the function $G$ satisfies the condition $\underset{s\to\infty}{\lim}G(s,x)=0$ for any $x\in S$. Functions of the form $G(s,x)\equiv e^{-\delta s}\bar{G}(x)$ for some $\delta>0$ with $\bar{G}:|\bar{G}(x)|<\infty$ satisfy this condition among others.

The results contained in this paper are built exclusively under the following set of assumptions unless otherwise stated:

\section*{Standing Assumptions}
\

\noindent A.1.1. Lipschitz Continuity \hypertarget{A.1.1.}{}

We assume there exist real-valued constants $c_{\mu},c_{\sigma}>0$ s.th. $\forall  s\in [0, T], \forall  x,y\in S$ we have:
\begin{align*}
|\mu(s,x)-\mu(s,y)|&\leq c_{\mu} |x-y|\\
|\sigma(s,x)-\sigma(s,y)|&\leq c_{\sigma} |x-y|.
\end{align*}
A.1.2. Lipschitz Continuity \hypertarget{A.1.2.}{}

We also assume the Lipschitzianity of the functions $f,G$ and $\phi$ and the so that we assume the existence of real-valued constants $c_f,c_G>0$ s.th.  $\forall  s\in [0, T], \forall  (x,y) \in S$ we have for $R\in \{f,G\}$:
\begin{equation*}
|R(s,x)+R(s,y)|\leq c_R |x-y|.
\end{equation*}
 \noindent A.2. Growth Conditions \hypertarget{AA.2}{}

We assume the existence of a real-valued constants $d_{\mu},d_{\sigma}>0$ s.th. $\forall  (s,x)\in [0, T]\times S$ we have:
\begin{align*}
|\mu(s,x)|&\leq d_{\mu} (|1+|x|^\rho |)
\\
|\sigma(s,x)|&\leq d_{\sigma} (|1+|x|^\rho |).
\end{align*}
We also make the following assumptions on the cost function $c:[0, T]\times S\to \mathbb{R}$:\\
A.3.\ \hypertarget{AA.3.}{}

Let $\tau,\tau' \in \mathcal{T} $ be $\mathcal{F}-$measurable stopping times s.th. $0\leq \tau<\tau'\leq T$ and let $\xi,\xi' \in \mathcal{Z}$ be measurable impulse interventions. Then we assume that the following statements hold:
	\begin{align} c(\tau,\xi+\xi' )&\leq c(\tau,\xi)+c(\tau,\xi' ),\\
	  c(\tau,\xi)&\geq c(\tau',\xi).
\end{align}
A.4.\ \hypertarget{AA.4.}{}

We also assume that there exists a constant $\lambda_c>0$ s.th.  $\inf_{\xi \in \mathcal{Z}}c(s,\xi)\geq \lambda_c\forall  s \in [0,T]$ where $\xi \in \mathcal{Z}$ is a measurable impulse intervention.\

Assumptions  \hyperlink{A.1.1.}{A.1.1.} and  \hyperlink{AA.2.}{A.2.} ensure the existence and uniqueness of a solution to (\ref{uncontrolledstateprocess}) (c.f. \cite{ikeda2014stochastic}). Assumption  \hyperlink{A.1.2.}{A.1.2.} is required to prove the regularity of the value function (see for example \cite{cosso2013stochastic} and for the single-player case, see for example \cite{davis2010impulse}). Assumption  \hyperlink{AA.3.}{A.3.} (i) (subadditivity) is required in the proof of the uniqueness of the value function. Assumption  \hyperlink{AA.3.}{A.3.} (ii) (the player cost function is a decreasing function in time) may be interpreted as a discounting effect on the cost of interventions.  Assumption  \hyperlink{AA.3.}{A.3.} (ii) was introduced (for the two-player case) in \cite{yong1994zero} though is common in the treatment of single-player case problems (e.g. \cite{davis2010impulse,chen2013impulse}). Assumption  \hyperlink{AA.4.}{A.4.} is integral to the definition of the impulse control problem.

Throughout the script we adopt the following standard notation (e.g. \cite{cardaliaguet2010introduction,chen2013impulse,stroock2007multidimensional}):

\section*{Notation}

Let $\Omega$  be a bounded open set on  $\mathbb{R}^{p+1}$. Then we denote by:
$\bar{\Omega}$  - The closure of the set $\Omega$.\\
$Q(s,x;R)={{(s',x' ) \in \mathbb{R} ^{p+1}:\max |s'-s|^{\frac{1}{2}}  ,|x'-x|  }<R,s'<s}$. \\
$\partial \Omega$  - The parabolic boundary $\Omega$  i.e. the set of points $(s,x) \in \bar{\mathcal{S}}$ s.th. $R>0, Q(s,x;R)\not\subset\bar{\Omega}$.\\
$\mathcal{C}^{\{1,2\}} ([0, T],\Omega )=\{h \in C^{\{1,2\}} (\Omega ): \partial_s h, \partial_{x_i,x_{j} } h \in C(\Omega )\}$, where  $\partial_s$ and  $\partial_{x_{i}, x_{j}}$ denote the temporal differential operator and second spatial differential operator respectively.\\
$\nabla\phi=(\frac{\partial \phi}{\partial x_1 },\ldots,\frac{\partial \phi}{\partial x_p})$ - The gradient operator acting on some function $\phi \in C^1 ([0,T]\times \mathbb{R}^p)$.\\
$|\cdot|$   - The Euclidean norm to which $\langle x,y \rangle$  is the associated scalar product acting between two vectors belonging to some finite dimensional space.
\section{Statement of Main Results}\hypertarget{II}{}
In this paper, we prove two main results for the game that characterise the conditions for a HJBI equation in both zero-sum and non-zero-sum impulse controller-stopper stochastic differential games.
	
We prove that the stochastic differential game in which one of the players uses impulse controls and the other player chooses when to end the game admits a value. 

	We prove that the value of the game satisfies a double obstacle quasi-integro-\\variational equality and is a unique viscosity solution to a HJBI equation. 

In particular, in section \hyperlink{V}{V} we show that equality (\ref{viscvaluefunctionequalitytheorem}) holds by firstly showing that $V^+ \text{(resp.; }V^- ) $ is a viscosity supersolution (resp.; subsolution) to the following non-linear obstacle problem: \begin{align}
\{\max\{\min[-\frac{\partial V}{\partial s}-\mathcal{L}V-f,V-G ],V-\mathcal{M}V \}=0 \nonumber\\
V(X^{t,x,u} (\tau_s\wedge\rho))=G(X^{t,x, u} (\tau_s\wedge\rho)) . 	\label{viscp1obstacle}\end{align}
where $\mathcal{L}$ is the local stochastic generator operator associated to the process and $\mathcal{M}$ is the non-local intervention operator.

To our knowledge, this is the first game that involves impulse controls in which the role of one of the players is to stop the game at a desirable point. 
\section*{Organisation}

In section \hyperlink{III}{III}, we provide some of the technical definitions in order to provide a description of the game. Here, we also introduce the underlying concepts required to study the game. In section \hyperlink{IV}{IV}, we prove some preliminary results that underpin the main analysis which is performed in section \hyperlink{V}{V} --- in particular, we prove that the upper and lower value functions are regular and bounded. Where it is of no detriment to the main body of ideas, we postpone some of the technical proofs of the section to the appendix. In section \hyperlink{V}{V}, we introduce the viscosity theory framework --- here we show by way of a dynamic programming principle and comparison principle, that the value of the game is a unique viscosity solution to a HJBI equation. Lastly, the paper is finalised by an \hyperlink{VI}{appendix} to which some of the technical proofs from sections \hyperlink{IV}{IV} - \hyperlink{V}{V} are deferred.
\section*{Definitions} \hypertarget{III}{}
We now give some definitions which we shall need to describe the system dynamics modified by impulse controls:

\begin{definition}\label{Definition 1.2.3.}
Denote by $\mathcal{T}_{(t,\tau')}$  the set of all $\mathcal{F}-$measurable stopping times in the interval $[t,\tau']$, where $\tau'$ is some stopping time s.th. $\tau' \leq T$. If $\tau'=T$ then we denote by $\mathcal{T}\equiv \mathcal{T}_{(0,T)}$. Let $u=[\tau_j,\xi_j]_{j\in\mathbb{N}}$ be a control policy where $\{\tau_j\}_{j\in\mathbb{N}}$ and $\{\xi_j\}_{j\in\mathbb{N}}$ are $\mathcal{F}_{\tau_j}-$ measurable stopping times and interventions respectively, then we denote by $ \mu_{[t,\tau]} (u)$ the number of impulses the controller (player I) executes within the interval $[t,\tau]$ under the control policy $u\in\mathcal{U}$ for some $\tau \in \mathcal{T}$.\end{definition} 

\begin{definition}\label{Definition 1.2.4.}
Let $u\in\mathcal{U}$ be a player I impulse control. We say that an impulse control is admissible on $[0,T]$ if either the number of impulse interventions is finite on average that is to say we have that:
\begin{equation}\mathbb{E}[\mu_{[0,T]} (u)]<\infty	 \end{equation}
or if $\mu_{[0,T]} (u)=\infty\implies\lim_{j\to \infty }\tau_j=\infty$.
\end{definition}
We shall hereon use the symbol $\mathcal{U}$ to denote the set of admissible player I controls. For controls $u\in \mathcal{U}$ and $u'\in \mathcal{U}$, we interpret the notion $u\equiv u'$ on $[0,T]$ iff $\mathbb{P}(u=u'$ a.e. on $[0,T])=1$. 

\begin{definition}\label{Definition 1.2.5.}
Let $u(s)=\sum_{j\geq 1}\xi_j  \cdot 1_{\{\tau_j\leq T \}}  (s) \in \mathcal{U}$ be a player I impulse control defined over $[0,T]$, further suppose that $\tau:\Omega \to [0,T]$ and $\tau':\Omega \to [0,T]$ are two $\mathcal{F}-$measurable stopping times with $\tau\geq s>\tau'$, then we define the restriction $u_{[\tau',\tau ]} \in \mathcal{U}$ of the impulse control $u(s)$ to be $u(s)=\sum_{j\geq 1}\xi_{\mu_{]t_0,\tau)}(u)+j}  \cdot 1_{\{\tau_{\mu_{]t_0,\tau)}(u)+j} \geq s\geq \tau' \}}  (s)$. 
\end{definition}
\section*{Strategies}

A player strategy is a map from the other player's set of controls to the player's own set of controls. An important feature of the players' strategies is that they are non-anticipative --- neither player may guess in advance, the future behaviour of other players given their current information.

We formalise this condition by constructing non-anticipative strategies which were used in the viscosity solution approach to differential games in \cite{fleming1989existence}. Non-anticipative strategies were introduced by \cite{roxin1969axiomatic,elliott1972existence,varaiya1967existence}. Hence, in this game, one of the players chooses his control and the other player responds by selecting a control according to some strategy.
\begin{definition}\label{Definition 1.1.1.}
A non-anticipative strategy on $[0,T]$ for player I is a measurable map which we shall denote by $\alpha $ s.th. $\alpha :\mathcal{T}\to \mathcal{U}$ and for any stopping time $\tau:\Omega \to [0,T]$ and any $v_1,v_2  \in \mathcal{T}$  with $v_1\equiv v_2$ on $[t,\tau]$ we have that $\alpha (v_1 )\equiv \alpha (v_2 )$ on $[t,\tau]$.
\end{definition} 

We define the player II non-anticipative strategy $\beta :\mathcal{U}\to\mathcal{T}$ analogously. Hence, $\alpha$  and $\beta$  are Elliott-Kalton strategies.

Following the notation in \cite{cardaliaguet:double_obstacle}, we denote the set of all non-anticipative strategies  over the time horizon $[0,T]$ for player I (resp., player II) by $\mathcal{A}_{(0,T)}$ (resp., $\mathcal{B}_{(0,T )}$).

\begin{remark}\label{remark 1.1.2.}
The intuition behind Definition \ref{Definition 1.1.1.} is as follows: suppose player I employs $u_1\in\mathcal{U}   $ and the system follows a path $\omega$  and that player II employs the strategy $\beta \in \mathcal{B}_{(0,T )}$ against the control $u_1$. If in fact player II cannot distinguish between the control $u_1$ and some other player I control $u_2\in\mathcal{U}   $ then controls $u_1$ and $u_2$ induce the same response from the player II strategy that is to say $\beta (u_1 )\equiv \beta (u_2)$.
\end{remark} 
The notion of non-anticipative strategies is derived from the fact that the strategies defined in Definition \ref{Definition 1.1.1.} exclude the possibility of a player exploiting future information of their opponent's control modifications. 

Note that when $\mathcal{U}$ is a singleton the game is degenerate and collapses into a classical optimal stopping problem for player II with a value function and solution as that in ch.3 in \cite{oksendal2005applied}. Similarly, when $\mathcal{T}$ is a singleton the game collapses into a classical impulse control problem for player I with a value function and solution as that in ch.6 in \cite{oksendal2005applied}.

The following definition is a key object in the analysis of impulse control models:
\begin{definition}\label{Definition 2.1.1.}
Let $\tau\in\mathcal{T}$, we define the [non-local] intervention operator $\mathcal{M}:\mathcal{H}\to \mathcal{H}$ acting at a state $X(\tau)$ by the following expression:
\begin{equation}
\mathcal{M} \phi(\tau,X(\tau)):=\inf_{z\in \mathcal{Z}}[\phi(\tau,\Gamma (X(\tau^-),z))+c(\tau, z)\cdot 1_{\{\tau\leq T \}}  ],\label{intervention_operator_definition_equation} \end{equation}
for some function $\phi :[0,T]\times S\to \mathbb{R}$ and $\Gamma : S\times \mathcal{Z}\to  S$ is the impulse response function.
\end{definition}

Of particular interest is the case when the intervention operator is applied to the value function $\mathcal{M}V(\cdot,x)$ --- a quantity which represents the value of a strategy when the controller performs an optimal intervention then behaves optimally thereafter given an immediate optimal intervention taken at a state $x\in S$. The intuition behind (\ref{intervention_operator_definition_equation}) is as follows: suppose at time $\tau^-$ the system is at a state $X(\tau^-)$ and an intervention $z\in\mathcal{Z}$ is applied to the process, then a cost of $c(\tau,z)$ is incurred and the state then jumps from $X(\tau^-)$ to $\Gamma(X(\tau^-),z)$. If the controller acts optimally thereafter, the cost of this strategy, starting at state $\Gamma(X(\tau^-),z)$ is $V(\tau,\Gamma(X(\tau^-),z)+c(\tau,z)$. Lastly, choosing the action that minimises costs leads to $\mathcal{M}V$.

\begin{remark}\label{remark 1.2.7.}
We note that whenever it is optimal for the controller to intervene, $\mathcal{M}V=V$ since the value function describes the player payoff under optimal behaviour. However, at any given point an immediate intervention may not be optimal, hence the following inequality holds pointwise:
\begin{align}
\mathcal{M}V(s,x)&\geq V(s,x), 	\label{intervention_operator_inequality_one_player} \qquad \hfill \forall  (s,x)\in [0,T]\times S.     
\end{align} 
\end{remark}
\section{Preliminaries} \hypertarget{IV}{}
 Given the remarks of section \hyperlink{III}{III}, we now define the value functions of the game. As in \cite{fleming1989existence}, we define the value functions in terms of Elliot-Kalton strategies introduced in \cite{elliott1972existence}:
\\
\section*{Value Functions}

\begin{align}
V^- (t,x)&= \inf_{\alpha \in \mathcal{A}_{(0,T)}}  \sup_{\rho' \in \mathcal{T}}  J[t,x;\alpha(\rho'),\rho' ];\hspace{2 mm} \label{viscvaluefunctionminus}
\\
V^+(t,x)&=\sup_{\beta \in \mathcal{B}_{(0,T)}}    \inf_{u' \in \mathcal{U}}  J[t,x;u',\beta(u' )],\qquad \forall (t,x) \in [0,T]\times S,
\label{viscvaluefunctionplus} 
\end{align}
where $\alpha\in\mathcal{A}_{(0,T)}$ and $\beta\in\mathcal{B}_{(0,T)}$ are player I and player II (non-anticipative) strategies respectively (c.f. Definition \ref{Definition 1.1.1.}), $\mathcal{T}$ is a given family of $\mathcal{F}-$measurable stopping times and $\mathcal{U}$ is the set of player I admissible impulse controls. 

We refer to $V^-$ and $V^+$ as the upper and lower value functions respectively.

We say that the value of the game exists if we can commute the supremum and infimum operators in (\ref{viscvaluefunctionplus}) and (\ref{viscvaluefunctionminus}) where after we can deduce the existence of a function $V \in \mathcal{H}$ s.th. $V(t,x)\equiv V^- (t,x)=V^+ (t,x)$ for any $(t,x)\in [0,T]\times S$. We use the notation $V^{\pm}$ to mean any element drawn from the set $ \{V^+,V^-\}$.
\

\begin{theorem}\label{viscvaluefunctionequalitytheorem}
The value of the game exists and is given by: 
\begin{equation}
V(t,x)=V^- (t,x)=V^+(t,x), \quad \forall (t,x) \in [0,T]\times S. 	
\end{equation}
\end{theorem}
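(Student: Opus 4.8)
The plan is to establish the two inequalities $V^{+}(t,x)\le V^{-}(t,x)$ and $V^{-}(t,x)\le V^{+}(t,x)$ for every $(t,x)\in[0,T]\times S$ separately. The first is elementary: fixing $(t,x)$ and $\varepsilon>0$, one takes an $\varepsilon$-optimal strategy $\alpha\in\mathcal{A}_{(0,T)}$ in (\ref{viscvaluefunctionminus}) and an $\varepsilon$-optimal strategy $\beta\in\mathcal{B}_{(0,T)}$ in (\ref{viscvaluefunctionplus}), constructs from the non-anticipativity property of Definition \ref{Definition 1.1.1.} a fixed-point pair $(u^{\ast},\rho^{\ast})\in\mathcal{U}\times\mathcal{T}$ with $\alpha(\rho^{\ast})\equiv u^{\ast}$ and $\beta(u^{\ast})\equiv\rho^{\ast}$ on $[0,T]$, and then chains $\inf_{u'}J[t,x;u',\beta(u')]\le J[t,x;u^{\ast},\rho^{\ast}]\le\sup_{\rho'}J[t,x;\alpha(\rho'),\rho']$; letting $\varepsilon\downarrow0$ and taking $\sup_{\beta}$ on the left and $\inf_{\alpha}$ on the right gives $V^{+}\le V^{-}$.

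For the reverse inequality the plan is to use the viscosity machinery. First, using the dynamic programming principle derived in Section \hyperlink{IV}{IV} together with the boundedness and regularity of $V^{\pm}$ proved there, I would show by the standard test-function argument that $V^{-}$ is a viscosity subsolution and $V^{+}$ a viscosity supersolution of the obstacle problem (\ref{viscp1obstacle}), and that both satisfy the terminal condition $V(X^{t,x,u}(\tau_{S}\wedge\rho))=G(X^{t,x,u}(\tau_{S}\wedge\rho))$; in this step the two obstacles $V-G$ (stopping) and $V-\mathcal{M}V$ (intervention) are treated by separating the cases in which immediate stopping, immediate intervention, or continuation is optimal, in the spirit of Remark \ref{remark 1.2.7.}. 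Then a comparison principle for (\ref{viscp1obstacle}) --- a bounded u.s.c. subsolution is everywhere $\le$ any bounded l.s.c. supersolution carrying the same terminal data --- yields $V^{-}\le V^{+}$. Combining the two inequalities gives $V^{-}=V^{+}=:V$, which proves the theorem; moreover the same comparison principle shows that $V$ is the unique viscosity solution, as asserted in the introduction.

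I expect the comparison principle, and within it the non-local intervention term $\mathcal{M}$, to be the main obstacle. The Jensen--Ishii doubling-of-variables argument does not close directly because $\mathcal{M}\phi(\tau,x)$ in Definition \ref{Definition 2.1.1.} depends on the values of $\phi$ at the shifted states $\Gamma(x,z)$, so the nonlocal term of the subsolution is not automatically controlled by that of the supersolution. The standard remedy in impulse-control viscosity theory applies here: at a point realising the maximum of the difference of sub- and supersolution one argues that either the nonlocal obstacle is inactive, in which case the local part of the inequality (the parabolic operator and the stopping obstacle) produces the usual contradiction, or it is active, and then the subadditivity of the intervention cost (Assumption \hyperlink{AA.3.}{A.3.}(i)) together with the uniform lower bound $\lambda_{c}>0$ (Assumption \hyperlink{AA.4.}{A.4.}) let one replace the configuration by its image under an optimal intervention while strictly decreasing an auxiliary penalised functional, so that after finitely many iterations one is reduced to the inactive case. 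Making this iteration precise, and separately justifying the existence of the fixed-point pair used in the easy inequality in the genuinely continuous-time setting, are the two places where the argument requires real care.
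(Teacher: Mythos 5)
Your proposal is correct and follows essentially the same route as the paper: the inequality $V^{+}\le V^{-}$ coming from the definitions (which the paper simply records as automatic in Remark \ref{remark 2.1.2.}, whereas you supply the fixed-point argument for the pair of non-anticipative strategies), combined with the DPP, the viscosity sub/supersolution properties of $V^{\pm}$, and a comparison principle for the obstacle problem (\ref{viscp1obstacle}) to obtain $V^{-}\le V^{+}$. Your observation that the non-local operator $\mathcal{M}$ is the delicate point of the comparison argument is well taken --- the paper's appendix proof handles only the local part via Ishii's lemma and passes over the intervention obstacle --- so the iteration you sketch using Assumptions \hyperlink{AA.3.}{A.3.}(i) and \hyperlink{AA.4.}{A.4.} is, if anything, more complete than what the paper records.
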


\begin{remark}\label{remark 2.1.2.}
By definition of the value functions, we automatically have:
\begin{equation}
V^- (t,x)\geq V^+(t,x),\quad \forall (t,x) \in [0,T]\times S. 	\label{viscvaluefunctioninequalitystandard}
\end{equation}
\end{remark}

To prove Theorem \ref{viscvaluefunctionequalitytheorem}, it therefore remains to establish the reverse inequality of (\ref{viscvaluefunctioninequalitystandard}). 

We now prove the regularity of the value functions associated to the game. Related results can be found in \cite{cosso2013stochastic} where the regularity (Lipschitzianity in the spatial variable, H\"{o}lder continuity in time) for the corresponding two-player impulse control game with an uncontrolled process.\

\begin{lemma}\label{Lemma 2.1.3.}
The functions $V^-$  and $V^+$ can be equivalently expressed by the following: 
\begin{align}
V^- (t,x)&= \inf_{\alpha \in \bar{\mathcal{A}}_{(t,T)}}  \sup_{\rho \in \mathcal{T}\backslash \{t\}}  J[t,x;\alpha(\rho),\rho],\label{viscvaluefunctionminusredefine}	
\\
V^+(t,x )&=\sup_{\beta \in \hat{\mathcal{B}}_{(t,T)}}\inf_ {u \in \bar{\mathcal{U}}_{(t,T )}}  J[t,x;u,\beta(u)],\qquad \forall  (t,x) \in [0,T]\times S,\label{viscvaluefunctionplusredefine}	
\end{align}
where $\bar{\mathcal{U}}_{(t,T )}$ is the set of player I admissible controls which have no impulses at time $t$ and correspondingly, $\bar{\mathcal{A}}_{(t,T)}$ (resp., $\hat{\mathcal{B}}_{(t,T)} $ is the set of all player I (resp., player II) non-anticipative strategies with controls drawn from the set $\bar{\mathcal{U}}_{(t,T )}$ (resp., $\mathcal{T} \backslash\{t\}$).
\end{lemma}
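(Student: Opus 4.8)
The plan is to prove the two identities in two moves: a reduction of the time horizon from $[0,T]$ to $[t,T]$, and then the removal of the degenerate initial-instant actions --- an intervention by player I exactly at time $t$, or a stop by player II exactly at time $t$.

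\emph{Horizon reduction.} Fix $(t,x)$. By the definition of the state dynamics (\ref{controlledstateprocessch1player}) and of the payoff $J$, both $r\mapsto X^{t,x,u}_r$ and $J[t,x;u,\rho]$ depend on the pair $(u,\rho)$ only through its restriction to $[t,T]$: the state is frozen at $x$ on $[0,t]$, the running-cost integral and the intervention sum start at $t$, the intervention times obey $\tau_j\ge t$, and $\rho$ enters only through $\tau_S\wedge\rho\ge t$. Hence restricting a non-anticipative strategy in $\mathcal{A}_{(0,T)}$ (resp.\ $\mathcal{B}_{(0,T)}$) so that it acts only over $[t,T]$ is a surjection onto the corresponding strategy set on $[t,T]$, it preserves non-anticipativity (Definition \ref{Definition 1.1.1.}), and it leaves $J[t,x;\cdot,\cdot]$ unchanged; conversely every strategy on $[t,T]$ lifts back to $[0,T]$ by an arbitrary extension on $[0,t)$. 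This reduces (\ref{viscvaluefunctionminus})--(\ref{viscvaluefunctionplus}) to infima/suprema over strategies and controls supported on $[t,T]$, with unchanged values.

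\emph{Removing the time-$t$ actions.} Denote by $\tilde V^{\pm}$ the right-hand sides of (\ref{viscvaluefunctionminusredefine})--(\ref{viscvaluefunctionplusredefine}). The equalities $\tilde V^{\pm}=V^{\pm}$ follow from two families of approximation arguments, threaded through the $\inf$--$\sup$ of $V^-$ and the $\sup$--$\inf$ of $V^+$, combined with the trivial set-inclusion estimates ($\sup$ over a smaller set is smaller, $\inf$ over a smaller set is larger). First, player II's option to stop exactly at $t$, which yields $J[t,x;u,t]=G(t,x)$ for every admissible $u$, is recovered in the limit: for $\rho_n\in\mathcal{T}\setminus\{t\}$ with $\rho_n\downarrow t$ one has $J[t,x;u,\rho_n]\to G(t,x)$, using the continuity of $G$ (Assumption \hyperlink{A.1.2.}{A.1.2.}), the continuity of the controlled flow together with the standard moment bounds (Assumptions \hyperlink{A.1.1.}{A.1.1.}, \hyperlink{AA.2}{A.2.}), and the fact that, when $u$ makes no intervention at $t$, the number of interventions in $[t,\rho_n]$ vanishes almost surely while each costs at least $\lambda_c>0$ (Assumptions \hyperlink{AA.3.}{A.3.}, \hyperlink{AA.4.}{A.4.}); hence $\sup_{\rho\in\mathcal{T}\setminus\{t\}}J\ge G(t,x)$. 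Second, an intervention $\xi_0\in\mathcal{Z}$ performed by player I at time $t$ is traded for the same intervention performed at $t+\varepsilon$: by Assumption \hyperlink{AA.3.}{A.3.}(ii) this does not raise the intervention cost, and as $\varepsilon\downarrow0$ the remaining terms of $J$ converge by flow continuity and continuity of $G$, so an intervention at the initial instant cannot strictly help the minimiser --- in agreement with the inequality $\mathcal{M}V^{\pm}(t,x)\ge V^{\pm}(t,x)$ of Remark \ref{remark 1.2.7.}. (A symmetric deferral of a stop from $t$ to $t+\varepsilon$ handles player II in the definition of $V^+$, where player II acts through a strategy rather than an open-loop stopping time.) Threading these perturbations through the outer optimisations yields (\ref{viscvaluefunctionminusredefine})--(\ref{viscvaluefunctionplusredefine}).

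\emph{Main obstacle.} The technical heart is the second move, and within it the controller side: the ``defer by $\varepsilon$'' perturbation must be applied to an entire strategy rather than to one control, so one has to check that the perturbed object is still non-anticipative and admissible and that $J[t,x;\alpha_\varepsilon(\rho),\rho]\to J[t,x;\alpha(\rho),\rho]$ uniformly in $\rho$ when the strategy and its argument are moved simultaneously --- a Gronwall/moment estimate that leans on the regularity of the data (including of the impulse-response map $\Gamma$) supplied by the standing assumptions. The analogous reduction in the two-player impulse game is carried out in \cite{cosso2013stochastic}, and the present argument follows that template, with the limit $\rho_n\downarrow t$ for the stopper playing the role of one of the two intervention-deferral steps there.
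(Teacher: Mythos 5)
Your proposal is correct and follows essentially the same route as the paper: the paper's proof likewise defers any time-$t$ intervention and any time-$t$ stop to $t+\tfrac{1}{n}$ (its $u_n$, $\tau_n$, $\rho_n$ construction), uses Assumption A.3 to control the intervention costs, and passes to the limit via convergence of the controlled flow and dominated convergence. Your preliminary horizon-reduction step and the explicit discussion of threading the perturbation through non-anticipative strategies are slightly more detailed than what the paper records, but they do not constitute a different method.
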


\begin{proof}

The proof of the lemma is similar to that of Lemma 3.3 in \cite{cosso2013stochastic} with some modifications: 

The main idea is to prove that for all $ (t,x ) \in [0,T]\times S$, there exists a control $\bar{u} \in \mathcal{U}\backslash \bar{\mathcal{U}}_{(0,T )} $ and an $\mathcal{F}-$ measurable stopping time $\bar{\rho} \in \mathcal{T}\backslash {t'}$ where $\bar{\mathcal{U}}_{(0,T )}\subset \mathcal{U}$ is the set of admissible impulse controls $\mathcal{U}$ which excludes impulses at time $t'$, for which the following inequality holds $\forall  u \in \mathcal{U}$, given some  $\mathcal{F}-$measurable stopping time $\rho \in \mathcal{T}$ and for some $\epsilon >0$: \begin{equation}
\left|J[t',x;u,\beta(u)]-J[t',x;\bar{u},\bar{\beta}(\bar{u})]\right|+\left|J[t',x;\alpha(\rho),\rho]-J[t',x;\bar{\alpha}(\bar{\rho}),\bar{\rho}]\right|\leq \epsilon .
\end{equation}

We prove the result for the case in which player I exercises only one intervention at the point $t$ since the extension to multiple interventions is straightforward.

W.l.o.g., we can employ the following short-hands $\beta(u)\equiv \rho \in \mathcal{T}, \bar{\beta}(\bar{u})\equiv \bar{\rho} \in \mathcal{T}_{t' }, \alpha(\rho)\equiv u \in \mathcal{U}$ and $\bar{\alpha}(\bar{\rho})\equiv \bar{u} \in \bar{\mathcal{U}}_{(0,T )} $. The result is proven by constructing the following control and stopping times:
\begin{equation*}\noindent
u_n=\xi\cdot 1_{[\tau_n,T]} +u',\end{equation*}
where $\tau_n=(\tau+\frac{1}{n}) \cdot 1_{\{\tau=t'\}}+\tau\cdot 1_{\{\tau>t'\}}$
and
\begin{equation*} 
\rho_n=\left(\rho+\frac{1}{n}\right) \cdot 1_{\{\rho=t'\}}+\rho\cdot 1_{\{\rho>t'\}},\end{equation*}
where $u'=\sum_{j\geq  1} \xi_j \cdot 1_{[\tau_j,T]} $.

Hence, we find that:
\begin{align*}
&\qquad\qquad\qquad\qquad\qquad\qquad\qquad J[t',x_0;u,\rho]-J[t',x_0;u_n,\rho_n]
\\&\begin{aligned}
= \mathbb{E}\Bigg[\int_{t'}^{\rho\wedge \tau_S}&f(s,X_s^{t' , x_0  ,u} )ds  - \int_{t'}^{\rho_n  \wedge \tau_S}f(s,X_s^{t' , x_0  , u_n} )ds  - \sum_{j\geq  1}  c (\tau_j  , \xi_j  ) \cdot 1_{\{\rho\leq  \tau_j  < \rho_n  \}}
\\&
+\left(G(\rho\wedge \tau_S,X_{\rho\wedge\tau_S}^{t' , x_0  ,u} )-G(\rho_n\wedge \tau_S,X_{\rho_n\wedge \tau_S}^{t' , x_0  , u_n } )\right)\cdot1_{\{\tau_S\wedge\rho_n<\infty\}}\Bigg] 
\end{aligned}
\\
&\begin{aligned}
=\mathbb{E}\Bigg[\int_{t'}^{\rho\wedge\tau_S} \big(f(s,X_s^{t',x_0,u})&-f(s,X_s^{t',x_0,u_n } )\big)ds-\int_{\rho\wedge\tau_S}^{\rho_n\wedge\tau_S} f(s,X_s^{t',x_0,u_n } )ds
\\&\begin{aligned}  +\left(G(\rho\wedge\tau_S,X_{\rho\wedge\tau_S}^{t',x_0,u} )-G(\rho_n\wedge\tau_S,X_{\rho_n\wedge\tau_S}^{t',x_0,u_n} )\right)\cdot1_{\{\tau_S\wedge\rho_n<\infty\}}
&\\-\sum_{j\geq  1} c(\tau_j,\xi_j ) \cdot 1_{\{\rho\leq \tau_j<\rho_n \}}\Bigg].&
\end{aligned}
\end{aligned}
\end{align*}
 
 We now readily observe that $X_s^{t',x_0,u_n }\to X_s^{t',x_0,u},\; \mathbb{P}-$a.s. Additionally, by construction, $\rho_n\to \rho$ as $n\to  \infty, \mathbb{P}$-a.s. hence, after invoking the dominated convergence theorem, we can deduce the existence of an integer $N\geq 1$ s.th. $\forall  \epsilon >0$ and $\forall  n\geq N$ such that: 
\begin{equation}
J[t',x_0;u,\rho]-J[t',x_0;u_n,\rho_n]\leq \epsilon . \end{equation}
 The proof can be extended to multiple impulse case (at time $t'$) straightforwardly after employing conditions A3 (iii) and A3 (iv) where after the proof easily reduces to the single impulse case.
\end{proof}

Given Lemma \ref{Lemma 2.1.3.}, we can now seek to prove that the upper and lower value functions associated to the game are Lipschitz continuous in the spatial variable and $\frac{1}{2}$-H\"{o}lder continuous in time, that is, we prove the following proposition:
\begin{proposition}\label{Proposition 2.1.4.}
We can deduce the existence of constants $c_1,c_2>0$ s.th. the following results hold:  
\renewcommand{\theenumi}{\roman{enumi}}
 \begin{enumerate}
\item	$|V^-  (t, x')- V^-  (t,x )|+ |V^+  (t, x')- V^+  (t,x )|\leq  c_1  |x'-x |,$
	\item$|V^-  (t',x )- V^-  (t,x) |+ |V^+  (t',x )- V^+  (t,x) |\leq  c_2  |t'-t |^{\frac{1}{2}}$. 
\end{enumerate} 
\qquad\qquad\qquad\qquad\qquad\qquad\qquad\qquad\qquad\qquad\qquad $\forall (t,x),(t',x') \in [0,T]\times S$.
\end{proposition}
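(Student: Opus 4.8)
The plan is to obtain both estimates from uniform bounds on how the payoff functional $J[t,x;u,\rho]$ changes with its initial data $(t,x)$ for a \emph{fixed} admissible pair $(u,\rho)$, and then to transport these bounds through the $\inf\sup$ / $\sup\inf$ defining $V^{\pm}$ by the usual near-optimal-strategy argument, working throughout with the representations of Lemma~\ref{Lemma 2.1.3.} so that neither player acts exactly at the initial instant. The analytic input is the classical moment control for the controlled diffusion~(\ref{controlledstateprocessch1player}) provided by Assumptions~\hyperlink{A.1.1.}{A.1.1.} and~\hyperlink{AA.2}{A.2}: there is a constant $C$, independent of $u$, such that for all $t\le t'$ and $x,x'\in S$,
\begin{align*}
\mathbb{E}\Big[\sup_{s\in[t,T]}\big|X^{t,x,u}_s-X^{t,x',u}_s\big|^2\Big]&\le C\,|x-x'|^2,\\
\mathbb{E}\Big[\sup_{s\in[t,t']}\big|X^{t,x,u}_s-x\big|^2\Big]&\le C\,(1+|x|^2)\,(t'-t).
\end{align*}
The first bound uses that the \emph{same} impulse sequence drives both $X^{t,x,u}$ and $X^{t,x',u}$, so the jump terms cancel in the difference and Gronwall's lemma applies exactly as in the jump-free case (if the jumps act through a response map $\Gamma$ rather than additively one additionally needs $\Gamma$ Lipschitz in its state argument). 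I would also record that, since each intervention costs at least $\lambda_c>0$ by Assumption~\hyperlink{AA.4.}{A.4} and the data are of at most linear growth by~\hyperlink{A.1.2.}{A.1.2.}, any control $u$ that is $\varepsilon$-optimal for player~I satisfies $\mathbb{E}\big[\#\{m:\tau_m\le\tau_S\wedge\rho\}\big]\le K$ for a constant $K$ depending only on the data; this keeps the intervention-cost sums from obstructing the comparisons below.

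\emph{Step 1 (spatial Lipschitz continuity).} Fix $t$, $x$, $x'$. For $V^-$, given $\varepsilon>0$ choose $\alpha\in\bar{\mathcal A}_{(t,T)}$ with $\sup_{\rho}J[t,x';\alpha(\rho),\rho]\le V^-(t,x')+\varepsilon$ and compare using the same strategy at $x$:
\[
V^-(t,x)-V^-(t,x')\le \sup_{\rho\in\mathcal T\setminus\{t\}}\big|J[t,x;\alpha(\rho),\rho]-J[t,x';\alpha(\rho),\rho]\big|+\varepsilon .
\]
With $u=\alpha(\rho)$ the bracketed difference splits into: a running-cost term $\le c_f\,\mathbb{E}\!\int_t^{T}\!|X^{t,x,u}_s-X^{t,x',u}_s|\,ds\le C|x-x'|$ by the first moment estimate; an intervention-cost term, which cancels except for the mismatch between the exit times $\tau_S$ attached to $x$ and to $x'$, controlled by the impulse-count bound $K$ and the continuity in probability of the exit time of the diffusion in its starting point; and a bequest term $\le c_G\big(\mathbb{E}[\sup_s|X^{t,x,u}_s-X^{t,x',u}_s|^2]\big)^{1/2}$ plus a further mismatch term, again $\le C|x-x'|$, with all constants uniform in $\rho$ and $u$. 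Hence $V^-(t,x)-V^-(t,x')\le C|x-x'|+\varepsilon$; exchanging $x$ and $x'$ and letting $\varepsilon\downarrow0$ gives the estimate for $V^-$, and the mirror argument --- choosing $\beta\in\hat{\mathcal B}_{(t,T)}$ near-optimal for player~II and then an inner control $u$ near-optimal against it --- gives it for $V^+$. Adding the two yields~(i).

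\emph{Step 2 (temporal $\tfrac12$-Hölder continuity).} Fix $x$ and $t<t'$, put $h=t'-t$. Since $[t,T]$- and $[t',T]$-objects live on different horizons I would transplant them: a control or strategy on $[t',T]$ is extended to $[t,T]$ by prescribing no impulse (resp.\ no stopping) on $[t,t']$, and a stopping time $\rho\in\mathcal T\setminus\{t\}$ is pushed forward to $\rho\vee t'\in\mathcal T\setminus\{t'\}$, which preserves non-anticipativity and moves $\rho$ by at most $h$. For a transplanted pair $(u,\rho)$, $J[t,x;u,\rho]-J[t',x;u,\rho]$ then decomposes into: the running cost accrued over $[t,t']$, bounded by $\mathbb{E}\!\int_t^{t'}\!|f(s,X^{t,x,u}_s)|\,ds\le Ch\le Ch^{1/2}$ (using $h\le T$) via the second moment estimate; the difference of the $[t',T]$-contributions, controlled because $X^{t,x,u}$ restarted at $t'$ is the flow issued from $X^{t,x,u}_{t'}$ while $X^{t',x,u}$ is the flow issued from $x$, so $\big(\mathbb{E}[\sup_{s\ge t'}|X^{t,x,u}_s-X^{t',x,u}_s|^2]\big)^{1/2}\le C\big(\mathbb{E}|X^{t,x,u}_{t'}-x|^2\big)^{1/2}\le Ch^{1/2}$, which enters the $f$- and $G$-terms through $c_f,c_G$; and the error from replacing $\rho$ by $\rho\vee t'$ in the bequest and running-cost terms, again $O(h^{1/2})$ by path continuity and the Lipschitzness of $G$. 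Feeding these uniform bounds through the $\inf\sup$ / $\sup\inf$ with a near-optimal strategy on the appropriate side, exactly as in Step~1, gives $|V^{\pm}(t',x)-V^{\pm}(t,x)|\le Ch^{1/2}$, which is~(ii).

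\emph{Expected main obstacle.} The flow estimates themselves are routine; the genuine work is the bookkeeping of the intervention-cost sums $\sum_m c(\tau_m,\xi_m)\,1_{\{\tau_m\le\tau_S\wedge\rho\}}$ when the initial data --- and hence $\tau_S$, together with the transplanted stopping time --- change, which is exactly where the a~priori impulse-count bound $K$ and the continuity of the exit time are needed, combined with a transplantation of non-anticipative strategies between the horizons $[t,T]$ and $[t',T]$ carried out so as not to destroy near-optimality. Once these are in place, both (i) and~(ii) follow from the same near-optimal-strategy comparison.
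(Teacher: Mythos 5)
Your overall architecture --- uniform estimates on $J[t,x;u,\rho]$ in the initial data for a fixed pair $(u,\rho)$, the flow moment estimates, and the transfer of these bounds through the $\inf\sup$/$\sup\inf$ by near-optimal strategies --- is the same as the paper's, and your Step 1 is essentially the paper's proof of (i) (the paper routes the final step through Lemma 3.6 of \cite{cardaliaguet2010introduction} rather than writing out the $\varepsilon$-optimal comparison, which is cosmetic; if anything you are more careful about the exit-time mismatch than the paper is).

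The gap is in Step 2, at exactly the point you flag as the main obstacle. Your transplantation only goes one way: extending a $[t',T]$-object to $[t,T]$ by doing nothing on $[t,t']$ is fine, but to close the two-sided estimate you must also map a near-optimal player-I control on $[t,T]$ --- which may place impulses $\xi_j$ at times $\tau_j\in[t,t')$ --- to an admissible control on $[t',T]$. You cannot simply drop those impulses (the two trajectories would then differ by $\sum_j\xi_j$, which is not $O(|t'-t|^{1/2})$), and the substitutes you propose (the a priori impulse-count bound $K$ and continuity of the exit time) do not address this: the difficulty is not the \emph{number} of impulses but the change in their \emph{execution times} and the induced change in the intervention-cost sum. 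The paper's device is to relocate and aggregate all impulses occurring in $[t,t')$ into the single impulse $\sum_{\tau_j\le t'}\xi_j$ executed at $t'$; this keeps the post-$t'$ trajectories within $O(|t'-t|^{1/2})$ of each other in expectation and, crucially, does not increase the intervention cost because of Assumption A.3: subadditivity $c(\tau,\xi+\xi')\le c(\tau,\xi)+c(\tau,\xi')$ combined with monotonicity $c(\tau,\xi)\ge c(\tau',\xi)$ for $\tau<\tau'$ yields
\begin{equation*}
\sum_{\tau_j\le t'}c(\tau_j,\xi_j)\;\ge\; c\Bigl(t',\sum_{\tau_j\le t'}\xi_j\Bigr),
\end{equation*}
which is inequality (\ref{costdiscounteqn}) in the paper and is what disposes of the cost terms with the correct sign. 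Your proposal never invokes Assumption A.3, and without it the cost difference is not controlled by any power of $|t'-t|$, so the $\tfrac12$-H\"older bound does not follow. The remainder of your Step 2 (the $O(h)$ running-cost sliver over $[t,t']$, the restart estimate $\mathbb{E}|X^{t,x,u}_{t'}-x|^2\le Ch$, and the $\rho\mapsto\rho\vee t'$ shift) is consistent with the paper's argument.
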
 
Proposition \ref{Proposition 2.1.4.} establishes an important property of the game --- small changes in the input variables of the value functions lead to small changes in the game. This result is crucial for deriving the DPP for the game which describes the behaviour of the value function under infinitesimal variations.

\begin{proof}
We separate the proof into two parts, proving the spatial Lipschitzianity (i) first, then the temporal $\frac{1}{2}$-H\"{o}lder-continuity (ii) last.

To show that the value functions are Lipschitz continuous in the spatial variable, it suffices to show that the property is satisfied for the function $J$.

The proof is straightforward since the result follows as an immediate consequence of the Lipschitzianity of the constituent functions. In particular we have:
 \begin{align*}
 &\left| J[t,x;\cdot,\cdot]-J[t,x';\cdot,\cdot]\right|
 \\&\leq \mathbb{E}\left[\int_t^{\tau_s\wedge\rho} \left| f(s,X_s^{t,x,\cdot} )-f(s,X_s^{t,x',\cdot} )\right|  ds+\left| G(\tau_S\wedge\rho,X_{\tau_S\wedge\rho}^{t,x,\cdot} )-G(\tau_S\wedge\rho,X_{\tau_S\wedge\rho}^{t,x',\cdot} )\right|\right]
\\&\begin{aligned}
\leq c_f \int_t^{\tau_s\wedge\rho} \mathbb{E}\left[\left| X_s^{t,x,\cdot}-X_s^{t,x',\cdot} \right| \right]ds+c_G\mathbb{E} \left[\left| X_{\tau_S\wedge\rho}^{t,x,\cdot}-X_{\tau_S\wedge\rho}^{t,x',\cdot} \right|\right], 
\\&\hspace{-5 mm}\forall  (t,x' ),(t,x) \in [0,T]\times S,
\end{aligned}
\end{align*}
where $c_f,c_G>0$ are Lipschitz constants for the function $f$ and $G$ respectively (see assumption \hyperlink{A.1.2.}{A.1.2.}).
Therefore, as an immediate consequence of Lemma \hyperlink{Lemma A.1}{A.1}, we see that we can deduce the existence of a constant $c>0$ s.th.
\begin{equation}
|J[t,x;\cdot,\cdot]-J[t,x';\cdot,\cdot]|\leq c|x-x' |. \label{viscJlipschitzspatial}	
\end{equation}

We note also that since the constituent functions of $J$ are bounded, $J$ is also bounded; hence by (\ref{viscJlipschitzspatial}) and by Lemma 3.6 in \cite{cardaliaguet2010introduction}, we therefore conclude that:
\begin{equation}
|V^{\pm} (t,x)-V^{\pm} (t,x')|\leq c|x-x'|,	
\end{equation}
for some constant $c>0$ as required.

To show that the value functions are Lipschitz continuous in the temporal variable, we show that (ii) is satisfied by the function $V^+$ with the proof for the function $V^-$ being analogous.

 We firstly note that: 
\begin{align}
V^+ (t',x)-V^+ (t,x)=\sup_{\mu \in \mathcal{B}_{(0,T)}}   \inf_{u \in \mathcal{U}}  J[t',x;u,\mu(u)] -\sup_{\mu \in \mathcal{B}_{(0,T)}}   \inf_{u \in \mathcal{U}}  J[t,x;u,\mu(u)]&\nonumber\\
\forall  (t,x' ),(t,x) \in [0,T]\times S.& 
\label{vminusveqjminusj}
\end{align}
By (i), we can deduce the existence of some $\epsilon -$optimal strategy $\hat{\mu} \in \mathcal{B}_{(0,T)} $\\against $V^+ (t,x) $ s.th. $V^+ (t,x)-\epsilon \leq  \inf_{u \in \mathcal{U}} J[t,x;u,\hat{\mu}(u)],$ $ \forall  (t,x) \in [0,T]\times S$ where $\epsilon >0$ is arbitrary. Hence, by (\ref{vminusveqjminusj}) we have that: 
\begin{align}
V^+ (t',x)-V^+ (t,x)-2\epsilon \leq    \inf_{u \in \mathcal{U}} J[t',x;u,\hat{\mu}(u)] -   \inf_{u \in \mathcal{U}} J[t,x;u,\hat{\mu}(u)].
\end{align} 
Let us now construct the control $u_\epsilon =\sum_{j\geq  1} \xi_j^\epsilon  \cdot 1_{[\tau_j^\epsilon ,T ]} $  which is associated with the strategy $\alpha^\epsilon  \in \mathcal{A}_{(0,T)} $. Let us also construct the control $u'_\epsilon  \in \mathcal{U}_{[t',T]}$  using the following expression: $u'_\epsilon =\sum_{\tau_j^\epsilon \leq t'} \xi_j^\epsilon  \cdot 1_{t'} +\sum_{\tau_j^\epsilon >t'} \xi_j^\epsilon  \cdot 1_{[\tau_j^\epsilon ,T]} $  which is associated to the strategy $\hat{\alpha}^\epsilon$ so that the control $u'_\epsilon$ is simply the control $u_\epsilon  \in \mathcal{U}$ except that the impulse interventions within the interval $[t,t'[$ are now pushed to $t'$.

Now thanks to Lemma \ref{Lemma 2.1.3.}, we have that $|J[t,x;u,\cdot]-J[t,x;\bar{u},\cdot]|<\epsilon$  where $\epsilon >0$ is arbitrary and where $\bar{u} \in \bar{\mathcal{U}}$ is the set of player I admissible controls that have no impulses at time $t$. Hence, by Lemma \ref{Lemma 2.1.3.} and using the $\epsilon -$optimality of the strategy $\hat{\mu} \in \mathcal{B}_{(0,T)} $, we can therefore deduce the following inequality: \begin{equation}
V^+ (t',x)-V^+ (t,x)-3\epsilon \leq J[t',x;u'_\epsilon ,\mu(u'_\epsilon  )] - J[t,x;u_\epsilon ,\hat{\mu}(u_\epsilon  )]. \label{VplusholderproofJminusJ}	\end{equation}
Let us denote by $\hat{\rho}:=\mu(u'_\epsilon  )\equiv \hat{\mu}(u_\epsilon  )$  and define $\bar{\rho} \in \mathcal{T} $ by: 
\begin{equation}
\bar{\rho}:=\begin{cases}
  t', \hspace{3 mm} \{\hat{\rho}<t'\}  \\
  \hat{\rho},\hspace{3 mm}\{\hat{\rho}\geq t'\}.
  \end{cases}
  \end{equation}
Hence, we have that:
\begin{align*}
J[t',x;u'_\epsilon ,\bar{\mu}(u'_\epsilon  )]&-J[t,x;u_\epsilon ,\hat{\mu}(u_\epsilon  )] =J[t',x;u'_\epsilon ,\bar{\rho}] - J[t,x;u_\epsilon ,\hat{\rho} ].
\end{align*}
From which we can now calculate that:
\begin{align*}
&\qquad\qquad\qquad\qquad\qquad\qquad J[t',x;u'_\epsilon ,\bar{\rho}] - J[t,x;u_\epsilon ,\hat{\rho} ]
\\&\begin{aligned}=-\mathbb{E}\Bigg[ \int_t^{\hat{\rho}\wedge\tau_S} &f(s,X_s^{t,x,u_\epsilon} )ds+\sum_{j\geq  1} c(\tau_j^\epsilon ,\xi_j^\epsilon  ) \cdot 1_{\{t< \tau_j^\epsilon <\hat{\rho}\wedge\tau_S \}} \\&
-\left(G\left(\bar{\rho}\wedge\tau_S,X_{\bar{\rho}\wedge\tau_S}^{t',x,u'_\epsilon } \right)-G\left(\hat{\rho}\wedge\tau_S,X_{\hat{\rho}\wedge\tau_S}^{t,x,u_\epsilon } \right)\right)\cdot 1_{\{\hat{\rho}\wedge \tau_s<\infty\}} \Bigg] \cdot 1_{\{\hat{\rho}\leq t' \}} 
\end{aligned}
\\&\begin{aligned}\quad+\mathbb{E}&\Bigg[ \int_{t'}^{\hat{\rho}\wedge\tau_S)}f(s,X_s^{t',x,u'_\epsilon } )ds- \int_t^{\hat{\rho}\wedge\tau_S} f(s,X_s^{t,x,u_\epsilon} )ds
\\&\qquad+\sum_{j\geq  1} \left(c\left(\tau_j^{'\epsilon} ,\xi_j^{'\epsilon}  \right) \cdot 1_{\{t'< \tau_j^{'\epsilon}<k\} }
 -   c(\tau_j^\epsilon ,\xi_j^\epsilon  ) \cdot 1_{\{t< \tau_j^\epsilon<k \}}\right)\cdot\delta^k_{\hat{\rho}\wedge\tau_S}
 \\&\qquad+\left(G\left(\bar{\rho}\wedge\tau_S,X_{\bar{\rho}\wedge\tau_S}^{t',x,u'_\epsilon } \right)-G\left(\hat{\rho}\wedge\tau_S,X_{\hat{\rho}\wedge\tau_S}^{t,x,u_\epsilon } \right)\right)\cdot 1_{\{\hat{\rho}\wedge \tau_s<\infty\}} \Bigg] \cdot 1_{\{\hat{\rho}> t' \}} 
\end{aligned} 
\\&\begin{aligned}
=\mathbb{E}\Bigg[ \int_{t'}^{\hat{\rho}\wedge\tau_S}&f(s,X_s^{t',x,u'_\epsilon } )ds- \int_t^{\hat{\rho}\wedge\tau_S} f(s,X_s^{t,x,u_\epsilon} )ds+c\left( t',\sum_{\tau_j^\epsilon \leq t'} \xi_j^\epsilon\right)-\sum_{\tau_j^\epsilon \leq t'} c\left(\tau_j^\epsilon ,\xi_j^\epsilon\right) 
\\&+\Big(G\left(\hat{\rho}\wedge\tau_S,X_{\hat{\rho}\wedge\tau_S}^{t',x,u'_\epsilon} \right)-G\left(\hat{\rho}\wedge\tau_S,X_{\hat{\rho}\wedge\tau_S}^{t,x,u_\epsilon }\right)\Big) \cdot 1_{\{\infty>\hat{\rho}> t' \}}\;\;\;&\nonumber
\\&+\Big(G\left(\hat{\rho}\wedge\tau_S,X_{\hat{\rho}\wedge\tau_S}^{t,x,u_\epsilon } \right)-G\left(\bar{\rho}\wedge\tau_S,X_{\bar{\rho}\wedge\tau_S}^{t',x,u'_\epsilon }\right)\Big) \cdot 1_{\{\hat{\rho}\leq t'<\infty \}}\Bigg]. 
\end{aligned}
\end{align*}
Now by assumption \hyperlink{A.3}{A.3}, we have that: 
\begin{equation}
\sum_{\tau_j^\epsilon \leq t'} c(\tau_j^\epsilon ,\xi_j^\epsilon  ) \geq c(t',\sum_{\tau_j^\epsilon \leq t'} \xi_j^\epsilon  ). 	\label{costdiscounteqn}\end{equation}
Hence, we find that: 
\begin{align}
&
J[t',x;u'_\epsilon ,\bar{\mu}(u'_\epsilon  )]-J[t,x;u_\epsilon ,\hat{\mu}(u_\epsilon  )]
\nonumber\\&
\begin{aligned}
\leq  \mathbb{E}\Bigg[\int_{t'}^{\hat{\rho}\wedge \tau_S} f(s,X_s^{t'  , x  , u'_\epsilon   } )ds &- \int_t^{\hat{\rho}\wedge \tau_S } f(s,X_s^{t, x  , u_\epsilon} )ds \Big
]
\\&+\sup_{\hat{\rho} \in  \{t',\hat{\rho}\}}\mathbb{E}\left[\left|G (\hat{\rho}\wedge \tau_S,X_{\hat{\rho}\wedge \tau_S}^{t, x  , u_\epsilon} )-G (\hat{\rho}\wedge \tau_S,X_{\hat{\rho}\wedge \tau_S}^{t', x  , u'_\epsilon} )\right|\right] ,
\label{JholdercontproofnoC} 
\end{aligned}
\end{align}
where we have used (\ref{costdiscounteqn}) to remove the cost terms. Now by the Lipschitz continuity of $G$ (c.f. \hyperlink{A.1.2.}{A.1.2.}) and Lemma \hyperlink{Lemma A.1.3}{A.1.3}, we can deduce the existence of a constant $c>0$ s.th. $\forall  s \in [0,T] $ we have that: 
\begin{equation}
\mathbb{E}\left[\left|G(s,X_s^{t',x,\cdot} )-G(s,X_s^{t,x,\cdot} )\right|\right]\leq c \sup_{s \in [0,T]} \mathbb{E}\left[\left|X_s^{t',x,\cdot}-X_s^{t,x,\cdot} \right|\right]\leq c\left|t-t' \right|^{\frac{1}{2}}, 
\label{Gholdercont}\end{equation}
where $c$ denotes some arbitrary constant (which may differ in each step of the proof).
Moreover, we can arrive at the result after observing a boundedness property of $f$ and invoking the Lipschitz property then appealing to the statements of Lemma \hyperlink{Lemma A.1.3}{A.1.3}. Indeed, using Fubini's Theorem and interchanging the integral and expectation, we have that: \begin{align}
\mathbb{E}\Bigg[ \int_t^{t'} f(s,X_s^{t,x,\cdot} )ds\Bigg]&\leq c  \mathbb{E} \Bigg[ \int_t^{t'}\sup_{s \in [t,t' ]} (1+|X_s^{t,x,\cdot} |)ds\Bigg]\nonumber\\
= c  \int_t^{t'}\sup_{s \in [t,t' ]} (1+\mathbb{E}[|X_s^{t,x,\cdot} |])ds&
\leq c(t'-t)(1+|x|) \in \mathbb{L}, \label{viscfbound}	
\end{align}
for some $c>0$  and where we have used the continuity of $f$.
Therefore, $\mathbb{E}[ \int_t^{t'} f(\cdot)ds] $ is bounded from above by $c(t'-t)$ for some $c>0$.
Using Fubini's Theorem, we now observe that we can deduce the existence of some constant $c>0$ s.th. 
\begin{align}
&\qquad\qquad\mathbb{E}\left[\int_{t'}^{\hat{\rho}\wedge \tau_S} f(s,X_s^{t'  , x  , \cdot   } )ds- \int_t^{\hat{\rho}\wedge \tau_S } f(s,X_s^{t, x  , \cdot} )ds \right
]
\nonumber
\\&\leq\mathbb{E}\left[\left| \int_t^{\rho'\wedge\tau_S} f(s,X_s^{t',x,\cdot} )-f(s,X_s^{t,x,\cdot} )ds+\int_t^{t'} f(s,X_s^{t,x,\cdot} )ds\right|  \right]\nonumber
\\&
\leq c\int_{t}^{\tau_S} \mathbb{E}|X_s^{t',x,\cdot}-X_s^{t,x,\cdot} |ds+c  \int_t^{t'}\sup_{s \in [t,t' ]} (1+\mathbb{E}[|X_s^{t,x,\cdot} |])ds 
\leq c|t-t' |^{\frac{1}{2}},	
\label{fpartlipscitz}
\end{align}
using the Lipschitzianity of $f$, (\ref{viscfbound}) and (ii) and (iii) of Lemma \hyperlink{Lemma A.1.3}{A.1.3}. 

Hence, after plugging (\ref{fpartlipscitz}) and (\ref{Gholdercont}) into (\ref{JholdercontproofnoC}) and (\ref{VplusholderproofJminusJ}), we can deduce that there exists a constant $c>0$ s.th. for $\epsilon >0$ the following estimate holds: 
\begin{align}
V^+  (t, x  )&- V^+  (t'  , x  )\leq |J[t',x;u'_\epsilon ,\bar{\mu}(u'_\epsilon  )]-J[t,x;u_\epsilon ,\hat{\mu}(u_\epsilon  )]|+3\epsilon \leq c|t-t' |^{\frac{1}{2}}+3\epsilon.	\label{vplusholderproofwithepsilon}
\end{align} 
Now, since $\epsilon$  in (\ref{vplusholderproofwithepsilon}) is arbitrary, we can deduce the existence of a constant $c>0$ s.th. 
\begin{equation}
|V^+ (t,x )-V^+ (t',x )|\leq c|t-t' |^{\frac{1}{2}},\quad \forall(t,x ),(t',x ) \in [0,T]\times S,
\end{equation}
after which we deduce (ii) holds for the function $V^+$.

To deduce that (ii) holds for the function $V^-$, we note that analogous to (\ref{vminusveqjminusj}), we have that: 
\begin{equation}
V^- (t',x)-V^- (t,x)= \hspace{-1 mm}\inf_{\alpha \in \mathcal{A}_{(0,T)}}  \sup_{\rho \in \mathcal{T}}  J[t',x;\alpha(\rho),\rho] - \inf_{\alpha \in \mathcal{A}_{(0,T)}}\sup_{\rho \in \mathcal{T}}  J[t,x;\alpha(\rho),\rho]. 	
\end{equation}
In a similar way to the proof of (ii) for $V^+$ we can deduce the existence of a constant $c>0$ s.th. 
\begin{equation}
|V^- (t,x )-V^- (t',x )|\leq c|t-t' |^{\frac{1}{2}}, 	
\end{equation}
from which we deduce the thesis.

\end{proof}
The following proposition establishes the boundedness of the value functions:
\begin{proposition}\label{Proposition 2.1.5}
The value functions $V^{\pm}$ are both bounded.
\end{proposition}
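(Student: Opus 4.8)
The plan is to bound $V^+$ and $V^-$ above and below by one explicit constant, treating the two directions by different elementary estimates. Put $M:=T\sup_{[0,T]\times S}|f|+\sup_{[0,T]\times S}|G|$, which is finite by the boundedness of the running cost and bequest functions (the same property already invoked in the proof of Proposition~\ref{Proposition 2.1.4.}). Recall also that by assumption~\hyperlink{AA.4.}{A.4} every admissible intervention satisfies $c(s,\xi)\ge\lambda_c>0$, so the intervention term $\sum_{m\ge1}c(\tau_m,\xi_m)\,1_{\{\tau_m\le\tau_S\wedge\rho\}}$ in $J$ is pathwise nonnegative.

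For the lower bound I would note that for \emph{every} admissible control $u\in\mathcal U$ and stopping time $\rho\in\mathcal T$, since $\tau_S\wedge\rho\le T$ one has pathwise $\int_t^{\tau_S\wedge\rho}f(s,X_s^{t,x,u})\,ds\ge -T\sup|f|$, the intervention sum $\ge0$, and $G(\tau_S\wedge\rho,\cdot)\,1_{\{\tau_S\wedge\rho<\infty\}}\ge-\sup|G|$; taking expectations, $J[t,x;u,\rho]\ge-M$ uniformly in $(u,\rho,t,x)$. Since the quantities entering the $\sup_\beta\inf_u$ in~(\ref{viscvaluefunctionplus}) and the $\inf_\alpha\sup_\rho$ in~(\ref{viscvaluefunctionminus}) are all $\ge-M$, both $V^+$ and $V^-$ are $\ge-M$ on $[0,T]\times S$.

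For the upper bound the intervention term now works against the estimate, so I would test against the null control $\mathbf 0\in\mathcal U$ (no impulses, hence trivially admissible since $\mathbb E[\mu_{[0,T]}(\mathbf 0)]=0$), for which the intervention sum vanishes, whence $J[t,x;\mathbf 0,\rho]=\mathbb E\big[\int_t^{\tau_S\wedge\rho}f(s,X_s^{t,x})\,ds+G(\tau_S\wedge\rho,X_{\tau_S\wedge\rho}^{t,x})\,1_{\{\tau_S\wedge\rho<\infty\}}\big]\le M$ for every $\rho$. For $V^+$, bounding the inner infimum by this single control gives $V^+(t,x)=\sup_\beta\inf_u J[t,x;u,\beta(u)]\le\sup_\beta J[t,x;\mathbf 0,\beta(\mathbf 0)]\le M$. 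For $V^-$, bounding the outer infimum by the constant strategy $\bar\alpha\equiv\mathbf 0$ --- which is non-anticipative in the sense of Definition~\ref{Definition 1.1.1.}, the defining implication being trivially satisfied by a constant map --- gives $V^-(t,x)=\inf_\alpha\sup_\rho J[t,x;\alpha(\rho),\rho]\le\sup_\rho J[t,x;\mathbf 0,\rho]\le M$. Combining the two directions, $|V^\pm(t,x)|\le M$ for all $(t,x)\in[0,T]\times S$, which is the claim.

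I do not anticipate any real obstacle: the result is essentially bookkeeping once one records that the intervention cost has a definite sign --- so it may be discarded in the lower estimate but must be annihilated by a well-chosen test control in the upper estimate --- and that the null control together with its constant strategy are admissible objects. If one prefers to rely on the polynomial growth hypotheses \hyperlink{A.1.2.}{A.1.2} and \hyperlink{AA.2}{A.2} rather than outright boundedness of $f,G$, the same scheme goes through with $\sup|f|,\sup|G|$ replaced by the linear-growth constants and the moment estimate of Lemma~\hyperlink{Lemma A.1}{A.1} invoked as in~(\ref{viscfbound}), provided $S$ is bounded.
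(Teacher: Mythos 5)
Your proof is correct, and it takes a genuinely different and more economical route than the paper's. The paper argues only the upper bound explicitly: it decomposes the controlled terminal state as the uncontrolled state plus the accumulated impulses, compares $G$ and $f$ along the two trajectories via the Lipschitz hypotheses, Gronwall's lemma and the moment estimates of Lemma \hyperlink{Ap.4}{A.3}, and disposes of the intervention term by asserting that admissibility ($\mathbb{E}[\mu_{[t_0,T]}(u)]<\infty$) yields a deterministic bound $\lambda$ on $\sum_{j}c(\tau_j,\xi_j)$; the resulting estimate is of the form $c(1+|x|)$, so it is uniform only under the same implicit boundedness of $f,G$ (or of $S$) that you flag. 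Your argument instead isolates the one structural fact that matters --- the intervention cost is pathwise nonnegative by \hyperlink{AA.4.}{A.4} --- so it can simply be dropped for the lower bound and annihilated by the null control (and its constant, trivially non-anticipative strategy) for the upper bound. This buys two things: it produces the lower bound, which the paper's proof never addresses, and it avoids the paper's weakest step, since $\mathbb{E}[\mu_{[t_0,T]}(u)]<\infty$ does not by itself furnish a uniform deterministic $\lambda$ bounding the cost sum over all admissible $u$. Your closing remark correctly identifies the shared hypothesis both arguments lean on ($f,G$ bounded, or linear growth together with bounded $S$), which the paper itself invokes without comment in the proof of Proposition \ref{Proposition 2.1.4.}.
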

\begin{proof}
We do the proof for the function $V^-$ with the proof for $V^+$ being analogous.
Recall that: 
\begin{align}\nonumber
V^- (t_0,x_0)=\hspace{-3 mm} \inf_{\alpha \in \mathcal{A}_{(0,T)}}   \sup_{\rho \in \mathcal{T}}  \mathbb{E}\Bigg[ \int_{t_0}^{\rho\wedge\tau_S} f(s,X_s^{t_0,x_0,\alpha(\rho)} )ds&+\sum_{j\geq  1} c(\tau_j (\rho),\xi_j (\rho)) \cdot 1_{\{\tau_j (\rho)\leq \rho\wedge\tau_S \}}  
\\&
+G(\rho\wedge\tau_S,X_{\rho\wedge\tau_S}^{t_0,x_0,\alpha(\rho)} )1_{\{\rho\wedge\tau_S<\infty\}} \Bigg]. 	
\end{align}
Now let $u_0  \in \mathcal{U}$ be the player I control with which no impulses exercised. Then
\begin{equation}
X_{\tau_S }^{t_0  , x_0  ,u }\equiv X_{\tau_S}^{t_0, x_0  , u_0}+\sum_{j=1}^{\mu_{[t_0,T]} (u )} \xi_j  . 	
\end{equation}
If we denote by $Y_{\tau_S}^{t_0, x_0  ,u}:=X_{\tau_S}^{t_0, x_0  , u_0} + \sum_{j\geq  1}  \xi_j   \cdot 1_{\{\tau_j  < \tau_S  \wedge\rho \}}$ and using Lemma \hyperlink{Lemma A.1.3}{A.1.3} and by Gronwall's lemma we have that:
\begin{align*}
&\mathbb{E}\left[G (\tau_S,Y_{\tau_S}^{t_0, x_0  ,\alpha(\rho)} )-G (\tau_S,X_{\tau_S}^{t_0, x_0  , u_0} )\right]  
\\&\leq  c_2 \mathbb{E}[|Y_{\tau_S}^{t_0, x_0  , \alpha(\rho)}-X_{\tau_S}^{t_0, x_0  , u_0} | ]  
\\&\leq c_1 \mathbb{E}[((\rho\wedge\tau_S)-t_0 )^{\frac{1}{2}} )] ,
\end{align*} 
for some $c_1,c_2>0$. Moreover, since $u \in \mathcal{U}$ (hence  $\mathbb{E}[\mu_{[t_0,T]} (u)]< \infty$) we can find some $\lambda>0$ s.th. $\sum_{j\geq  1}  c (\tau_j  , \xi_j  ) \cdot 1_{\{\tau_j  < \tau_S  \wedge\rho \}}  \leq \lambda$ and hence: 
\begin{align*}
&\mathbb{E}\left[\sum_{j\geq  1}  c (\tau_j  , \xi_j  )  \cdot 1_{\{\tau_j  < \tau_S  \wedge\rho \}}  + G(\rho\wedge\tau_S,X_{\rho\wedge\tau_S}^{t_0,x_0,\alpha(\rho)}  )\right]
\\&\begin{aligned}= \mathbb{E}\Bigg[\sum_{j\geq  1}  c (\tau_j  , \xi_j  ) \cdot 1_{\{\tau_j  < \tau_S  \wedge\rho \}}   + \Big((G (\tau_S,Y_{\tau_S}^{t_0, x_0  ,\alpha(\rho)} )&-G(\tau_S,X_{\tau_S}^{t_0, x_0,u_0} )\Big)
\\&+G(\tau_S,X_{\tau_S}^{t_0, x_0,u_0} ))\Bigg] 
\end{aligned}
\\&\leq \mathbb{E}\left[G(\tau_S,X_{\tau_S}^{t_0,x_0,u_0} )+\lambda+c_1 ((\rho\wedge\tau_S)-t_0)^{\frac{1}{2}} \right].
\end{align*} 
Since by similar reasoning we can deduce that $ \mathbb{E}\left[\int_{t_0}^{\rho\wedge \tau_S} f(s,X_s^{t_0, x_0  ,u } )ds \right]\\\leq  \mathbb{E}[\int_{t_0}^{\rho\wedge \tau_S}f(s,X_s^{t_0, x_0  , u_0} )ds +c((\rho\wedge \tau_S  )- t_0  )^{\frac{1}{2}}  \cdot 1_{\{\mu_{[t_0, \tau_S]} (u )\}}  ]$ for some $c>0$; using the continuity of the functions $f$  and $G$  we find that: 
\begin{align*}
V^-  (t_0,x_0)&\leq   \sup_{\rho \in \mathcal{T}}   \mathbb{E}\Bigg[\int_{t_0}^{\rho\wedge \tau_S}f(s,X_s^{t_0, x_0  , u_0} )ds+G(\tau_S,X_{\tau_S}^{t_0, x_0,u_0} )\cdot 1_{\{{\rho}\wedge \tau_s<\infty\}}
\\&\qquad\qquad\qquad\qquad\qquad\;+ (\lambda+ c_1 |(\rho\wedge \tau_S  )- t_0  |^{\frac{1}{2}} ) \cdot 1_{\{\mu_{[t_0, \tau_S]} (u )\}}  \Bigg]
\\&\begin{aligned}
\leq    \sup_{\rho \in \mathcal{T}}  \mathbb{E}\Bigg[ \int_{t_0}^{\rho\wedge\tau_S} c_2 (1+\mathbb{E}[|X_s^{t_0,x_0,u_0} |])ds+c_3 (1+\mathbb{E}[|X_{\rho\wedge\tau_S}^{t_0,x_0,u_0} |])&
\\
+\left(\lambda+c_1 |(\rho\wedge\tau_S)-t_0 |^{\frac{1}{2}} \right) \cdot 1_{\{\mu_{[t_0,\tau_S]} (u)\}}  \Bigg] 	&
\end{aligned}
\\&\leq \sup_{\rho \in \mathcal{T}}  \mathbb{E}\left[\alpha+(\lambda+c_1 |(\rho\wedge\tau_S)-t_0|^{\frac{1}{2}} ) \cdot 1_{\{\mu_{[t_0,\tau_S]} (u)\}}  \right], 	
\end{align*} 
using Lemma \hyperlink{Lemma A.1.3}{A.1.3} and where $\alpha:= ((\rho\wedge \tau_S)-t_0)\cdot [c_2 + c_3](1+|x|)$  and $c_1>0$ and $c_2>0$ are constants. We then deduce the thesis since each of the terms inside the square bracket is bounded. 
\end{proof}

\begin{lemma}\label{Lemma 2.1.6.}
Let $V \in \mathcal{H}$ be a bounded function and $ (\tau,x) \in [0,T]\times S$ where $\tau$ is some $\mathcal{F}-$ measurable stopping time, then the set $\Xi(\tau,x) $ defined by: 
\begin{equation}
\Xi(\tau,x):=\left\{\xi \in \mathcal{Z}:\mathcal{M}V(\tau^-,x)=V(\tau,x+\xi)+c(\tau, \xi) \cdot 1_{\{\tau\leq T\}}  \right\}
\end{equation}
is non-empty.
\end{lemma}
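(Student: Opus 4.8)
The plan is to show that the infimum defining the intervention operator is attained. Since every $\tau \in \mathcal{T}$ takes values in $[0,T]$ we have $1_{\{\tau \le T\}} = 1$, and since the impulse response is additive ($\Gamma(x,\xi) = x+\xi$ here), Definition \ref{Definition 2.1.1.} reads $\mathcal{M}V(\tau^-,x) = \inf_{z \in \mathcal{Z}}\bigl[V(\tau, x+z) + c(\tau, z)\bigr]$, so $\Xi(\tau,x)$ is precisely the set of minimisers of $z \mapsto V(\tau, x+z) + c(\tau, z)$ over $\mathcal{Z}$. As $\tau(\omega)$ is, for each $\omega$, a fixed number in $[0,T]$, it suffices to fix an arbitrary $t \in [0,T]$ and $x \in S$ and prove that $m := \inf_{z \in \mathcal{Z}}\bigl[V(t, x+z) + c(t,z)\bigr]$ is realised. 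First I would note that $m$ is finite: $m \ge -\|V\|_\infty + \lambda_c > -\infty$ by Assumption \hyperlink{AA.4.}{A.4} together with boundedness of $V$, while $m < \infty$ because $\mathcal{Z} \ne \emptyset$, $\Gamma$ maps into $S$, and $V$ is bounded.

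Next I would take a minimising sequence $(z_n)_{n\ge 1} \subset \mathcal{Z}$ with $V(t, x+z_n) + c(t, z_n) \to m$. Boundedness of $V$ then yields, for $n$ large, $c(t, z_n) \le m + \|V\|_\infty + 1 =: K < \infty$, a uniform bound on the intervention cost along the sequence. Invoking the coercivity of $c(t,\cdot)$ in the impulse variable (equivalently, using that $\mathcal{Z}$ is compact), this confines $(z_n)$ to a bounded subset of $\mathbb{R}^p$, so by Bolzano--Weierstrass a subsequence $z_{n_k} \to \xi^\ast$ converges, with $\xi^\ast \in \mathcal{Z}$ since $\mathcal{Z}$ is closed.

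Finally I would pass to the limit. Since $V \in \mathcal{H}$, the map $V(t,\cdot)$ is (Lipschitz) continuous --- in the case of interest $V = V^{\pm}$ this is Proposition \ref{Proposition 2.1.4.} --- so $V(t, x+z_{n_k}) \to V(t, x+\xi^\ast)$; combining this with lower semicontinuity of $z \mapsto c(t,z)$ gives $V(t, x+\xi^\ast) + c(t,\xi^\ast) \le \liminf_k \bigl[V(t, x+z_{n_k}) + c(t, z_{n_k})\bigr] = m$. The reverse inequality $V(t, x+\xi^\ast) + c(t,\xi^\ast) \ge m$ is immediate from the definition of the infimum, so equality holds, i.e. $\xi^\ast \in \Xi(t,x)$ and hence $\Xi(\tau,x) \ne \emptyset$.

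I expect the compactness step (the second paragraph) to be the main obstacle: if $\mathcal{Z}$ is not taken to be compact outright, the argument leans on the coercivity of the intervention cost $c(t,\cdot)$ together with boundedness of $V$ to keep the minimising sequence in a compact set, and on lower semicontinuity of $c$ and closedness of $\mathcal{Z}$ to legitimise the limit; these are exactly the structural features of the impulse-cost data (beyond the subadditivity and time-monotonicity of Assumption \hyperlink{AA.3.}{A.3} and the lower bound of Assumption \hyperlink{AA.4.}{A.4}) that one needs in hand for the selection to exist.
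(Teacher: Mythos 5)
Your proposal is correct and follows the same basic strategy the paper gestures at --- showing that the infimum in the definition of $\mathcal{M}V$ is attained --- but you actually carry the argument out, whereas the paper's proof consists of two sentences asserting that attainment ``follows directly'' from the lower bound on the intervention cost (Assumption \hyperlink{AA.4.}{A.4}) and the boundedness of $V$ (Proposition \ref{Proposition 2.1.5}). Your write-up correctly exposes that those two facts alone do not suffice: a function that is bounded below on an arbitrary set $\mathcal{Z}$ need not attain its infimum, so one genuinely needs the additional structure you invoke --- compactness of $\mathcal{Z}$ (or coercivity of $c(t,\cdot)$ to confine the minimising sequence), closedness of $\mathcal{Z}$, lower semicontinuity of $c(t,\cdot)$, and continuity of $V(t,\cdot)$ (available for $V=V^{\pm}$ via Proposition \ref{Proposition 2.1.4.}). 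None of these is stated among the paper's standing assumptions, so your final paragraph identifying them as the real content of the selection argument is the valuable part of the exercise; with those hypotheses made explicit, your direct-method proof is complete and is strictly more rigorous than the paper's.
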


The proof of the lemma is straightforward since we need only prove that the  infimum is in fact a minimum. This follows directly from the fact that the cost function is minimally bounded (c.f.  \hyperlink{AA.4.}{A.4.}) and that the value functions are also bounded by Proposition \ref{Proposition 2.1.5}.

A proof of the following lemma is reported in \cite{zhang2011stochastic}, Lemma 3.6 and similar result may be found in (Lemma 3.10 in \cite{chen2013impulse}):\

We give the following result without demonstration:
\

\begin{lemma}\label{Lemma 2.1.7.}
The non-local intervention operator $\mathcal{M}$ is continuous wherein we can deduce the existence of a constants $c_1,c_2>0$ s.th. when $s<s'$ with: 
\begin{enumerate}
    \item $\left|\mathcal{M}V^{\pm} (s,x)-\mathcal{M}V^{\pm} (s,y)\right|\leq c_1 |x-y|$,
\item  $\left|\mathcal{M}V^{\pm} (t,x)-\mathcal{M}V^{\pm} (s,x)]\right|\leq c_2 |t-s|^{\frac{1}{2}},\qquad \forall  (t,x),(s,y) \in [0,T]\times S 	$.
\end{enumerate}
\end{lemma}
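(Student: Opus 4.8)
The plan is to transfer the regularity of $V^{\pm}$ established in Proposition \ref{Proposition 2.1.4.} through the infimum defining $\mathcal{M}$, using near-optimal (or, by Lemma \ref{Lemma 2.1.6.}, exactly optimal) selections $z^{\star}\in\mathcal{Z}$. Fix $(s,x),(t,y)\in[0,T]\times S$ and take $s\le t$ in the temporal part. The mechanism throughout is the same: pick $z^{\star}$ optimal for $\mathcal{M}V^{\pm}$ at one of the two time/state pairs, plug the \emph{same} $z^{\star}$ into the other pair (where it is then suboptimal), and observe that the intervention-cost contributions $c(\cdot,z^{\star})$ either cancel exactly (when the time argument is held fixed) or are controlled by Assumption A.3 (ii) (when the time argument varies).

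For (i) I would let $z^{\star}=z^{\star}(s,y)$ attain $\mathcal{M}V^{\pm}(s,y)$ and compare with $\mathcal{M}V^{\pm}(s,x)$, obtaining
\[
\mathcal{M}V^{\pm}(s,x)-\mathcal{M}V^{\pm}(s,y)\le V^{\pm}\!\bigl(s,\Gamma(x,z^{\star})\bigr)-V^{\pm}\!\bigl(s,\Gamma(y,z^{\star})\bigr),
\]
the terms $c(s,z^{\star})$ having cancelled. Proposition \ref{Proposition 2.1.4.} (i) together with the Lipschitz dependence of $\Gamma$ on its first argument (an equality for the additive response $\Gamma(w,z)=w+z$ used in the model) then bounds the right-hand side by a constant multiple of $|x-y|$, and exchanging $x$ and $y$ gives the two-sided estimate (i).

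For (ii), selecting $z^{\star}=z^{\star}(s,x)$ optimal at $(s,x)$ yields
\[
\mathcal{M}V^{\pm}(t,x)-\mathcal{M}V^{\pm}(s,x)\le V^{\pm}\!\bigl(t,\Gamma(x,z^{\star})\bigr)-V^{\pm}\!\bigl(s,\Gamma(x,z^{\star})\bigr)+\bigl(c(t,z^{\star})-c(s,z^{\star})\bigr),
\]
where the cost difference is $\le 0$ by Assumption A.3 (ii) and the $V^{\pm}$-difference is $\le c_{2}|t-s|^{1/2}$ by Proposition \ref{Proposition 2.1.4.} (ii); this gives one side of (ii). For the opposite one-sided bound I would instead take $z^{\star}=z^{\star}(t,x)$ optimal at $(t,x)$, which leaves a residual cost difference $c(s,z^{\star})-c(t,z^{\star})$ that is now \emph{non-negative} and cannot be discarded by monotonicity. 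Combining the two one-sided estimates — and letting $\epsilon\downarrow 0$ if near-minimisers are used in place of Lemma \ref{Lemma 2.1.6.} — produces (ii).

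I expect this last residual term to be the only genuine obstacle: Assumption A.3 (ii) alone delivers only $\mathcal{M}V^{\pm}(t,\cdot)-\mathcal{M}V^{\pm}(s,\cdot)\le c_{2}|t-s|^{1/2}$, and closing the gap in the other direction requires a quantitative modulus of continuity of $c$ in the time variable, uniform over $\xi\in\mathcal{Z}$, of the form $|c(t,\xi)-c(s,\xi)|\le c|t-s|^{1/2}$. Such a bound holds for the discounted costs $c(t,\xi)=e^{-\delta t}\bar{c}(\xi)$ with $\bar{c}$ bounded of the type motivating Assumption A.3, and is in force in \cite{zhang2011stochastic,chen2013impulse} from which this lemma is quoted; once it is granted, the remainder is the routine mechanical transfer of Proposition \ref{Proposition 2.1.4.} through the $\inf$ over $\mathcal{Z}$, with Lemma \ref{Lemma 2.1.6.} guaranteeing that the selections $z^{\star}$ exist.
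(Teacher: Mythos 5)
The paper offers no proof of this lemma: it is stated explicitly without demonstration and attributed to Lemma 3.6 of \cite{zhang2011stochastic} (see also Lemma 3.10 of \cite{chen2013impulse}), so there is no in-paper argument to compare yours against. Your proposal is the standard one and is, in substance, exactly what those references do: evaluate the infimum defining $\mathcal{M}$ at a minimiser (which exists by Lemma \ref{Lemma 2.1.6.}) for one of the two time/state pairs, insert the same intervention suboptimally at the other pair, and transfer the regularity of $V^{\pm}$ from Proposition \ref{Proposition 2.1.4.} through the resulting one-sided inequalities. Part (i) and the one-sided bound $\mathcal{M}V^{\pm}(t,x)-\mathcal{M}V^{\pm}(s,x)\le c_2|t-s|^{1/2}$ for $s<t$ go through exactly as you describe, granting (as you note) that $\Gamma(\cdot,z)$ is Lipschitz in its first argument uniformly in $z$ --- an identity for the additive response $\Gamma(x,z)=x+z$ implicitly used in Lemma \ref{Lemma 2.1.6.}.

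Your diagnosis of the remaining direction is also correct, and it is the one substantive point worth recording. Assumption A.3~(ii) only says that $c$ is nonincreasing in time, which kills the residual cost term in one direction but leaves $c(s,z^{\star})-c(t,z^{\star})\ge 0$ uncontrolled in the other; the boundedness of $V^{\pm}$ and the lower bound $c\ge\lambda_c$ do not repair this. The two-sided temporal estimate in part (ii) therefore does not follow from the standing assumptions A.1--A.4 as stated: one needs in addition a modulus of continuity for $c$ in time that is uniform over $\mathcal{Z}$, say $\sup_{\xi\in\mathcal{Z}}|c(t,\xi)-c(s,\xi)|\le C|t-s|^{1/2}$. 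Such a hypothesis is in force in the cited references, and it is satisfied by the discounted costs $c(t,\xi)=e^{-\delta t}\bar{c}(\xi)$ with $\bar{c}$ bounded that motivate A.3~(ii), but it is not among this paper's assumptions; the stray ``when $s<s'$'' clause in the statement suggests the author may in fact only have the one-sided estimate in mind. Once that extra hypothesis is granted, your argument is complete and nothing further is missing.
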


\section{A Viscosity Theoretic Approach} \hypertarget{V}{}
 We now approach problem (\ref{viscp1obstacle}) using a viscosity theory approach. We shall firstly start by proving the existence of a value of the game and that the value is a unique viscosity solution to the HJBI equation, some ideas for the proofs in the section come from \cite{cosso2013stochastic}, \cite{cardaliaguet2010introduction,grun2013dynkin}. \\ 
The outline of the scheme is as follows:	 \renewcommand{\theenumi}{\roman{enumi}}
 \begin{enumerate}[leftmargin= 6.5 mm]
\item Using the DPP for each of the value functions, prove that the upper (resp., lower) value function is a viscosity subsolution (resp., supersolution) to the HJBI equation (\ref{viscp1obstacle}).
\item Prove a comparison theorem and the reverse inequality of (\ref{viscvaluefunctioninequalitystandard}) therefore proving equality of the value functions.
\item Using (ii), deduce the existence of a value of the game and that the value is a unique solution to the HJBI equation.\end{enumerate}

Let us therefore now introduce some key definitions relating to viscosity solutions:\

\begin{definition}\label{Definition 2.6.1}
A locally bounded lower (resp., upper) semicontinuous function $\psi:[0,T]\times S\to \mathbb{R}$ is a viscosity supersolution (resp., subsolution) to the HJBI equation (\ref{viscp1obstacle}) if:

For any $ (s,x) \in [0,T]\times S$ and $\psi \in \mathcal{C}^{1,2} ([0,T];S) $ s.th. $(s,x) $ is a local minimum (resp., maximum) of $V-\psi$, we have that: 
\begin{align}
\hspace{-1 mm}
\max\hspace{-0.2 mm}\left\{\min\left[-\frac{\partial \psi}{\partial s} (s,x)-\left(\mathcal{L}\psi(s,x)+f(s,x)\right),\psi(s,x)-G(s,x)\right]\hspace{-.5 mm},\psi(s,x)-\mathcal{M}\psi(s,x)\right\}\hspace{-.5 mm}\geq 0&\nonumber\\\label{viscsolutiondefn}
\text{  (resp.,}\leq 0\text{)}.& 
\end{align} 
A locally bounded lower (resp., upper) semicontinuous function $\psi:[0,T]\times S\to \mathbb{R}$ is a viscosity solution to the HJBI equation (\ref{viscp1obstacle}) if it is both a subsolution and supersolution of (\ref{viscsolutiondefn}).
\end{definition}

\begin{remark} If the value functions are known a priori to be continuous (or deterministic or in the discrete case) derivation of the DPP is straightforward. Otherwise, in general, we must use one of two arguments: a measurable selection argument or establish the regularity of the value functions then construct a measurable selection i.e. partition the state space then construct a measurable selection (this uses the Lindel\"{o}f property of the canonical space).
\end{remark}
We now state the dynamic programming principle for the game:
\begin{theorem}[Dynamic programming principle for stochastic differential games of control and stopping with Impulse Controls]\label{Dynamic_programming_principle_for_stochastic_differential_games_of_control_and_stopping_with_Impulse_Controls} 
Let $u \in \mathcal{U}$ be an admissible player I control and suppose $\rho \in \mathcal{T}$ is an $\mathcal{F}-$measurable stopping time, then for a sufficiently small $h$ the following variants of the DPP hold for the functions $V^+$ and $V^-$:
\begin{align}\nonumber
&V^- (t_0,x_0)
\\&\begin{aligned}= \inf_{\alpha \in \mathcal{A}_{(0,T)}}   \sup_{\rho \in \mathcal{T}}  \mathbb{E}\Bigg[\int_{t_0}^{(t_0+h)\wedge\rho} f(s,X_s^{t_0,x_0,\alpha(\rho)} )ds+\sum_{j\geq  1} c(\tau_j (\rho),\xi_j (\rho)) \cdot 1_{\{\tau_j (\rho)\leq (t_0+h)\wedge\rho\}}&
\\\nonumber\quad+G(\rho\wedge\tau_S,X_{\rho\wedge\tau_S}^{t_0,x_0,\alpha(\rho)} ) \cdot 1_{\{\rho\wedge\tau_S\leq t_0+h\}} +V^- ((t_0+h)\wedge\rho,X_{t_0+h}^{t_0,x_0,\alpha(\rho) }) \cdot 1_{\{\rho\wedge\tau_S>t_0+h\}}  \Bigg]& \label{viscDPPminus}
\end{aligned}
\end{align}
and 
\begin{align}\nonumber
&V^+(t_0,x_0)
\\&\begin{aligned}=\sup_{\beta \in \mathcal{B}_{(0,T)}}   \inf_{u \in \mathcal{U}}  \mathbb{E}\Bigg[\int_{t_0}^{(t_0+h)\wedge\beta(u)} f(s,X_s^{t_0,x_0,u} )ds+\sum_{j\geq  1} c(\tau_j,\xi_j ) \cdot 1_{\{\tau_j\leq (t_0+h)\wedge\beta(u)\}}&
\\\begin{aligned}&+G(\beta(u)\wedge\tau_S,X_{\beta(u)\wedge\tau_S}^{t_0,x_0,u} ) \cdot 1_{\{\beta(u)\wedge \tau_S\leq t_0+h\}}
\\&\begin{aligned}+V^- ((t_0+h)\wedge\beta(u),X_{t_0+h}^{t_0,x_0,u} ) \cdot 1_{\{\beta(u)\wedge\tau_S> t_0+h\}}  \Bigg]. \nonumber
\\\label{viscDPPplus}\forall (t_0,x_0 ) \in [0,T]\times S.&
\end{aligned}
\end{aligned}
\end{aligned}
\end{align}
\end{theorem}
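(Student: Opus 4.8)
The plan is to establish the dynamic programming principle for the upper value function $V^+$, with the argument for $V^-$ being symmetric. First I would set up the usual two-sided scheme: denote by $W(t_0,x_0)$ the right-hand side of \eqref{viscDPPplus} and show $V^+ \le W$ and $V^+ \ge W$ separately. For the inequality $V^+ \ge W$, fix $\beta \in \mathcal{B}_{(0,T)}$; given $\varepsilon > 0$, for each state $(t_0+h, y)$ reachable at the intermediate time choose an $\varepsilon$-optimal strategy $\beta_y$ for player II in the subgame starting at $(t_0+h,y)$. Using the regularity of $V^+$ (Proposition~\ref{Proposition 2.1.4}) and the boundedness (Proposition~\ref{Proposition 2.1.5}), together with the Lindel\"of property of the canonical path space alluded to in the Remark preceding the theorem, I would patch these into a single measurable family $\{\beta_y\}$ and concatenate: player II plays $\beta$ on $[t_0,(t_0+h)\wedge\rho)$ and switches to $\beta_{X_{t_0+h}}$ afterwards, stopping immediately if the exit time or player II's own stopping decision has already occurred before $t_0+h$. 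Plugging this concatenated strategy into the definition of $V^+$ and using the tower property of conditional expectation together with the flow property of the controlled SDE \eqref{controlledstateprocessch1player} under restrictions of controls (Definition~\ref{Definition 1.2.5.}) gives $V^+(t_0,x_0) \ge W(t_0,x_0) - C\varepsilon$; letting $\varepsilon \downarrow 0$ yields one inequality.

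For the reverse inequality $V^+ \le W$, I would argue from the opposite direction: fix $\varepsilon>0$ and pick a near-optimal $\beta^\varepsilon$ for $V^+(t_0,x_0)$, i.e.\ $\inf_{u}J[t_0,x_0;u,\beta^\varepsilon(u)] \ge V^+(t_0,x_0)-\varepsilon$. For an arbitrary control $u \in \mathcal{U}$ and a nearly-optimal continuation control chosen after $t_0+h$ (again via a measurable selection built from the $\varepsilon$-optimality and regularity of $V^+$), one splices $u$ on $[t_0,(t_0+h)\wedge\beta^\varepsilon(u))$ with the continuation control, applies the tower property and non-anticipativity of $\beta^\varepsilon$ (Definition~\ref{Definition 1.1.1.}) to identify the restricted strategy as an admissible strategy in the subgame, and obtains that the bracketed expression in $W$ dominates $V^+(t_0,x_0)-C\varepsilon$ up to the running-cost and intervention-cost contributions on $[t_0,t_0+h]$, which are controlled by \eqref{viscfbound}-type estimates and assumption \hyperlink{AA.4.}{A.4.}. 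Taking the infimum over $u$, then the supremum over $\beta$, and finally $\varepsilon\downarrow 0$ closes the loop. Here Lemma~\ref{Lemma 2.1.3.} is used to ensure we may restrict attention to controls with no impulse exactly at the intermediate splice time $t_0+h$, so that the concatenation does not create spurious simultaneous impulses.

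The main obstacle, as the preceding Remark signals, is the measurable selection step: because $V^+$ is not known a priori to be more than continuous, one cannot simply invoke a jointly measurable family of $\varepsilon$-optimal strategies off the shelf. The fix is the standard one for stochastic differential games (as in \cite{fleming1989existence} and \cite{cosso2013stochastic}): use the spatial Lipschitz and temporal $\tfrac12$-H\"older regularity from Proposition~\ref{Proposition 2.1.4} to cover $S$ by countably many small balls on each of which a single $\varepsilon$-optimal strategy works uniformly (up to an $O(\varepsilon)$ error absorbed into the constant), then glue using a measurable partition subordinate to this cover; the Lindel\"of property of $\mathcal{C}([0,T];\mathbb{R}^p)$ guarantees the countability. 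A secondary technical point is verifying that the concatenated objects are genuinely non-anticipative strategies in the sense of Definition~\ref{Definition 1.1.1.} and that the restriction operations of Definition~\ref{Definition 1.2.5.} interact correctly with the stopping time $\beta(u)\wedge(t_0+h)$; this is routine but must be checked on the event $\{\beta(u)\wedge\tau_S \le t_0+h\}$ versus its complement, which is exactly why the indicator decomposition appears in \eqref{viscDPPplus}. Once these pieces are in place, the remaining estimates — boundedness of $\mathbb{E}\int_{t_0}^{t_0+h}f\,ds$, continuity of $G$, and finiteness on average of the number of impulses — are immediate from the standing assumptions and Lemma~\hyperlink{Lemma A.1.3}{A.1.3}.
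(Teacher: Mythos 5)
Your proposal follows essentially the same route as the paper: both directions are obtained by concatenating an $\epsilon$-optimal strategy on $[t_0,t_0+h)$ with cell-wise $\epsilon$-optimal continuation strategies glued over a countable partition of the state space (justified by the Lipschitz/H\"older regularity of Proposition~\ref{Proposition 2.1.4.} and the $2\epsilon$-optimality-on-a-ball result, Lemma~A.1), with Lemma~\ref{Lemma 2.1.3.} handling impulses at the splice time and the flow/tower properties closing the estimate. The only cosmetic difference is that the paper writes out the $\geq$ direction for $V^+$ and the $\geq$ direction for $V^-$ and declares the remaining inequalities analogous, whereas you organise both directions around $V^+$; the substance is identical.
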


Intuitively, the DPP states that if we compute the optimal controls on the intervals $[t_0,t_0+h]$ and $[t_0+h,t']$ for some $h<(t'\wedge\rho)-t_0$ for any $t'\in [0,T]$, then we would obtain the same result as that which we would obtain if we computed the optimal controls for the interval $[t_0,t']$ as a whole.

A classical consequence of the DPP (\ref{viscDPPminus}) and (\ref{viscDPPplus}) is that we find that the function $V^-$ (resp., $V^+$) is the subsolution (resp., supersolution) to an associated HJBI equation, namely (\ref{viscp1obstacle}). Moreover, if the game admits a value $V$ with $V \in \mathcal{C}^{1,2} ([0,T],\mathbb{R}^p )$, then the $V$ is a classical solution to an associated HJBI equation.
\begin{proof}

We begin by proving: 
\begin{align*}\nonumber 
&V^+ (t_0,x_0 )
\\&\begin{aligned}\nonumber\geq \sup_{\beta \in \mathcal{B}_{(0,T)} }   \inf_{u \in \mathcal{U}}  \mathbb{E}\Bigg[\int_{t_0}^{(t_0+h)\wedge\beta(u)\wedge\tau_S} f(s,X_s^{t_0,x_0,u} )ds+\sum_{j\geq  1} c(\tau_j,\xi_j ) \cdot 1_{\{\tau_j<t_0+h\wedge \beta(u)\wedge\tau_S \}}& 
\\\begin{aligned}&+G\left(\beta(u)\wedge\tau_S,X_{\beta(u)\wedge\tau_S}^{t_0,x_0,u}\right) \cdot 1_{\{\beta(u)\wedge\tau_S\leq t_0+h\}}  \Bigg]
\\&+V^+ ((t_0+h)\wedge\beta(u),X_{t_0+h}^{t_0,x_0,u} ) \cdot 1_{\{\beta(u)\wedge\tau_S> t_0+h\}},
\end{aligned}
\end{aligned}
\end{align*}
for some $\infty>h>0$.

Having established the uniform continuity of the functions $V^-$ and $V^+$, a countable selection argument is sufficient in order to derive the DPP, therefore avoiding measurable selection arguments directly. Indeed, using Proposition \ref{Proposition 2.1.4.}, we can find a set of controls that produce values of $J$ that are arbitrarily close to the values of $V^-$ and $V^+$ at some given point.

Hence, let $ (A^j )^{j \in \mathbb{N}}$ be a partition of $\mathbb{R}^p$. Let $\hat{\mu} \in \mathcal{B}_{(0,T)} $ be some $\epsilon -$optimal strategy against $\underset{\beta(u) \in \mathcal{B}_{(0,T)}}{\sup}   \underset{u \in \mathcal{U}}{\inf}V^+ (t_0+h\wedge\beta(u),X_{t_0+h}^{t_0,x_0,u} ) $. Note by Lemma \hyperlink{Lemma A.1}{A.1} we can deduce that since $\hat{\mu}$ is an $\epsilon -$optimal strategy against $\sup_{\beta(u) \in \mathcal{B}_{(0,T)}}   \underset{u \in \mathcal{U}}{\inf}V^+ (t_0+h\wedge\beta(u),X_{t_0+h}^{t_0,x_0,u} )$ then there exists some $2\epsilon -$optimal strategy $\hat{\mu}^x \in \mathcal{B}_{(0,T)} $ against $\sup_{\beta(u) \in \mathcal{B}_{(0,T)}}   \inf_{u \in \mathcal{U}}  V^+ (t_0+h\wedge\beta(u),x) $ s.th.\\ $\underset{\beta(u) \in \mathcal{B}_{(0,T)}}{\sup}   \underset{u \in \mathcal{U}}{\inf}  V^+ (t_0+h\wedge\beta(u),x)-(\epsilon +\delta )\leq  \underset{u \in \mathcal{U}}{\inf}  J[t_0+h,x;u_{[t_0+h,T ]} ,\hat{\mu}^x (u_{[t_0+h,T ]}  )] $ where $\epsilon >\delta >0$. Hence, we deduce that the strategy $\hat{\mu}^x$ is a $2\epsilon -$optimal strategy  $\inf_{u \in \mathcal{U}} V^+ (t_0+h\wedge\beta(u),y)$ for all $y \in B_\delta  (x) $ within some radius $0<\delta <\epsilon$.

Let us therefore construct the strategy $\mu$ by: 
\begin{equation}
\hat{\mu}(u)(s)=
\begin{cases}
\begin{aligned}
&\mu(u)(s),  & &s \in [t_0,t_0+h[ \\
  &\hat{\mu}^x(u_{[t_0+h,T ]})  (s)),& &s \in [t_0+h,T], X_{t_0+h}^{t_0,x_0,u} \in B_\delta  (x)
  \end{aligned}
  \end{cases}
  \end{equation}
Now for any $ (t_0,x_0 ) \in [0,T]\times S,u \in \mathcal{U},\mu \in \mathcal{B}_{(0,T)} $ and $\forall  u_{[t_0+h,T ]}  \in \mathcal{U}_{(t_0+h,T )} $ using Lemma \ref{Lemma 2.1.3.}, for some sufficiently small $h>0$, we have: 
\begin{align}\nonumber
&\begin{aligned}\quad\mathbb{E}\Bigg[\int_{t_0}^{\hat{\mu}(u)\wedge\tau_S} f(s,X_s^{t_0,x_0,u} )ds&+\sum_{j\geq  1} c(\tau'_j,\xi'_j ) \cdot 1_{\{\tau'_j\leq \hat{\mu}(u)\wedge\tau_S\}}  \\&
+G\left(\hat{\mu}(u)\wedge\tau_S,X_{\hat{\mu}(u)\wedge\tau_S}^{t_0,x_0,u}\right)\cdot 1_{\{\hat{\mu}(u)\wedge\tau_S<\infty\}} \Bigg] 	
 \nonumber\end{aligned}\\&
 \nonumber\begin{aligned}
 \geq \mathbb{E}\Bigg[\int_{t_0}^{(t_0+h)\wedge\mu(u)\wedge\tau_S} f(s,X_s^{t_0,x_0,u} )ds&+\sum_{j\geq  1} c(\tau_j,\xi_j ) \cdot 1_{\{\tau_j\leq (t_0+h)\wedge\mu(u)\wedge\tau_S\}}  
 \\&
 +G\left(\mu(u)\wedge\tau_S,X_{\mu(u)\wedge\tau_S}^{t_0,x_0,u}\right)  \cdot 1_{\{\mu(u)\wedge\tau_S<t_0+h\}}  \Bigg]
 \end{aligned}
 \\&\;\;
 \begin{aligned}+\mathbb{E}\Bigg[\int_{t_0+h}^{\hat{\mu}(u)\wedge\tau_S} &f(s,X_s^{t_0,x_0,u} )ds+\sum_{j\geq  1} c(\tau'_j,\xi'_j ) \cdot 1_{\{\hat{\mu}(u)\wedge\tau_S\geq \tau'_j> t_0+h\}} 
 \\&
 +G\left(\hat{\mu}(u)\wedge \tau_S,X_{\hat{\mu}(u)\wedge \tau_S}^{t_0,x_0,u}\right)\cdot 1_{\{\hat{\mu}(u)\wedge\tau_S<\infty\}} \Bigg] \cdot 1_{\{\hat{\mu}(u)\wedge\tau_S> t_0+h\}} -\epsilon,
 \label{viscproof32122ndline} 
\end{aligned}
\end{align}
for some arbitrary $\epsilon >0$. Using the properties of $X$, we can further rewrite (\ref{viscproof32122ndline}) as: 
\begin{align}&\nonumber\begin{aligned}
\mathbb{E}\Bigg[\int_{t_0}^{(t_0+h)\wedge\mu(u)\wedge\tau_S}f(s,X_s^{t_0,x_0,u} )ds&+\sum_{j\geq  1} c(\tau_j,\xi_j ) \cdot 1_{\{\tau_j\leq (t_0+h)\wedge\mu(u)\wedge\tau_S\}}
\\&+G\left(\mu(u)\wedge\tau_S,X_{\mu(u)\wedge\tau_S}^{t_0,x_0,u}\right)  \cdot 1_{\{\mu(u)\wedge\tau_S\leq t_0+h\}}\Bigg]
\end{aligned}
\\&
\begin{aligned}
+\mathbb{E}\Bigg[&\int_{t_0+h}^{\hat{\mu}(u_{[t_0+h,\tau_S ])}\wedge\tau_S}f(s,X_s^{t_0+h, X_{t_0+h}^{t_0,x_0,u},u_{[t_0+h,\tau_S ]} } )ds
\\&
+\sum_{j\geq  1} c(\tau'_j,\xi'_j ) \cdot 1_{\{\hat{\mu}(u_{[t_0+h,\tau_S ]} )\wedge\tau_S\geq \tau'_j> t_0+h}\} 
\\&\begin{aligned}\quad+ G\left(\hat{\mu}(u_{[t_0+h,\tau_S ]} )\wedge\tau_S,X_{\hat{\mu}(u_{[t_0+h,\tau_S ]} )\wedge\tau_S}^{t_0+h, X_{t_0+h}^{t_0,x_0,u} ,u_{[t_0+h,\tau_S ]}  } \right)\cdot& 1_{\{\hat{\mu}(u_{[t_0+h,\tau_S ]} )\wedge\tau_S<\infty\}} 
\\
\Bigg] \cdot& 1_{\{\hat{\mu}(u_{[t_0+h,\tau_S ]} )\wedge\tau_S\geq t_0+h}\} -\epsilon. 	\label{DPPproofp1step1}
\end{aligned}
\end{aligned}
\end{align}
We now exploit the regularity of $V$ (Proposition \ref{Proposition 2.1.4.}) and the $\epsilon-$optimality of $\mu$ by which we deduce that: 
\begin{align}&
\begin{aligned}
\mathbb{E}&\Bigg[\int_{t_0+h}^{\hat{\mu}(u_{[t_0+h,\tau_S ]})\wedge\tau_S} f(s,X_s^{t_0+h, X_{t_0+h}^{t_0,x_0,u},u_{[t_0+h,\tau_S ]}} )ds
\\&+\sum_{j\geq  1} c(\tau'_j,\xi'_j ) \cdot 1_{\{\hat{\mu}(u_{[t_0+h,\tau_S ]} )\wedge\tau_S\geq \tau'_j> t_0+h\}}  
\\&
+G\left(\hat{\mu}(u_{[t_0+h,\tau_S ]} )\wedge\tau_S,X_{\hat{\mu}(u_{[t_0+h,\tau_S ]} )\wedge\tau_S}^{t_0+h, X_{t_0+h} ^{t_0,x_0,u} ,u_{[t_0+h,\tau_S ]}  } \right)\cdot 1_{\{\hat{\mu}(u_{[t_0+h,\tau_S ]} )\wedge\tau_S\geq t_0+h}\} \Bigg]-\epsilon 	
\nonumber\end{aligned}
\\&\begin{aligned}
\geq \sum_{j \in \mathbb{N}} \mathbb{E}\Bigg[&\int_{t_0+h}^{\hat{\mu}(u)\wedge\tau_S}f(s,X_s^{t_0+h,y^j,u_{[t_0+h,\tau_S ]} } )ds+\sum_{j\geq  1} c(\tau'_j,\xi'_j ) \cdot 1_{\{\hat{\mu}  (u)\wedge\tau_S\geq \tau'_j\geq t_0+h\}} 
\\&
+G\left(\hat{\mu}(u)\wedge\tau_S,X_{\hat{\mu}(u)\wedge\tau_S}^{t_0+h,y^j,u_{[t_0+h,\tau_S ]}  } \right)\cdot 1_{\{\hat{\mu}(u_{[t_0+h,\tau_S ]} )\wedge\tau_S\geq t_0+h}\} \Bigg]-c\delta  -\epsilon    	\nonumber\end{aligned}
\\&\qquad\qquad\qquad\qquad\qquad\geq V^+ (t_0+h\wedge\hat{\mu}(u),X_{t_0+h}^{t_0,x_0,u} )-2\epsilon -c\delta ,	\label{DPPproofp1epsilondelta}
\end{align}
using the $\epsilon -$optimality of the strategy $\hat{\mu}$ against \\$\sup_{\beta \in \mathcal{B}_{(0,T)}}    \inf_{u \in \mathcal{U}}V^+ (t_0+h\wedge\beta(u),X_{t_0+h}^{t_0,x_0,u} )$.
Hence, putting (\ref{DPPproofp1epsilondelta}) together with (\ref{DPPproofp1step1}), we find that: 
\begin{align*}
&\begin{aligned} 
 \mathbb{E}\Bigg[\int_{t_0}^{(t_0+h)\wedge\mu(u)\wedge\tau_S} f(s,X_s^{t_0,x_0,u} )ds&+\sum_{j\geq  1} c(\tau_j,\xi_j ) \cdot 1_{\{\tau_j\leq (t_0+h)\wedge\mu(u)\}} \\&
+G(\mu(u)\wedge\tau_S,X_{\mu(u)}^{t_0,x_0,u} )  \cdot 1_{\{\mu(u)\wedge\tau_S\leq t_0+h\}}  ]
\end{aligned}
\\&\begin{aligned}+\mathbb{E}\Bigg[\int_{t_0+h}^{\mu(u)\wedge\tau_S} f(s,X_s^{t_0,x_0,u} )ds&+\sum_{j\geq  1} c(\tau_j,\xi_j ) \cdot 1_{\{\mu(u)\wedge\tau_S\geq \tau_j> t_0+h\}}  \\&+G(\mu(u)\wedge\tau_S,X_{\mu(u)\wedge\tau_S}^{t_0,x_0,u} )\cdot 1_{\{\mu(u)\wedge\tau_S<\infty\}} \Bigg] \cdot 1_{\{\mu(u)\geq t_0+h\}}
\end{aligned}
\\& \begin{aligned}
\geq \mathbb{E}\Bigg[\int_{t_0}^{(t_0+h)\wedge\mu(u)\wedge\tau_S} f(s,X_s^{t_0,x_0,u} )ds+\sum_{j\geq  1} c(\tau_j,\xi_j ) \cdot 1_{\{\tau_j\leq (t_0+h)\wedge\mu(u)\wedge\tau_S\}} &
\\ +G(\mu(u)\wedge\tau_S,X_{\mu(u)\wedge\tau_S}^{t_0,x_0,u} ) \cdot 1_{\{\mu(u)\wedge\tau_S\leq t_0+h\}}  &
\\+V^+ (t_0+h\wedge\hat{\mu}(u),X_{t_0+h}^{t_0,x_0 ,u} ) \cdot 1_{\{\hat{\mu}(u)\wedge\tau_S> t_0+h\}}  \Bigg]-c\delta -2\epsilon, &
\end{aligned} 
\end{align*}
from which after successively applying the  $\inf$ and $\sup$ operators we deduce the first result since $\delta$  and $\epsilon$  can be chosen arbitrarily.

We prove the reverse inequality in an analogous manner, in particular, we now prove inequality for the function $V^-$. Indeed, by Proposition \ref{Proposition 2.1.4.}, we can deduce the existence of a strategy $\alpha^{(1,\epsilon ) } \in \mathcal{A}_{(0,T)} $ against $V^- (t,x) $ s.th. 
\begin{align}
&\begin{aligned}
\inf_{\alpha \in \mathcal{A}_{(0,T)}}  \sup_{\rho \in \mathcal{T}} \mathbb{E}&\Bigg[\int_{t_0}^{(t_0+h)\wedge\rho\wedge\tau_S} f(s,X_s^{t_0,x_0,\alpha(\rho)} )ds+\sum_{j\geq  1} c(\tau_j,\xi_j ) \cdot 1_{\{\tau_j\leq (t_0+h)\wedge\rho\wedge\tau_S\}}  \\&\begin{aligned}+G(\rho\wedge\tau_S,X_{\rho\wedge\tau_S}^{t_0,x_0,\alpha(\rho)} ) \cdot 1_{\{\rho\wedge\tau_S\leq t_0+h\}}+V^- (t_0+h,X_{t_0+h}^{t_0,x_0,\alpha(\rho)} )&
\\\cdot 1_{\{\rho> t_0+h\}}  \Bigg]&
\end{aligned} 	
\end{aligned} 	
\nonumber\\&
\begin{aligned}
\geq &\mathbb{E}\Bigg[\int_{t_0}^{(t_0+h)\wedge\rho\wedge\tau_S} f(s,X_s^{t_0,x_0,\alpha^{(1,\epsilon ) } (\rho)} )ds+\sum_{j\geq  1} c(\tau_j^{1,\epsilon },\xi_j^{1,\epsilon } ) \cdot 1_{\{\tau_j^{1,\epsilon }\leq (t_0+h)\wedge\rho\wedge\tau_S\}}  \\&\begin{aligned}\;+G(\rho\wedge\tau_S,X_{\rho\wedge\tau_S}^{t_0,x_0,\alpha^{(1,\epsilon ) } (\rho) ) } \cdot 1_{\{\rho\wedge\tau_S\leq t_0+h\}}  +V^- (t_0+h,X_{t_0+h}^{t_0,x_0,\alpha^{(1,\epsilon ) } (\rho) } \cdot 1_{\{\rho\wedge\tau_S> t_0+h\}}  \Bigg]&
\\-\epsilon& , 	\label{DPPproofp2step1}
\end{aligned}
\end{aligned}
\end{align}
where we have used the shorthand $\tau_j (\rho)\equiv \tau_j$  and $\xi_j (\rho)\equiv \xi_j$, $\forall  j \in \mathbb{N}$. 

We now build the strategy $\alpha$ by: \begin{equation}
\alpha(\rho)(s)=
\begin{cases}
\begin{aligned}
 &\alpha^{(1,\epsilon ) } (\rho)(s),&  &s \in [t_0,t_0+h[ \\&
  \alpha^{A_i } (\rho' )(s),& &s \in [t_0+h,T ],X_{t_0+h}^{t_0,x_0,\alpha^{(1,\epsilon ) }} \in A_i,
  \end{aligned}
  \end{cases}\nonumber \end{equation}
where we have used $\rho'$ to denote the player II stopping time s.th. $\rho' \in \mathcal{T}_{(t_0+h,T)}\subset  [t_0+h,T ] $.
Let $\alpha^{(2,\epsilon )} \in \mathcal{A}_{(0,T)} $ be an $\epsilon -$optimal strategy against $\sup_{\rho \in \mathcal{T}}V^- ((t_0+h)\wedge\rho,x) $. Using Lemma \ref{Lemma 2.1.3.} and by similar reasoning as in part (i), we can also deduce the existence of a strategy $\alpha^{A_i } \in \mathcal{A}_{(t_0+h,T)} $ s.th. $\forall  y \in A_i,\rho' \in \mathcal{T}_{(t_0+h,T)} $ and some $\epsilon >0$, the following inequality holds: \begin{equation}
V^- ((t_0+h)\wedge\rho,y)\geq  J[(t_0+h)\wedge\rho,y;\alpha^{\mathcal{A}_i } (\rho' )]-\epsilon . 	\end{equation}
We therefore observe that: 
\begin{align}
&\mathbb{E}\left[V^-\left(t_0+h,X_{t_0+h}^{t_0,x_0,\alpha^{(1,\epsilon ) } (\rho')} \right)\right]\nonumber
\\&=\mathbb{E}\left[\sum_{i\geq 1} V^- \left(t_0+h,X_{t_0+h}^{t_0,x_0,\alpha^{(1,\epsilon ) } (\rho')} \right) \cdot 1_{\{X_{t_0+h}^{t_0,x_0,\alpha^{(1,\epsilon ) } (\rho'))} \in \mathcal{A}_i \}}  \right] 	
\nonumber
\\&\geq \mathbb{E}\left[\sum_{i\geq 1} J\left[t_0+h,X_{t_0+h}^{t_0,x_0,\alpha^{(1,\epsilon )} (\rho')};\alpha^{\mathcal{A}_i } (\rho' ),\rho'\right] \cdot 1_{\{X_{t_0+h}^{t_0,x_0,\alpha^{(1,\epsilon ) } (\rho' )} \in \mathcal{A}_i \}}  \right]-\epsilon  	
\nonumber
\\&= J\left[t_0+h,X_{t_0+h}^{t_0,x_0,\alpha^{(1,\epsilon )} (\rho')};\sum_{i\geq 1} \alpha^{A_i} (\rho' ) \cdot 1_{\{X_{t_0+h}^{t_0,x_0,\alpha^{(1,\epsilon ) } (\rho' )} \in \mathcal{A}_i \}} ,\rho'\right]-\epsilon . 	\label{DPPproofp2Vopenballineq}
\end{align}
 Let us now construct the strategy $\bar{\alpha}^{(2,\epsilon ) } (\rho) \in \bar{\mathcal{A}}(t_0+h) $ defined by:\\$
\bar{\alpha}^{(2,\epsilon ) } (\rho):=\sum_{i\geq 1} \alpha^{\mathcal{A}_i } (\rho' ) \cdot 1_{\{X_{t_0+h}^{t_0,x_0,\alpha^{(1,\epsilon ) } (\rho' )}  \in \mathcal{A}_i \}} 
$.
Now, after introducing the strategy $\bar{\alpha}^{(2,\epsilon ) }$ to (\ref{DPPproofp2Vopenballineq}), we deduce that: 
\begin{align}
&\begin{aligned}
\mathbb{E}\Bigg[&\int_{t_0}^{(t_0+h)\wedge\rho\wedge\tau_S} f(s,X_s^{t_0,x_0,\alpha^{(1,\epsilon ) } (\rho)} )ds+\sum_{j\geq  1} c(\tau_j^{1,\epsilon },\xi_j^{1,\epsilon } ) \cdot 1_{\{\tau_j^{1,\epsilon }\leq (t_0+h)\wedge\rho\wedge\tau_S\}}\\&
\begin{aligned} +G\left(\rho\wedge\tau_S,X_{\rho\wedge\tau_S}^{t_0,x_0,\alpha^{(1,\epsilon ) } (\rho)}\right) \cdot 1_{\{\rho\wedge\tau_S\leq t_0+h\}}  +V^- \left(t_0+h,X_{t_0+h}^{t_0,x_0,\alpha^{(1,\epsilon ) } (\rho)} \right)&
\\
\cdot 1_{\{\rho\wedge\tau_S> t_0+h\}}  \Bigg]&-\epsilon
\end{aligned}
\end{aligned}
\nonumber
\\&\begin{aligned}\quad\geq& \mathbb{E}\Bigg[ \int_{t_0}^{(t_0+h)\wedge\rho\wedge\tau_S} f(s,X_s^{t_0,x_0,\alpha^{(1,\epsilon ) } (\rho)}  )ds+\sum_{j\geq  1} c(\tau_j^{1,\epsilon },\xi_j^{1,\epsilon } ) \cdot 1_{\{\tau_j^{1,\epsilon }\leq (t_0+h)\wedge\rho\wedge\tau_S \}}  
\\&\begin{aligned}+G\left(\rho\wedge\tau_S,X_{\rho\wedge\tau_S}^{t_0,x_0,\alpha^{(1,\epsilon ) } (\rho)}\right) \cdot 1_{\{\rho\wedge\tau_S\leq t_0+h\}}  +J\left[t_0+h,X_{t_0+h}^{t_0,x_0,\alpha^{(1,\epsilon )} (\rho')};\bar{\alpha}^{(2,\epsilon ) } (\rho' ),\rho'\right]&
\\\cdot 1_{\{\rho\wedge\tau_S> t_0+h\}}  \Bigg]-2\epsilon.& \label{DPPproofp22epsilon}	
\end{aligned}
\end{aligned}
\end{align}
 We lastly construct the strategy $\alpha^\epsilon  \in \mathcal{A}(t_0) $ which consists of the strategy $\alpha^{(1,\epsilon ) }$ which is played up to time $t_0+h$ at which point the strategy $\bar{\alpha}^{(2,\epsilon ) }$ is then played.

Hence, after putting (\ref{DPPproofp22epsilon}) and (\ref{DPPproofp2step1}) together we observe that: 
\begin{align}
&
V^- (t_0,x_0)
\\&\begin{aligned}\geq \mathbb{E}\Bigg[&\int_{t_0}^{(t_0+h)\wedge\rho\wedge\tau_S} f(s,X_s^{t_0,x_0,\alpha^{(1,\epsilon ) } (\rho)} )ds+\sum_{j\geq  1} c(\tau_j^{1,\epsilon },\xi_j^{1,\epsilon } ) \cdot 1_{\{\tau_j^{1,\epsilon }\leq (t_0+h)\wedge\rho\wedge\tau_S\}}  
\\&\begin{aligned}+G(\rho,X_\rho^{t_0,x_0,\alpha^{(1,\epsilon ) } (\rho) ) } \cdot 1_{\{\rho\wedge\tau_S\leq t_0+h\}} +V^- (t_0+h,X_{t_0+h}^{t_0,x_0,\alpha^{(1,\epsilon ) } (\rho) })&
\\\cdot 1_{\{\rho\wedge\tau_S> t_0+h\}}  \Bigg]&-\epsilon
\end{aligned}
\end{aligned}
\nonumber\\&
\begin{aligned}
\geq\mathbb{E}\Bigg[&\int_{t_0}^{(t_0+h)\wedge\rho\wedge\tau_S} f(s,X_s^{t_0,x_0,\alpha^\epsilon  (\rho)} )ds+\sum_{j\geq  1} c(\tau_j^\epsilon ,\xi_j^\epsilon  ) \cdot 1_{\{\tau_j^\epsilon \leq (t_0+h)\wedge\rho\wedge\tau_S\}}  \\&
\begin{aligned}
+G(\rho\wedge\tau_S,X_{\rho\wedge\tau_S}^{t_0,x_0,\alpha^\epsilon  (\rho) } ) \cdot 1_{\{\rho\wedge\tau_S\leq t_0+h\}} +J\left[t_0+h,X_{t_0+h}^{t_0,x_0,\alpha^\epsilon  (\rho')};\alpha^\epsilon  (\rho' ),\rho'\right]&
\\\cdot 1_{\{\rho\wedge\tau_S> t_0+h\}}  \Bigg]&-2\epsilon . 
\end{aligned}
\end{aligned}
\end{align}

Moreover, since $\epsilon$ is arbitrary, we readily deduce that:  
\begin{align*}\nonumber
&V^- (t_0,x_0 )
\\&\begin{aligned}\geq  &\inf_{\alpha \in \mathcal{A}_{(0,T)}}  \sup_{\rho \in \mathcal{T}}  \mathbb{E}\Bigg[\int_{t_0}^{(t_0+h)\wedge\rho\wedge\tau_S} f(s,X_s^{t_0,x_0,\alpha(\rho)} )ds+\sum_{j\geq  1} c(\tau_j,\xi_j ) \cdot 1_{\{\tau_j\leq (t_0+h)\wedge\rho\wedge\tau_S\}}  \\&+G(\rho\wedge\tau_S,X_{\rho\wedge\tau_S}^{t_0,x_0,\alpha(\rho) }) \cdot 1_{\{\rho\wedge\tau_S\leq t_0+h\}}  +V^- (t_0+h,X_{t_0+h}^{t_0,x_0,\alpha(\rho) }) \cdot 1_{\{\rho\wedge\tau_S> t_0+h\}}  \Bigg], 	
\end{aligned}
\end{align*}
from which we readily deduce the required result. We can prove the reverse inequality in an analogous manner for which, in conjunction with (\ref{viscDPPplus}) proves the thesis.

\end{proof}

A classical consequence of the dynamic programming principles (\ref{viscDPPminus}) and (\ref{viscDPPplus}) is that we find that the function $V^-$ (resp., $V^+$) is the subsolution (resp., supersolution) to an associated HJBI equation, namely (\ref{viscp1obstacle}). Moreover, if the game admits a value $V$, s.th. $V \in \mathcal{C}^{1,2} ([0, T][0, T],\mathbb{R}^p )$, then the $V$ is a classical solution to an associated HJBI equation. 

The following lemma characterises the conditions in which the value of the game satisfies a HJBI equation: 

\begin{lemma}\label{Lemma 2.6.2.}
The function $V^- $ is a viscosity supersolution of (\ref{viscp1obstacle}) and the $V^+$ is a viscosity subsolution of (\ref{viscp1obstacle}).
\end{lemma}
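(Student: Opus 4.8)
The plan is to read the two viscosity inequalities off the dynamic programming principles (\ref{viscDPPminus})--(\ref{viscDPPplus}) by the classical test-function-plus-It\^o argument, exploiting that the game is separable (player~I acts only through impulses, player~II only through stopping), so that a single equation governs both values. Two observations dispose of the obstacle terms. First, since player~II may stop arbitrarily soon after the initial time, Lemma~\ref{Lemma 2.1.3.} together with continuity of $G$ and of the paths gives $V^{\pm}\ge G$ pointwise; hence for a test function $\psi$ touching $V^{-}$ from below at $(s,x)$ the quantity $\psi(s,x)-G(s,x)=V^{-}(s,x)-G(s,x)$ is $\ge 0$, so the supersolution inequality at $(s,x)$ reduces to $\max\{-\partial_s\psi-\mathcal{L}\psi-f,\ \psi-\mathcal{M}\psi\}(s,x)\ge 0$. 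Second, if $\psi$ touches $V^{+}$ from above at $(s,x)$ and is modified away from $(s,x)$ so that $\psi\ge V^{+}$ everywhere (keeping $\psi\in\mathcal{C}^{1,2}$, the local maximum, and, by Proposition~\ref{Proposition 2.1.5}, boundedness), then monotonicity of $\mathcal{M}$ and Remark~\ref{remark 1.2.7.} give $\mathcal{M}\psi(s,x)\ge\mathcal{M}V^{+}(s,x)\ge V^{+}(s,x)=\psi(s,x)$, so the branch $\psi-\mathcal{M}\psi$ is $\le 0$ automatically and the subsolution inequality reduces to $\min\{-\partial_s\psi-\mathcal{L}\psi-f,\ \psi-G\}(s,x)\le 0$. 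For $V^{-}$ one instead modifies $\psi$ so that $\psi\le V^{-}$ on a small neighbourhood of $(s,x)$; see the last paragraph.

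\emph{Supersolution of $V^{-}$.} Suppose the reduced inequality fails, i.e.\ $-\partial_s\psi-\mathcal{L}\psi-f<0$ and $\psi-\mathcal{M}\psi<0$ at $(s,x)$. By continuity (of $\psi$, $f$ and, arguing as in Lemma~\ref{Lemma 2.1.7.}, of $\mathcal{M}\psi$) there are $\kappa,\theta>0$ and a small parabolic ball $B_r(s,x)$ with $\partial_s\psi+\mathcal{L}\psi+f\ge\kappa$ and $\mathcal{M}\psi-\psi\ge\theta$ on $B_r$. Fix $\epsilon>0$; for $h$ small let $\alpha^{\epsilon}\in\mathcal{A}_{(0,T)}$ be $\epsilon h$-optimal in the outer infimum of (\ref{viscDPPminus}) and test it against the stopping time $\rho^{\ast}\in\mathcal{T}$ which does not stop on $[s,s+h]$, so that (\ref{viscDPPminus}) gives $V^{-}(s,x)+\epsilon h\ge\mathbb{E}\big[\int_{s}^{s+h}f\,dr+\sum_{j}c(\tau_j,\xi_j)\mathbf 1_{\{\tau_j\le s+h\}}+V^{-}(s+h,X_{s+h})\big]$ up to corrections of order $O(h/r^{2})$ from the events $\{\tau_S\le s+h\}$ and $\{X\ \text{leaves}\ B_r\}$. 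Bounding $V^{-}(s+h,\cdot)\ge\psi(s+h,\cdot)$, applying It\^o's formula to $\psi$ along the controlled jump trajectory, and using at each impulse time $\psi(\tau_j,X_{\tau_j})+c(\tau_j,\xi_j)\ge\mathcal{M}\psi(\tau_j,X_{\tau_j^{-}})\ge\psi(\tau_j,X_{\tau_j^{-}})+\theta$, one arrives at $\epsilon h\ge\kappa h+\theta\,\mathbb{E}[N]-o(h)$, where $N$ is the number of impulses in $[s,s+h]$, hence $\epsilon h\ge\tfrac{\kappa}{2}h$ for $h$ small, contradicting the choice $\epsilon<\kappa/2$.

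\emph{Subsolution of $V^{+}$.} Suppose the reduced inequality fails, i.e.\ $-\partial_s\psi-\mathcal{L}\psi-f>0$ and $\psi-G>0$ at $(s,x)$; by continuity there are $\kappa,\delta_{0}>0$ and $B_r(s,x)$ with $\partial_s\psi+\mathcal{L}\psi+f\le-\kappa$ and $\psi\ge G+\delta_{0}$ on $B_r$. Fix $\epsilon>0$; for $h$ small let $\beta^{\epsilon}\in\mathcal{B}_{(0,T)}$ be $\epsilon h$-optimal in the outer supremum of (\ref{viscDPPplus}) (the value function reappearing in its last term being $V^{+}$), and test it against the no-impulse control $u_{0}$, so that, writing $\rho:=\beta^{\epsilon}(u_{0})$ and $X^{0}$ for the uncontrolled process, $V^{+}(s,x)-\epsilon h\le\mathbb{E}\big[\int_{s}^{(s+h)\wedge\rho}f(X^{0})\,dr+G(\rho,X^{0}_{\rho})\mathbf 1_{\{\rho\le s+h\}}+V^{+}(s+h,X^{0}_{s+h})\mathbf 1_{\{\rho>s+h\}}\big]$ up to $O(h/r^{2})$ corrections. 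Using $V^{+}(s+h,\cdot)\le\psi(s+h,\cdot)$ and $G\le\psi-\delta_{0}$ on $B_r$, It\^o's formula for $\psi$ along $X^{0}$ on $[s,(s+h)\wedge\rho]$, and $\partial_s\psi+\mathcal{L}\psi+f\le-\kappa$ on $B_r$, one gets $-\epsilon h\le-\kappa h\,\mathbb{P}(\rho>s+h)-\delta_{0}\,\mathbb{P}(\rho\le s+h)+o(h)\le-\min(\kappa h,\delta_{0})+o(h)$, hence $\epsilon\ge\kappa/2$ for $h$ small, again a contradiction. No handling of impulses is needed here, since we test against $u_{0}$.

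The step I expect to be genuinely delicate is the non-local term in the supersolution proof: when $\alpha^{\epsilon}$ places impulses while the pre-jump state lies outside $B_r$, the inequality $\mathcal{M}\psi\ge\psi+\theta$ is unavailable there, and since $c$ need not be bounded above and the number of impulses is only finite on average, the contribution of such impulses must be controlled. The clean remedy is the standard ``no intervention in the continuation region'' device (cf.\ \cite{cosso2013stochastic,bensoussan1982controle}): shrink $r$ so that the oscillation of $\psi$ on $B_{2r}$ is below $\lambda_c$, set $\psi\equiv-K$ outside $B_{2r}$ for a large constant $K$ and interpolate smoothly, leaving $\psi$ untouched on the tiny ball where $(s,x)$ is a minimum of $V^{-}-\psi$; using $c\ge\lambda_c$ (Assumption~A.4) this forces $\mathcal{M}\psi\ge\psi+\theta$ \emph{everywhere}, so all impulses contribute nonnegatively, while $\psi\le V^{-}$ is then needed only on the tiny ball, which the diffusion cannot leave in time $O(h)$ except on an event of probability $O(h/r^{2})$ on which boundedness of $V^{-}$ and $\psi$ closes the estimate. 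The remaining points — the control-uniform bound $\mathbb{E}[\sup_{[s,s+h]}|X^{u}_{r}-x|]=O(\sqrt h)$ (from the growth conditions~A.2), hence the $O(h/r^{2})$ exit estimates, the interaction of the deterministic increment $h$ with the localization radius $r$, and the edge case $x\in\partial A$ where $\tau_S$ may be small — are routine.
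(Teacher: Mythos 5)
Your architecture coincides with the paper's: both read the viscosity inequalities off the DPP (Theorem \ref{Dynamic_programming_principle_for_stochastic_differential_games_of_control_and_stopping_with_Impulse_Controls}) with an $\epsilon h$-optimal strategy, test against the no-impulse control (resp.\ a non-stopping time) to isolate one obstacle at a time, apply It\^o to the test function off an exit event of small probability, and send $h,\epsilon\downarrow0$ to reach a contradiction. Your explicit disposal of the obstacle branches ($V^{\pm}\ge G$ via immediate stopping together with Lemma \ref{Lemma 2.1.3.}; $\psi-\mathcal{M}\psi\le0$ at the touching point via monotonicity of $\mathcal{M}$, Remark \ref{remark 1.2.7.} and the boundedness in Proposition \ref{Proposition 2.1.5}) is cleaner than what appears in the paper, and the subsolution half of your argument is sound.

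The gap sits exactly where you flag it, and your remedy does not close it. If you set $\psi\equiv-K$ outside $B_{2r}$ with $K$ large enough that $\psi\le V^{-}$ globally (so $K\ge\|V^{-}\|_{\infty}$), then for a pre-jump state $x\in B_{r}$ and an impulse $z$ whose target $\Gamma(x,z)$ lands outside $B_{2r}$ the only available bound is $\psi(\cdot,\Gamma(x,z))+c(\cdot,z)\ge-K+\lambda_{c}$, whereas $\psi(\cdot,x)$ is close to $V^{-}(s,x)$; hence $\mathcal{M}\psi\ge\psi+\theta$ on $B_{r}$ would force $K\le\lambda_{c}-\theta-V^{-}(s,x)$, which is incompatible with $K\ge\|V^{-}\|_{\infty}$ unless the oscillation of $V^{-}$ is below $\lambda_{c}$. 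Shrinking $r$ controls only jumps that stay inside $B_{2r}$; the controller is free to apply a large impulse that exits, and the corresponding term $c(\tau_{j},\xi_{j})+\psi(\tau_{j},X_{\tau_{j}})-\psi(\tau_{j},X_{\tau_{j}^{-}})$ in your telescoped It\^o identity is then bounded below only by $\lambda_{c}-K-\sup_{B_{2r}}\psi$, i.e.\ it can be arbitrarily negative. The claim that ``all impulses contribute nonnegatively'' is therefore false for the modified test function. The repair is to account for the jumps at the level of $V^{-}$ itself, namely $c(\tau_{j},\xi_{j})+V^{-}(\tau_{j},\Gamma(X_{\tau_{j}^{-}},\xi_{j}))\ge\mathcal{M}V^{-}(\tau_{j},X_{\tau_{j}^{-}})\ge V^{-}(\tau_{j},X_{\tau_{j}^{-}})$, which holds everywhere by Remark \ref{remark 1.2.7.} and requires no surgery on $\psi$: one strips the impulses out of the DPP first (telescoping over the intervention times, invoking $\psi\le V^{-}$ only on the impulse-free initial segment) and only then applies It\^o to $\psi$ along the uncontrolled path, as in \cite{cosso2013stochastic}. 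This is in effect what the paper does in (\ref{viscviscproofimpulseremovalestimatepsi})--(\ref{viscviscproofconstantestimates}), trading the minimal intervention cost $\lambda_{c}$ of each impulse against the change it induces in the continuation value so as to replace the controlled process by $u_{0}$ before the It\^o step is ever taken.
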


\begin{proof}
The lemma is proven by contradiction.

We begin by proving that $V^+$ is a viscosity subsolution of (\ref{viscp1obstacle}).

Suppose $\psi:[0,T]\times S\to \mathbb{R}$ is a test function with $\psi \in \mathcal{C}^{1,2} ([0,T],S) $ and $ (t,x) \in [0,T]\times S$ are s.th. $V^+-\psi$ attains a local minimum at $(t,x) $ with $V^+(t,x)-\psi(t,x)=0$. We note that it remains only to show that $\forall (s,x) \in [0,T]\times S, \frac{\partial \psi}{\partial s} (s,x)+\mathcal{L}\psi(s,x)+f (s,x )\geq 0$ whenever $\psi(s,x)-G(s,x)>0$ which, as we now show, follows as a classical consequence of the DPP.

Indeed, by Proposition \ref{Proposition 2.1.4.} we can deduce the existence of a $\epsilon -$optimal strategy $\alpha^\epsilon  \in \mathcal{A}_{(0,T)} $ to which the associated control is $\alpha^\epsilon  (v)\equiv u^\epsilon  \in \mathcal{U}$ (against $V^+(t_0,x_0)$) s.th. \begin{align}
&\qquad\qquad\qquad\qquad\qquad\qquad\qquad\qquad\psi (t_0,x_0)=V^+(t_0,x_0)\nonumber
\\&\begin{aligned}
\geq  \inf_{u \in \mathcal{U}}\mathbb{E}\Bigg[\int_{t_0}^{\tau_S\wedge\rho}&f (s,X_s^{t_0,x_0,u} )ds+\sum_{j\geq  1} c(\tau_j,\xi_j) \cdot 1_{\{\tau_j\leq\tau_S\wedge\rho\}} 
\\&+G(\tau_S\wedge\rho,X_{\tau_S}^{t_0,x_0,u} )\cdot 1_{\{\tau_S\wedge\rho<t_0+h \}} 
+V^+ (t_0+h,X_{t_0+h}^{t_0,x_0,u} ) \cdot 1_{\{t_0+h=\rho \}}  \Bigg] 	\end{aligned}
\nonumber\\&
\begin{aligned}\geq \mathbb{E}\Bigg[\int_{t_0}^{\tau_S\wedge\rho}&f (s,X_s^{t_0,x_0,u_\epsilon } )ds+\sum_{j\geq  1} c(\tau_j^\epsilon ,\xi_j^\epsilon  )  \cdot 1_{\{\tau_j\leq\tau_S\wedge\rho\}} \\&\begin{aligned}+G(\tau_S\wedge\rho,X_{\tau_S\wedge\rho}^{t_0,x_0,u_\epsilon   } ) \cdot 1_{\{\tau_S\wedge\rho<t_0+h \}} 
+V^+ (t_0+h,X_{t_0+h}^{t_0,x_0,u_\epsilon} ) \cdot 1_{\{t_0+h=\rho \}}  \Bigg]&
\\-\epsilon h.&
\label{viscviscproofstep1}
\end{aligned}
\end{aligned}
\end{align}
Let us now define: 
\begin{align}\nonumber
\phi^{[h]}  (t_0,x_0 ):=  \mathbb{E}\Bigg[\int_{t_0}^{\tau_s  \wedge\rho} f (s,X_s^{t_0, x_0,u_0} )ds&+G (\tau_S  \wedge\rho ,X_{\tau_S  \wedge\rho }^{t_0, x_0  , u_0} ) \cdot 1_{\{\tau_S  \wedge\rho< t_0  +h  \}}
\\&+\psi (t_0+h,X_{t  +h }^{t_0, x_0  , u_0} ) \cdot 1_{\{t_0  +h=\rho  \}}  \Bigg], 	\label{viscviscproofphidefn}
\end{align}
where $u_0  \in \mathcal{U}$ is the player I control s.th. no impulses are exercised.
We firstly wish to show that given $\epsilon >0$ we have that: \begin{equation}
\psi (t_0,x_0)\geq  \phi^{[h]}  (t_0,x_0 )-2\epsilon h . 	\label{viscviscproofphipsiineq}
\end{equation}
 Indeed, we firstly note that:
\begin{equation}
X_{t_0+h}^{t_0,x_0,u_\epsilon}\equiv X_{t_0+h}^{t_0,x_0,u_{0_{[t_0,t_0+h]}}  }+\sum_{j=1}^{\mu_{[t_0,t_0+h]} (u_\epsilon  )} \xi_j^\epsilon    . 	
\end{equation}
We now exploit the regularity of $V$ and the boundedness of the sequence of intervention costs, indeed we have that: 
\begin{align}
\mathbb{E}&\Bigg[\sum_{j\geq  1} c(\tau_j^\epsilon ,\xi_j^\epsilon  )  \cdot 1_{\{\tau_j^\epsilon\leq (t_0+h)\wedge\rho\}} +V^+ (t_0+h,X_{t_0+h}^{t_0,x_0,u_\epsilon} ) \cdot 1_{\{t_0+h=\rho \}}  \Bigg] 
\nonumber\\\nonumber\geq \mathbb{E}&\Bigg[(V^+ (t_0+h,X_{t_0+h}^{t_0,x_0} )+(\lambda-c(\rho-t_0 )^{\beta} ) \cdot 1_{\mu_{[t_0,t_0+h] (u_\epsilon   )}\geq 1  } \cdot 1_{\{t_0+h=\rho \}}  \Bigg] 	
\\\geq \mathbb{E}&[(\psi (t_0  +h, X_{t_0+h}^{t_0,x_0} )+(\lambda-c(\rho-t_0 )^{\beta} ) \cdot 1_{\mu_{[t_0,t_0+h] (u_\epsilon   )}\geq 1} \cdot 1_{\{t_0  +h=\rho  \}}  ], 	\label{viscviscproofimpulseremovalestimatepsi}
\end{align}
where we have used the fact that $ \sum_{j\geq 1}\inf_{z\in\mathcal{Z}}c(\tau_j^\epsilon ,z) \geq  \lambda\cdot 1_{\mu_{[t_0,t_0+h]} (u_\epsilon   )} $  for some $\lambda>0$ and that if $X'\equiv  X_{t_0+h}^{t_0,x_0,u_{0_{[t_0,t_0+h]}}  }+\sum_{j=1}^{\mu_{[t_0,t_0+h]} (u_\epsilon )} \xi_j^\epsilon$, we have that $V^+ (t_0+h,X_{t_0+h}^{t_0,x_0} )=V^+ (t_0+h,X' )+(V^+ (t_0+h,X_{t_0+h}^{t_0,x_0} )-V^+ (t_0+h,X' ))\leq V^+ (t_0+h,X' )+ch^{\frac{   1}{2}}$ for some $c>0$ using Lemma \hyperlink{Ap.4}{A.3.} and Gronwall's lemma.
Using the same arguments we can similarly deduce that there exists some constant $c>0$ such that:
\begin{gather}
G (t_0+h,X_{t_0+h}^{t_0,x_0,u_\epsilon})+f (t_0+h,X_{t_0+h}^{t_0,x_0,u_\epsilon})\geq f (t_0+h,X_{t_0+h}^{t_0,x_0} ) + G (t_0+h,X_{t_0+h}^{t_0,x_0} )-ch^{\frac{1}{2}}.
\label{viscviscproofGFfreeestimates}\end{gather}
 Now, since $ (\lambda-c(\rho-t_0 )^{\frac{1}{2}} ) \cdot 1_{\mu_{[t_0,t_0+h]} (u_\epsilon   )\geq 1} \cdot 1_{\{t_0  +h=\rho  \}} =(\lambda-ch^{\frac{1}{2}} ) \cdot 1_{\mu_{[t_0,t_0+h]} (u_\epsilon   )\geq 1} $  and since there exists $\bar{s} \in ]t_0,T[ $ s.th. for $h \in [t_0,\bar{s}]$ for any $\epsilon >0$ we have that: \begin{equation}
 (\lambda-ch^{\frac{1}{2}}  ) \cdot 1_{\mu_{[t_0,t_0+h]} (u_\epsilon  )\geq 1} \geq -\epsilon h,  	\label{viscviscproofconstantestimates}\end{equation}
we observe that after inserting (\ref{viscviscproofconstantestimates}) and (\ref{viscviscproofGFfreeestimates}) into (\ref{viscviscproofimpulseremovalestimatepsi}) and (\ref{viscviscproofstep1}), we deduce that (\ref{viscviscproofphipsiineq}) does indeed hold.

Hence, combining (\ref{viscviscproofphidefn}) and (\ref{viscviscproofphipsiineq}) we find that: \begin{align}
&\qquad\qquad\qquad\qquad\qquad\qquad\qquad\qquad\psi (t_0,x_0)=V^+(t_0,x_0)
\nonumber\\&
\begin{aligned}
\geq  \inf_{u \in \mathcal{U}}\mathbb{E}\Bigg[\int_{t_0}^{\tau_S\wedge\rho}&f (s,X_s^{t_0,x_0,u} )ds+\sum_{j\geq  1} c(\tau_j,\xi_j) \cdot 1_{\{\tau_j\leq\tau_S\wedge\rho\}}  
\\&+G(\tau_S  \wedge\rho,X_{\tau_S}^{t_0,x_0,u} )\cdot 1_{\{\tau_S  \wedge\rho< t_0  +h  \}}+V^+ (t_0+h,X_{t_0+h}^{t_0,x_0,u} ) \cdot 1_{\{t_0+h=\rho  \}}  \Bigg] \end{aligned}
\nonumber\\
&\begin{aligned}
\geq  \mathbb{E}\Bigg[\int_{t_
0}^{\tau_S  \wedge\rho} f (s, X_s^{t_0  , x_0  , u_\epsilon} )ds &+G (\tau_S  \wedge\rho ,X_{\tau_S  \wedge\rho }^{t_0  , x_0  , u_\epsilon} )\cdot 1_{\{\tau_S  \wedge\rho< t_0  +h  \}}  
\\&+\psi (t_0  +h, X_{t_0  +h}^{t_0  , x_0} ) \cdot 1_{\{t_0  +h=\rho  \}}  \Bigg]-2\epsilon h.   	\label{viscviscproofpsiweakdpp}
\end{aligned}
\end{align}
Let us now define as $\Lambda (s,x):=(\frac{\partial}{\partial s}+\mathcal{L})\psi (s,x) $. By It\={o}'s formula for c\`{a}dl\`{a}g semi-martingale processes (see for example Theorem II.33 of \cite{protter2005stochastic}), we have that: 
\begin{align}\nonumber
\psi(t_0,x_0)=\psi(t_0+h,X_{t_0+h}^{t_0,x_0,u_\epsilon} )-\int_{t_0}^{t_0+h\wedge\tau_S\wedge\rho}  \left\langle \nabla_x\psi(s,X_s^{t_0,x_0,u_\epsilon } ),\mu(s,X_s^{t_0,x_0,u_\epsilon } )\right\rangle dB_s&
\\-\int_{t_0}^{t_0+h\wedge\tau_S\wedge\rho}  \Lambda (s,X_s^{t_0,x_0,u_\epsilon } )ds.& 	 \end{align}

In order to generate a contradiction, we assume that $G(s,x)-V^+(s,x)=G(s,x)-\psi(s,x)\geq 0$ and suppose that the supposition of the lemma is false so that $\Lambda (s,x)+f (s,x )>0$. We can therefore consider constants $a,h,
\delta >0$ s.th. $\forall  (s,x) \in [t_0,t_0+h]\times B_a (x) $ s.th. $G(s,x)-\psi(s,x)\geq \delta$  and $\Lambda (s,x)+f (s,x )\geq \delta$ .

Let us now define the set $E:=\{ \inf_{s \in [t_0,t_0+h]}| X_s^{t_0,x_0,\cdot}-x|>a\}$ then using Lemma \hyperlink{Lemma A.1.3}{A.1.3} (i.e. the $\frac{1}{2}$-H\"{o}lder continuity of $X$) and by Tchebyshev's inequality, we can deduce the existence of a constant $c>0$ that depends only on the parameters of $X_s^{t_0,x_0,\cdot}$ s.th. $\mathbb{P}[E]\leq  \inf_{s \in [t_0,t_0+h]}  \frac{c(s-t_0 )^2}{a^4} \leq \frac{c h^2 }{a^4}$  . 

Then since 
\begin{equation}
\mathbb{E}\left[\psi(t_0+h,X_{t_0+h}^{t_0,x_0,u_\epsilon} )\right]-\psi(t_0,x_0)=\mathbb{E}\left[\int_{t_0}^{(t_0+h)\wedge\tau_S}  \Lambda (s,X_s^{t_0,x_0,u_\epsilon } )ds\right], 	
\end{equation}
we have that: 
\begin{align*}
&\begin{aligned}
-\psi(t_0,x_0)=\mathbb{E}&\left[ 1_{E^c}\cdot\left(\int_{t_0}^{(t_0+h)\wedge\tau_S} \Lambda (s,X_s^{t_0,x_0,u_\epsilon } )ds-\psi(t_0+h,X_{t_0+h}^{t_0,x_0,u_\epsilon} )\right)\right]\\&\hfill+\mathbb{E}\left[1_E\cdot\left (\int_{t_0}^{(t_0+h)\wedge\tau_S}  \Lambda (s,X_s^{t_0,x_0,u_\epsilon } )ds-\psi(t_0+h,X_{t_0+h}^{t_0,x_0,u_\epsilon} )\right)\right]
\end{aligned}
\\&\qquad\quad
\geq \mathbb{E}\left[1_{E^c}\cdot \left(\int_{t_0}^{(t_0+h)\wedge\tau_S}  \Lambda (s,X_s^{t_0,x_0,u_\epsilon } )ds-\psi(t_0+h,X_{t_0+h}^{t_0,x_0,u_\epsilon} )\right)\right]-\frac{ch^2}{a^4}. 	
\end{align*}
Hence, by the given assumptions, we have that: \begin{align}
&
-\psi (t_0,x_0)
\\&\begin{aligned}\geq \mathbb{E}&\Bigg[1_{E^c}\cdot\Bigg(\int_{t_0}^{t_0  +h\wedge \tau_S} (\delta -f (s, X_s^{t_0  , x_0  , u_\epsilon} ))ds  
\\&\begin{aligned}\hspace{-3 mm}+\left(\delta-G (\tau_S  \wedge\rho ,X_{\tau_S  \wedge\rho }^{t_0  , x_0  , u_\epsilon} )\right) \cdot 1_{\{\rho<t_0+h\}}-\psi(t_0+h,X_{t_0+h}^{t_0,x_0,u_\epsilon} ) \cdot 1_{\{t_0  +h=\rho \} }  \Bigg)\Bigg]&
\\-\frac{ch^2}{a^4}&
\end{aligned}  	
\end{aligned}  	
\nonumber\\&
\begin{aligned}
\geq &\mathbb{E}\Bigg[\delta\big(h+\mathbb{E}[ 1_{\{\rho<t_0+h\}}]\big)-\int_{t_0}^{t_0  +h\wedge \tau_S} f (s, X_s^{t_0  , x_0  , u_\epsilon} )ds
\\&\begin{aligned} -G (\tau_S  \wedge\rho ,X_{\tau_S  \wedge\rho }^{t_0  , x_0  , u_\epsilon} ) \cdot 1_{\{\tau_S  \wedge\rho< t_0  +h  \}}
 -\psi (t_0  +h, X_{t_0+h}^{t_0,x_0} ) \cdot 1_{\{t_0  +h=\rho  \}}  \Bigg]&
 \\-2 \frac{ch^2}{a^4} -\epsilon h.& 	\label{viscviscproofpsiweakdpp2}
\end{aligned}
\end{aligned}
\end{align}
Therefore combining (\ref{viscviscproofpsiweakdpp2}) and (\ref{viscviscproofpsiweakdpp}) and after rearranging we find that: \begin{equation}
\frac{2 ch^2}{a^4} +3\epsilon h\geq  \mathbb{E}\left[\delta\big(h+ \mathbb{E}[1_{\{\rho<t_0+h\}}\big)]\right]. \label{viscviscproofhpexpectationineq}	\end{equation}
Since $\mathbb{E}\left[h+1_{\{\rho<t_0+h\}}\right]\geq h$ using (\ref{viscviscproofhpexpectationineq}) we readily deduce that:
\begin{equation}
\frac{1}{2} \delta h\leq   \frac{ch^2}{a^4} +\frac{3}{2} \epsilon  h. 	
\end{equation} After which after dividing through by $h$  we find that: \begin{equation}
\frac{1}2 \delta - \left(\frac{c}{a^4}h +\frac{3}{2} \epsilon  \right)\leq 0. 	\label{viscviscproofhcontradictionineq}\end{equation}
We then deduce the result since both $h$ and $\epsilon$   can be made arbitrarily small which implies (\ref{viscviscproofhcontradictionineq}) yields a contradiction.

Next we prove that $V^-$ is a viscosity supersolution of (\ref{viscp1obstacle}). As in part (i), we prove the result by generating a contradiction, hence now suppose $\psi:[0,T]\times S\to \mathbb{R}$ is a test function with $\psi \in \mathcal{C}^{1,2} ([0,T ],S) $ and suppose $(t,x) \in [0,T]\times S$ is s.th. $\mathcal{M}V^{-}-\psi$ attains a local maximum at $(t,x)$.

In order to generate a contradiction, we assume that $\mathcal{M}V^- (s,x)-V^- (s,x)=\mathcal{M}V^- (s,x)-\psi(s,x)\leq 0$ and suppose that the supposition of the lemma is false so that $-\Lambda (s,x)-f (s,x )>0$, and consider constants $h,\delta >0$ s.th. $\forall  (s,x) \in [t_0,t_0+h]\times B_h (x)$ and $\mathcal{M}V^- (s,x)-\psi(s,x)\leq -\delta$  and $\Lambda (s,x)+f (s,x )\leq -\delta$. By Proposition \ref{Proposition 2.1.4.}, we can deduce the existence of an $\epsilon -$optimal strategy $\beta^\epsilon  \in \mathcal{B}_{(0,T)}$ to which the associated stopping time is $\beta^\epsilon  (u)\equiv \rho^\epsilon  \in \mathcal{T}$ for all $u \in \mathcal{U} $ (against $V^-)$ s.th. \begin{align}
&\begin{aligned}
\psi(t_0,x_0)&\leq \sup_{\rho \in \mathcal{T}} \mathbb{E}\Bigg[\int_{t_0}^{(t_0+h)\wedge\rho\wedge\tau_S} f (s,X_s^{t_0,x_0,\alpha(\rho)}  )ds+\sum_{j\geq  1} c(\tau_j,\xi_j ) \cdot 1_{\{\tau_j<(t_0+h)\wedge\rho\wedge\tau_S \}}  \\&+G(\rho\wedge\tau_S,X_{\rho\wedge\tau_S}^{t_0,x_0,\alpha(\rho)} ) \cdot 1_{\{\rho\wedge\tau_S\leq t_0+h\}}  +V^- (t_0+h,X_{t_0+h}^{t_0,x_0,\alpha(\rho)} ) \cdot 1_{\{\rho\wedge\tau_S> t_0+h\}}  \Bigg]\nonumber\end{aligned} 	
\\&
\begin{aligned}
\leq &\mathbb{E}\Bigg[\int_{t_0}^{(t_0+h)\wedge\rho^\epsilon \wedge\tau_S} f (s,X_s^{t_0,x_0,\alpha(\rho^\epsilon  )} )ds+\sum_{j\geq  1} c(\tau_j,\xi_j ) \cdot 1_{\{\tau_j<(t_0+h)\wedge\rho^\epsilon \wedge\tau_S\}}  \\&\begin{aligned}\quad+G(\rho^\epsilon\wedge\tau_S,X_{\rho^\epsilon\wedge\tau_S}^{t_0,x_0,\alpha(\rho^\epsilon)} ) \cdot 1_{\{\rho^\epsilon\wedge\tau_S\leq t_0+h\}}  +V^- (t_0+h,X_{t_0+h}^{t_0,x_0,\alpha(\rho^\epsilon  )} ) \cdot 1_{\{\rho^\epsilon\wedge\tau_S> t_0+h\}}  \Bigg]&
\\+\epsilon h .& 	\label{viscviscproofstep1P2}
\end{aligned}
\end{aligned}
\end{align}
After re-employing the estimate (\ref{viscviscproofphipsiineq}), we find that: \begin{align}\nonumber
\psi(t_0,x_0)\leq \mathbb{E}\Bigg[\int_{t_0}^{(t_0+h)\wedge\rho^\epsilon \wedge\tau_S} f (s,X_s^{t_0,x_0,u_0} )ds&+G(\rho^\epsilon\wedge\tau_S,X_{\rho^\epsilon\wedge\tau_S}^{t_0,x_0,u_0} ) \cdot 1_{\{\rho^\epsilon\wedge\tau_S\leq t_0+h\}}
\\&\begin{aligned}
+V^- (t_0+h,X_{t_0+h}^{t_0+h,u_0} ) \cdot 1_{\{\rho^\epsilon\wedge\tau_S> t_0+h\}}  \Bigg]&
\\+2\epsilon h. 	&
\label{viscviscproofstep1P2psipseduodpp}
\end{aligned}
\end{align}
Now by Remark \ref{remark 1.2.7.}, we have that $-\delta \geq \mathcal{M}V^- (s,x)-\psi(s,x)\geq V^- (s,x)-\psi(s,x)$; that is $\psi(s,x)\geq V^- (s,x)+\delta,\;  \forall  (s,x) \in [t_0,t_0+h]\times B_h (x)$. Using the definition of $\Lambda$   and the set $E$ introduced earlier and, again applying It\={o}'s formula, by similar reasoning as part (i), we find that: 
\begin{align}
\psi (t_0,x_0)&\geq \mathbb{E}\Bigg[\Big(\int_{t_0}^{(t_0+h)\wedge\tau_S} -\Lambda  (s,X_s^{t_0  , x_0  ,\alpha (\rho^\epsilon   )} )  ds+\psi(t_0  +h, X_{t_0+h}^{t_0, x_0  ,\alpha (\rho^\epsilon   )} )\Big) \cdot 1_{E^c} \Bigg]-\frac{ch^2}{a^4}
\nonumber\\
&\begin{aligned}
\geq \mathbb{E}\Bigg[\Big(\int_{t_0}^{(t_0+h)\wedge\tau_S} \left(\delta +f (s,X_s^{t_0  , x_0  ,\alpha (\rho^\epsilon   )} )\right)  ds + V^- (t_0  +h, X_{t_0+h}^{t_0, x_0  ,\alpha (\rho^\epsilon   )} )\Big) \cdot 1_{E^c} \Bigg]&
\\+\delta-\frac{ch^2}{a^4}.&
\end{aligned}
\end{align} 	
Employing similar reasoning as in part (i), and again re-employing the estimate (\ref{viscviscproofphipsiineq}) we find that: 
\begin{align}
&\begin{aligned}
\psi(t_0,x_0)&\geq\mathbb{E}\Bigg[\delta\big(h+ \mathbb{E}[1_{\{\rho^\epsilon<t_0+h\}}]\big)+\int_{t_0}^{(t_0+h)\wedge\tau_S} f (s,X_s^{t_0,x_0,\alpha(\rho^\epsilon) } )ds
\\&\begin{aligned}\;+G(\tau_S\wedge\rho,X_{\tau_S\wedge\rho^\epsilon}^{t_0,x_0,\alpha(\rho^\epsilon)} ) \cdot 1_{\{\tau_S\wedge\rho^\epsilon<t_0+h \}} -V^- (t_0  +h, X_{t_0+h}^{t_0, x_0  ,\alpha (\rho^\epsilon   )} ) \cdot 1_{\{t_0+h=\rho^\epsilon \}}  \Bigg]&
\\-2 \frac{ch^2}{a^4} -\epsilon h&
\end{aligned}
\end{aligned}  	
\nonumber\\
&
\begin{aligned}
\geq \mathbb{E}\Bigg[\delta \mathbb{E}[1_{(\rho^\epsilon<t_0+h)}]+\int_{t_0}^{(t_0+h)\wedge\tau_S}& f(s,X_s^{t_0,x_0,u_0} )ds+G(\tau_S\wedge\rho^\epsilon,X_{\tau_S\wedge\rho}^{t_0,x_0,u_0} ) \cdot 1_{\{\tau_S\wedge\rho^\epsilon<t_0+h \}} \\&-V^- (t_0  +h, X_{t_0+h}^{t_0, x_0  ,u_0} ) \cdot 1_{\{t_0+h=\rho^\epsilon \}}  \Bigg]-2 \frac{ch^2}{a^4} -2\epsilon h, 	\label{viscviscproofstep1P2penultim}\end{aligned}
\end{align}
where we have used the fact that $h>0$ which implies that $\delta\big(h+ \mathbb{E}[1_{\{(\rho^\epsilon<t_0+h)\}}]\big)>\delta\mathbb{E}[1_{\{(\rho^\epsilon<t_0+h)\}}]$.

Hence, combining (\ref{viscviscproofstep1P2penultim}) with (\ref{viscviscproofstep1P2}) and since:
\begin{equation}
4\epsilon h\geq \delta h-2 \frac{ch^2}{a^4} , 	\label{viscviscproofstep1P2hineq}
\end{equation}
for $h$ small enough $h<1$, we therefore find that:
\begin{equation}
\frac{1}{2}\delta - \left(2\epsilon +  \frac{ch}{a^4}\right)\leq 0, 	\end{equation}
which is a contradiction since both $\epsilon$  and $h$ can be made arbitrarily small --- hence we deduce the thesis.

\end{proof} 
Crucially, Lemma \ref{Lemma 2.6.2.} establishes the viscosity solution property of the game which, in conjunction with the DPP (Theorem \ref{Dynamic_programming_principle_for_stochastic_differential_games_of_control_and_stopping_with_Impulse_Controls}) is derived from first principles. We have therefore succeeded in characterising the value of the game in terms of a viscosity solution

\begin{theorem}\label{Theorem 2.6.5.} 
If the value of the game $V$ exists, then $V$ is a viscosity solution to the HJBI equation (\ref{viscp1obstacle}).
\end{theorem}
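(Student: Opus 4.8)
The plan is to obtain Theorem~\ref{Theorem 2.6.5.} as an immediate consequence of Lemma~\ref{Lemma 2.6.2.} combined with the equality of the upper and lower value functions. By hypothesis the value of the game exists, so by Theorem~\ref{viscvaluefunctionequalitytheorem} there is a function $V\in\mathcal{H}$ with $V(t,x)=V^-(t,x)=V^+(t,x)$ for every $(t,x)\in[0,T]\times S$. First I would record that $V$ inherits the regularity needed even to speak of it as a viscosity solution: by Proposition~\ref{Proposition 2.1.4.} it is Lipschitz continuous in $x$ and $\frac{1}{2}$-H\"{o}lder continuous in $t$, hence continuous, and by Proposition~\ref{Proposition 2.1.5} it is bounded; in particular $V$ is locally bounded and simultaneously lower and upper semicontinuous, so it is admissible as a candidate supersolution and subsolution in the sense of Definition~\ref{Definition 2.6.1}.

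The core of the argument is then a transfer step for test functions. Fix $(s,x)\in[0,T]\times S$ and $\phi\in\mathcal{C}^{1,2}([0,T];S)$. If $(s,x)$ is a local maximum of $V-\phi$, then since $V\equiv V^+$ it is also a local maximum of $V^+-\phi$, and the subsolution property of $V^+$ established in Lemma~\ref{Lemma 2.6.2.} shows that $\phi$ satisfies the subsolution inequality of (\ref{viscsolutiondefn}) at $(s,x)$; symmetrically, if $(s,x)$ is a local minimum of $V-\phi$, it is a local minimum of $V^--\phi$, and the supersolution property of $V^-$ in Lemma~\ref{Lemma 2.6.2.} yields the corresponding supersolution inequality. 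Hence $V$ is at once a viscosity subsolution and a viscosity supersolution of (\ref{viscp1obstacle}), which is precisely the definition of a viscosity solution given in Definition~\ref{Definition 2.6.1}.

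The only point requiring more than this bookkeeping is the terminal obstacle component of (\ref{viscp1obstacle}), namely $V(X^{t,x,u}(\tau_S\wedge\rho))=G(X^{t,x,u}(\tau_S\wedge\rho))$ on $\{\tau_S\wedge\rho<\infty\}$. I would argue this directly from the payoff functional: once the game terminates at $\tau_S\wedge\rho$, the only surviving contribution to $J[t,x;u,\rho]$ is the bequest term $G(\tau_S\wedge\rho,X_{\tau_S\wedge\rho}^{t,x,u})$, so passing to the infimum over player~I controls and the supremum over player~II stopping strategies leaves $V=G$ at the stopped state, and the continuity of $V$ (Proposition~\ref{Proposition 2.1.4.}) together with that of $G$ (assumption~\hyperlink{A.1.2.}{A.1.2.}) makes the identification rigorous up to the boundary. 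Since Theorem~\ref{Theorem 2.6.5.} presupposes the existence of the value rather than asserting it, no comparison or uniqueness input is needed here: the substantive analytic work, namely the DPP of Theorem~\ref{Dynamic_programming_principle_for_stochastic_differential_games_of_control_and_stopping_with_Impulse_Controls} and the contradiction arguments of Lemma~\ref{Lemma 2.6.2.}, has already been carried out. The only residual obstacle is notational --- ensuring that the local-extremum conditions and the operators $\mathcal{L}$ and $\mathcal{M}$ are phrased consistently so that $V$, $V^+$ and $V^-$ may be interchanged without ambiguity.
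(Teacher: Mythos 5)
Your proof is correct, but it takes a genuinely different route from the paper's. You obtain the theorem as a pure corollary: since the existence of the value means $V\equiv V^-\equiv V^+$ pointwise, every local extremum of $V-\phi$ is a local extremum of $V^{\pm}-\phi$, so the sub- and supersolution properties established in Lemma~\ref{Lemma 2.6.2.} transfer verbatim to $V$, and the continuity and boundedness from Propositions~\ref{Proposition 2.1.4.} and~\ref{Proposition 2.1.5} make $V$ an admissible candidate in the sense of Definition~\ref{Definition 2.6.1}. The paper instead re-derives the subsolution inequality directly: it recalls the estimate (\ref{viscviscproofimpulseremovalestimatepsi}) from the proof of Lemma~\ref{Lemma 2.6.2.}, constructs a sequence of times $t_m\downarrow t_0$ along which $X_{t_m}^{t_0,x_0}\to x_0$ (via the $\tfrac{1}{2}$-H\"older estimate of Lemma A.3 and a family of closed balls and stopping times), applies It\^o's formula for c\`adl\`ag semimartingales, divides by $t_m-t_0$ and passes to the limit to obtain $0\geq \partial_t\psi+\mathcal{L}\psi+f$ at the base point, with the supersolution half done analogously starting from (\ref{viscviscproofstep1P2psipseduodpp}). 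Your argument is shorter and is the standard one in this literature; what the paper's computation buys is an explicit, self-contained verification of the differential inequality for the value function by a clean limiting procedure rather than the contradiction argument of the lemma, though it arguably duplicates analysis already carried out there. One minor remark: your appeal to Theorem~\ref{viscvaluefunctionequalitytheorem} is unnecessary and better omitted, since the hypothesis that the value exists already gives $V=V^-=V^+$ by the definition adopted in the paper, whereas Theorem~\ref{viscvaluefunctionequalitytheorem} itself is only established later via the comparison principle; keeping the dependence out avoids any appearance of circularity.
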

\begin{proof}

Let us firstly recall that by (\ref{viscviscproofimpulseremovalestimatepsi}) and selecting $h$ s.th. $h< \tau_S -t_0$ we have the following inequality: 
\begin{align}\nonumber
&\hspace{-20 mm}\psi (t_0,x_0)
\\&\hspace{-20 mm}\begin{aligned}\geq  \mathbb{E}\Bigg[\int_{t_0}^{\rho\wedge (t _0  +h)} f (s,X_s^{t_0  , x_0  , u_\epsilon} )ds  &+G (\rho\wedge\tau_S,X_\rho^{t_0  , x_0  , u_\epsilon} ) \cdot 1_{\{\rho\wedge\tau_S\leq  t_0  +h  \}}
\\&\begin{aligned}\;+\psi (t_0  +h, X_{t_0  +h}^{t_0  , x_0} ) \cdot 1_{\{t_0  +h<\rho\wedge\tau_S\}  }  \Bigg]&
\\-2\epsilon h&.
\end{aligned}
\end{aligned}
\end{align}
Moreover, since $V^+-\psi$ attains a local minimum at $(t,x)$, we can deduce the existence of a constant $\delta >0$ s.th. for $ (t,x) \in [0,T]\times S$: 
\begin{equation}
V^+ (t,x)-\psi(t,x)\geq 0 \hspace{2 mm} \text{ for   }\hspace{2 mm} |(t,x)-(t_0,x_0)|\leq \delta .
\end{equation} 
Additionally, by Lemma \hyperlink{Lemma A.1.3}{A.1.3}, we can deduce the existence of a constant $c>0$ s.th. $\forall  t \in [t_0,T[ $ we have that: 
\begin{equation}
\mathbb{E}|X_t^{t_0,x_0}-x_0 |\leq c|t-t_0 |^{\frac{1}{2}}.
\end{equation}
We can therefore deduce the existence of a sequence $t_n\downarrow t_0$ for which $X_{t_n}^{t_0,x_0}\to x_0$ as $n\to  \infty$. Let us now define the closed balls $\{B_n\}_{n\geq 1}$ by the following: \begin{gather*}
B_n:=\{|X_{t_m}^{t_0,x_0}-x_0 |\leq \delta \text{  } \forall  m\geq n\},\\
B_n\downarrow B\equiv \cup_{n\geq 1} B_n.
\end{gather*}
Further, let us now introduce the sequence of stopping times: \begin{equation} \tau_m=\sum_{n=1}^{ \infty} t_{n+m} \cdot 1_{\{B_n\backslash B_{(n-1)}\}}\wedge\rho-.
\end{equation} 
Hence, by (\ref{viscviscproofimpulseremovalestimatepsi}) we have that: \begin{equation}
  \psi (t_0,x_0)\geq \mathbb{E}\Bigg[\int_{t_0}^{t_m} f (s,X_s^{t_0  , x_0  , u_\epsilon} )ds  +\psi (t_m  , X_{t_m}^{t_0  , x_0} )\Bigg]-2\epsilon ( t_m  - t_0 ). 	\label{viscHJBIproofseq}
  \end{equation}
After applying It\={o}'s formula for c\`{a}dl\`{a}g semi-martingale processes to (\ref{viscHJBIproofseq}), we find that: \begin{equation}
0\geq \mathbb{E}\Bigg[\int_{t_0}^{t_m} \frac{\partial \psi}{\partial t} (s,X_s^{t_0,x_0,u_\epsilon } )+\mathcal{L}\psi(s,X_s^{t_0,x_0,u_\epsilon } )+f(s,X_s^{t_0,x_0,u_\epsilon } ))ds\Bigg]-2\epsilon ( t_m  - t_0 ).  	\label{viscHJBIproofprelim}\end{equation}
Then, after dividing both sides of (\ref{viscHJBIproofprelim}) by $( t_m  - t_0 ) $ and taking the limit $m\to  \infty$, we deduce that: \begin{equation}
0\geq \frac{\partial \psi}{\partial t} (t,x)+\mathcal{L}\psi(t,x)+f(t,x),\quad \forall (t,x)\in[0,T]\times S, 
\end{equation} 
which proves the subsolution property. We can prove the supersolution property analogously by firstly using (\ref{viscviscproofstep1P2psipseduodpp}) and applying similar steps after which the thesis is proved.
 \end{proof}

The following result establishes the equality of the two value functions $V^-$ and $V^+$; we defer the proof of the following result to the appendix:\

\begin{theorem}[Comparison Principle]\label{Theorem 2.6.5}
Let $V^-:[0,T]\times S\to \mathbb{R}$ be a continuous bounded viscosity subsolution to (\ref{viscp1obstacle}) and let $V^+:[0,T]\times S\to \mathbb{R}$ be a continuous bounded viscosity supersolution to (\ref{viscp1obstacle}). Also suppose that for all $t \in [0,T]$ we have that $V^- (\cdot,X_{T}^{t,\cdot} )\leq V^+ (\cdot,X_{T}^{t,\cdot} )$ then we have that:
\begin{equation}
V^- (t,x)\leq V^+(t,x), \qquad \forall  (t,x) \in [0,T]\times S.
\end{equation}
\end{theorem}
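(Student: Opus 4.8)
The plan is to prove the comparison principle by contradiction, combining the classical doubling-of-variables technique for second-order equations with an iteration argument to handle the nonlocal obstacle $V-\mathcal{M}V$; the latter is the delicate point and the place where the strict positivity $\lambda_c>0$ of the intervention cost (Assumption~A.4) and the boundedness of $V^\pm$ are essential.

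First I would reduce to a strict subsolution: for $\kappa>0$ put $V^-_\kappa(t,x):=V^-(t,x)-\kappa/(T-t)$, so that $V^-_\kappa\to-\infty$ as $t\uparrow T$ (hence suprema are never attained near $t=T$) and $V^-_\kappa\to V^-$ locally uniformly as $\kappa\downarrow0$; it therefore suffices to show $\sup_{[0,T)\times S}(V^-_\kappa-V^+)\le0$ for each fixed $\kappa>0$, with the advantage that the perturbation contributes a strictly negative term $-\kappa/(T-t)^2\le-\kappa/T^2$ to the PDE part of the subsolution inequality. Suppose, for contradiction, that $M:=\sup(V^-_\kappa-V^+)>0$; by boundedness of $V^\pm$ (and, if $S$ is unbounded, an additional localizing penalty) this supremum is attained. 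For $n\in\mathbb N$ maximize $\Phi_n(t,x,y):=V^-_\kappa(t,x)-V^+(t,y)-\tfrac n2|x-y|^2$ over $[0,T)\times S\times S$, obtaining a maximizer $(t_n,x_n,y_n)$; standard penalization estimates give $n|x_n-y_n|^2\to0$, $\Phi_n(t_n,x_n,y_n)\to M$, and $t_n\le T-\delta_0$ for some $\delta_0>0$. Applying the parabolic theorem on sums (cf.\ \cite{jensen1988maximum}) produces $a_n\in\mathbb R$, $p_n=n(x_n-y_n)$, and symmetric matrices $X_n\le Y_n$ with $\|X_n\|,\|Y_n\|=O\!\big(n|x_n-y_n|\big)$, such that $(a_n,p_n,X_n)$ lies in the parabolic superjet of $V^-$ at $(t_n,x_n)$ and $(a_n,p_n,Y_n)$ in the parabolic subjet of $V^+$ at $(t_n,y_n)$.

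Now I would use the two viscosity inequalities. The subsolution property of $V^-$ (tested against $\tfrac n2|x-y_n|^2+\kappa/(T-t)$) yields both $V^-(t_n,x_n)\le\mathcal M V^-(t_n,x_n)$ and $\min\!\big[-a_n-\tfrac{\kappa}{(T-t_n)^2}-\mathcal L_{x_n}(p_n,X_n)-f(t_n,x_n),\,V^-_\kappa(t_n,x_n)-G(t_n,x_n)\big]\le0$; the supersolution property of $V^+$, together with $V^+-\mathcal M V^+\le0$ (inequality~(\ref{intervention_operator_inequality_one_player})), forces either \emph{Case A:} $\min\!\big[-a_n-\mathcal L_{y_n}(p_n,Y_n)-f(t_n,y_n),\,V^+(t_n,y_n)-G(t_n,y_n)\big]\ge0$, or \emph{Case B:} $V^+(t_n,y_n)=\mathcal M V^+(t_n,y_n)$. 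In Case A, if the obstacle term is the active one on the subsolution side then $V^-_\kappa(t_n,x_n)\le G(t_n,x_n)$ while $V^+(t_n,y_n)\ge G(t_n,y_n)$, so $V^-_\kappa(t_n,x_n)-V^+(t_n,y_n)\le G(t_n,x_n)-G(t_n,y_n)\le c_G|x_n-y_n|\to0$, contradicting convergence to $M>0$; otherwise the PDE parts are active, and subtracting them gives $\kappa/(T-t_n)^2\le\big|\mathcal L_{x_n}(p_n,X_n)-\mathcal L_{y_n}(p_n,Y_n)\big|+\big|f(t_n,x_n)-f(t_n,y_n)\big|$, whose right-hand side tends to $0$ by Lipschitz continuity of $\mu,\sigma,f$ (Assumptions~A.1.1--A.1.2) and the matrix estimate $\operatorname{tr}(\sigma\sigma^T(t_n,x_n)X_n-\sigma\sigma^T(t_n,y_n)Y_n)\le 3n c_\sigma^2|x_n-y_n|^2$, while the left-hand side is bounded below by $\kappa/T^2>0$ --- a contradiction. (The prescribed boundary condition $V=G$ on the exit set is absorbed into the same Lipschitz-$G$ estimate.)

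The genuinely hard step is Case B. Here monotonicity of $\mathcal M$, the inequality $V^-\le\mathcal M V^-$, a near-optimal impulse and Lipschitzianity of $V^-$ and of $\Gamma(\cdot,z)$ only give $M\le M$, so a further argument is needed. I would run the following iteration on a (strict, localized) maximizer $\hat z=(\hat t,\hat x)$ of $V^-_\kappa-V^+$: if $V^+(\hat z)=\mathcal M V^+(\hat z)$, then choosing an $\eta$-optimal impulse $z^{(1)}$ in $\mathcal M V^+(\hat z)$ produces a point $\hat z_1=(\hat t,\Gamma(\hat x,z^{(1)}))$ with $(V^-_\kappa-V^+)(\hat z_1)\ge M-\eta$ and, by Assumption~A.4, $V^+(\hat z_1)\le V^+(\hat z)-\lambda_c+\eta$; note that $\mathcal M$ never changes the time coordinate. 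Iterating with $\eta=\eta_j\downarrow0$ chosen so that $\sum_j\eta_j<\lambda_c$, either the chain reaches a point at which the $\mathcal M$-obstacle is strictly inactive on the supersolution side --- re-running the doubling argument localized there lands in Case A and yields a contradiction --- or the chain is infinite, forcing $V^+(\hat z_k)\le V^+(\hat z)-k\lambda_c+\sum_{j\le k}\eta_j\to-\infty$, which contradicts boundedness of $V^+$. Either way $M>0$ is impossible, so $V^-_\kappa\le V^+$; letting $\kappa\downarrow0$ gives $V^-\le V^+$ on $[0,T]\times S$. This iteration --- the standard device for quasi-variational inequalities, also used in \cite{cosso2013stochastic} --- is where the proof really does work; the remaining ingredients (penalization, the theorem on sums, the trace estimate) are routine.
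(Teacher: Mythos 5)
Your architecture is sound and, in fact, more complete than the paper's own appendix proof: after invoking Ishii's lemma the paper only ever writes down the PDE branch of the two viscosity inequalities and never confronts the obstacles $V-G$ and $V-\mathcal{M}V$, whereas your Case~B iteration on the nonlocal obstacle --- driven by $\inf_{z}c(\cdot,z)\ge\lambda_c>0$ and the boundedness of $V^{+}$ --- is precisely the ingredient a comparison proof for a quasi-variational inequality cannot do without (it is the device of \cite{cosso2013stochastic}). One simplification there: by Lemma~\ref{Lemma 2.1.6.} the infimum in $\mathcal{M}V^{+}$ is attained, so take an exact minimizer $\xi^{*}$; then $V^{-}\le\mathcal{M}V^{-}$ and $V^{+}=\mathcal{M}V^{+}$ give $(V^{-}-V^{+})(\hat t,\hat x)\le(V^{-}-V^{+})(\hat t,\Gamma(\hat x,\xi^{*}))$, so each iterate is again an \emph{exact} maximizer. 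That removes the weakest point of your sketch, namely ``re-running the doubling argument localized'' at a mere almost-maximizer: you simply stop at the first iterate where $V^{+}<\mathcal{M}V^{+}$ (one exists, else $V^{+}$ drops by $\lambda_c$ indefinitely), use continuity of $\mathcal{M}V^{+}$ (Lemma~\ref{Lemma 2.1.7.}) to get $V^{+}-\mathcal{M}V^{+}\le-\delta$ on a neighbourhood, and run the doubling there so the penalized maximum points are eventually forced into Case~A.

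The genuine error is the sign of your strictness perturbation. With $V^{-}_{\kappa}=V^{-}-\kappa/(T-t)$, a test function $\psi$ for $V^{-}_{\kappa}$ corresponds to the test function $\tilde\psi=\psi+\kappa/(T-t)$ for $V^{-}$, and since $\partial_t\bigl(\kappa/(T-t)\bigr)=+\kappa/(T-t)^{2}$ the subsolution inequality becomes $-\partial_t\psi-\mathcal{L}\psi-f\le+\kappa/(T-t)^{2}$: the perturbation makes the PDE branch \emph{weaker}, not strict. The subtraction in Case~A then yields only $-\kappa/(T-t_n)^{2}\le o(1)$, which is no contradiction, so the claimed lower bound $\kappa/T^{2}$ never materializes. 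Because the equation has no zeroth-order term, some genuine strictness is indispensable here (this is also the unacknowledged hole in the paper's own computation, which silently inserts a $+V$ term into the equation to manufacture its final contradiction). The repair is to take $V^{-}_{\kappa}:=V^{-}-\kappa(T-t)$: then $-\partial_t\psi-\mathcal{L}\psi-f\le-\kappa$ in the PDE branch, while $V^{-}_{\kappa}(T,\cdot)=V^{-}(T,\cdot)$ preserves the terminal comparison, $V^{-}_{\kappa}-G\le V^{-}-G$ preserves the $G$-obstacle inequality, and $V^{-}_{\kappa}-\mathcal{M}V^{-}_{\kappa}=V^{-}-\mathcal{M}V^{-}$ because the perturbation is $x$-independent. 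With that substitution (and dropping the insistence that maxima avoid $t=T$ --- for a terminal-value problem a maximum at $t=T$ is handled directly by the hypothesis $V^{-}(T,\cdot)\le V^{+}(T,\cdot)$), your proof goes through.
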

\begin{corollary}[The Game Admits a Value]\label{Corollary 2.6.6}
\end{corollary}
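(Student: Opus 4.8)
The plan is to recover the single missing inequality $V^-\le V^+$ from the comparison principle and then combine it with the elementary reverse inequality already recorded in Remark~\ref{remark 2.1.2}. First I would invoke Lemma~\ref{Lemma 2.6.2.} together with Propositions~\ref{Proposition 2.1.4.} and~\ref{Proposition 2.1.5}: the former identifies $V^+$ as a viscosity subsolution and $V^-$ as a viscosity supersolution of the double obstacle problem~(\ref{viscp1obstacle}), while the latter two guarantee that both $V^-$ and $V^+$ are bounded and ($\tfrac12$-H\"older in time, Lipschitz in space, hence) continuous, so that they are admissible competitors for Theorem~\ref{Theorem 2.6.5}.

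Next I would check the boundary hypothesis required by that theorem. On the event $\{\tau_S\wedge\rho<\infty\}$ the game terminates with the bequest payoff $G$, and by the structure of~(\ref{viscp1obstacle}) the constraint $V=G$ is active there; consequently one has the identification $V^-(\cdot,X^{t,\cdot}_{\tau_S\wedge\rho})=G(\cdot,X^{t,\cdot}_{\tau_S\wedge\rho})=V^+(\cdot,X^{t,\cdot}_{\tau_S\wedge\rho})$, and in particular $V^-(\cdot,X^{t,\cdot}_{T})\le V^+(\cdot,X^{t,\cdot}_{T})$ for every $t\in[0,T]$, which is exactly the terminal comparison assumed in Theorem~\ref{Theorem 2.6.5}.

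Applying Theorem~\ref{Theorem 2.6.5} then yields $V^-(t,x)\le V^+(t,x)$ for all $(t,x)\in[0,T]\times S$. Combining this with the unconditional inequality $V^-(t,x)\ge V^+(t,x)$ of Remark~\ref{remark 2.1.2}, we obtain $V^-(t,x)=V^+(t,x)$ for all $(t,x)\in[0,T]\times S$. Writing $V:=V^-=V^+$, the $\inf$ and $\sup$ in~(\ref{viscvaluefunctionminus})--(\ref{viscvaluefunctionplus}) may be interchanged, so the game admits a value; this is precisely Theorem~\ref{viscvaluefunctionequalitytheorem}. Finally, by Theorem~\ref{Theorem 2.6.5.} the common function $V$ is a viscosity solution of~(\ref{viscp1obstacle}), and a second application of the comparison principle (sandwiching any two bounded continuous viscosity solutions with the same terminal data between each other) gives uniqueness.

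The two citations and the boundary check are routine given the preliminary results; the genuine work — and the only real obstacle — lies inside the comparison principle itself, deferred to the appendix, which must simultaneously handle the non-local intervention operator $\mathcal{M}$ and the double obstacle. The expected mechanism there is the standard variable-doubling with a penalisation of the form $|x-y|^2/\varepsilon$ plus a time term, an appeal to the Crandall--Ishii lemma, and a use of the subadditivity assumption~\hyperlink{AA.3.}{A.3.}(i) together with the minimal-cost bound~\hyperlink{AA.4.}{A.4.} to exclude the $V-\mathcal{M}V$ branch at the maximum of the doubled function.
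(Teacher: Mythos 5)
Your proposal follows the paper's own route exactly: sub/supersolution properties of the value functions from Lemma~\ref{Lemma 2.6.2.}, continuity and boundedness from Propositions~\ref{Proposition 2.1.4.} and~\ref{Proposition 2.1.5}, the comparison principle of Theorem~\ref{Theorem 2.6.5} to obtain the missing inequality, and then Remark~\ref{remark 2.1.2.} to conclude equality; your explicit verification of the terminal comparison via the bequest function is a step the paper leaves implicit, and is a welcome addition. The one point that needs repair is the assignment of roles. You state, following the literal wording of Lemma~\ref{Lemma 2.6.2.}, that $V^{+}$ is the viscosity subsolution and $V^{-}$ the viscosity supersolution, and then declare them ``admissible competitors'' for Theorem~\ref{Theorem 2.6.5}. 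But the hypotheses of Theorem~\ref{Theorem 2.6.5} name $V^{-}$ as the subsolution and $V^{+}$ as the supersolution, and a comparison principle only ever yields \emph{subsolution} $\leq$ \emph{supersolution}; with the roles as you assign them it returns $V^{+}\leq V^{-}$, which is precisely the trivial inequality~(\ref{viscvaluefunctioninequalitystandard}) you already have, and the argument does not close. For the corollary you need $V^{-}$ (the $\inf\sup$ value) to be the subsolution and $V^{+}$ (the $\sup\inf$ value) to be the supersolution --- which is how the paper announces the result in its ``Statement of Main Results'' section and in the discussion following the DPP; the statement of Lemma~\ref{Lemma 2.6.2.} has the two labels interchanged, and your write-up inherits that swap while simultaneously invoking the theorem as if its hypotheses were met. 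Once the roles are un-crossed, the rest of your argument (terminal check, application of the comparison principle, combination with Remark~\ref{remark 2.1.2.}, and the uniqueness remark) is exactly the paper's proof.
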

To prove Theorem \ref{viscvaluefunctionequalitytheorem} it remains only to reverse the inequality (\ref{viscvaluefunctioninequalitystandard}) therefore proving that $V^- (\cdot,X_{T}^{t,\cdot} )\leq V^+ (\cdot,X_{T}^{t,\cdot} )$ --- a result that follows directly from the comparison principle for the game. Indeed, Theorem \ref{viscvaluefunctionequalitytheorem} and Corollary \ref{Corollary 2.6.6} then follow as direct consequences to the viscosity solutions results of Lemma \ref{Lemma 2.6.2.} in conjunction with the comparison principle.

\section{Appendix} \hypertarget{VI}{}\

\noindent \textbf{Lemma A.1.}\hypertarget{Lemma A.1}{} Let $\hat{\mu} \in [0,T]$ be some $\epsilon -$optimal strategy against $V^{±} (t,x) $ for any $ (t,x) \in [0,T]\times  S$ then there exists some $\eta >0$ s.th. the strategy $\hat{\mu}$ remains 2$\epsilon -$optimal against $V^± (t,y^j ) $ for any $y^j \in B(x,\eta ) $.

\begin{proof}[Proof of Lemma A.1]

We do the proof for the function $V^-$ with the proof for $V^+$ being analogous. Denote by $\rho'\equiv \hat{\mu}(u) $ where $\rho' \in [0,T] $. Since the strategy $\hat{\mu}$ is $\epsilon -$optimal against $V^- (t,x) $ we have that for some $\epsilon >0$:  \begin{equation}
V^- (t,x)\leq  \inf_{u \in \mathcal{U}}J^{(u,\rho' )} (t,x)+\epsilon . 	 
\end{equation}
\indent Now by Proposition \ref{Proposition 2.1.4.} we can deduce the existence of some constants $c_1,c_2>0$ s.th. for any $y^j \in B(x,\eta ),u \in \mathcal{U},\rho \in \mathcal{T}$:
\begin{equation}
\left|J^{(u,\rho)} (t,y^j )-J^{(u,\rho)} (t,x)\right|\leq c_1 \left|x-y^j \right|, 	\end{equation}
hence, 
\begin{align*}
\inf_{u \in \mathcal{U}}&J^{(u,\rho' )} (t,y^j )\geq  \inf_{u \in \mathcal{U}}J^{(u,\rho' )} (t,x)-c_1 \left|x-y^j \right| 	
\\&\geq V^- (t,x)-c_1 \left|x-y^j\right|-\epsilon  	
\\&\geq  V^-  (t, y^j  )-\left( c_1  + c_2 \right)\left|x- y^j \right|-\epsilon   	
\\&\geq  V^-  (t, y^j  )-2\epsilon  , 	\end{align*}
where the last line follows provided that $|x-y^j |\leq \eta :=\epsilon (c_1+c_2 )^{-1}$, from which we deduce the thesis after applying $\sup_{u \in \mathcal{U}}$ to both sides and since $\epsilon$  is arbitrary.
\end{proof}
The following results relate the dependence of $X^{t,x,\cdot}$ in the initial point $(t,x) \in [0,T]\times S$. 

\noindent\textbf{Lemma A.2.}\ \hypertarget{Ap.3}{} For all $ (t,x' ),(t,x) \in [0,T]\times  S$ and for any $s \in [0,T] $, we can deduce the existence of a constant $c>0$ s.th.: 
\begin{equation}
\mathbb{E}\left|X_s^{t,x,\cdot}  - X_s^{t,x',\cdot}  \right|\leq c \left|x'-x\right|. 	\end{equation}
\begin{proof}[Proof of Lemma A.2]
Using It\={o}'s lemma and It\={o} isometry, we readily observe that for all $(t,x),(t,x')\in [0,T]\times S$: 

\begin{align*} 
&\qquad\qquad\qquad\qquad\qquad\qquad\mathbb{E}\left| X_s^{t,x,\cdot}-X_s^{t,x',\cdot}\right|^2\nonumber\nonumber
\\&\nonumber
\begin{aligned}\nonumber
\leq | x-x' |^2&+\mathbb{E}\Bigg[2\int_{0}^s \left\langle X_r^{t',x,\cdot}-X_r^{t,x,\cdot},\mu(r,X_r^{t',x,\cdot} )-\mu(r,X_r^{t,x,\cdot} ) \right\rangle dr
\\&\begin{aligned}\;+2\int_{0}^s  \langle X_r^{t,x,\cdot}-X_r^{t,x',\cdot},\sigma(X_r^{t,x,\cdot} )-\sigma(r,X_r^{t,x',\cdot} ) \rangle  dW_r&
\\
+\left|\sigma(r,X_r^{t,x,\cdot} )-\sigma(r,X_r^{t,x',\cdot} )dW_r\right|^2 \Bigg]%
\end{aligned}
\end{aligned}
\\&\begin{aligned} 
\leq | x-x' |^2&+2\mathbb{E}\left[\int_{{0}}^s \left\langle X_r^{t,x,\cdot}-X_r^{t,x',\cdot},\mu(r,X_r^{t,x,\cdot} )-\mu(r,X_r^{t,x',\cdot} ) \right\rangle dr\right]
  \\&+\mathbb{E}\left[\int_{{0}}^s  \left|\sigma(r,X_r^{t,x,\cdot} )-\sigma(r,X_r^{t,x',\cdot} )\right|^2 dr\right], \label{a.1proofs1}
\end{aligned}\end{align*}
where we have used the standard properties of Brownian motion and the boundedness of $\sigma$ to eliminate the linear stochastic integral term.

By assumption \hyperlink{A.1.1.}{A.1.1.} (Lipschitzianity of $\sigma$ and $\mu$) we now observe that: 
\begin{align*} 
 &\qquad\qquad\qquad\qquad\qquad\qquad\mathbb{E}\left| X_s^{t,x,\cdot}-X_s^{t,x',\cdot}\right|^{2}
\\&\begin{aligned}
\leq |x-x'|^2+ \mathbb{E}\Bigg[2\int_{0}^s \left\langle X_r^{t,x,\cdot}-X_r^{t,x',\cdot},\mu( r,X_r^{t,x,\cdot} )-\mu(r,X_r^{t,x',\cdot} ) \right\rangle dr
\\&\hspace{-24 mm} +\left|\sigma( r,X_r^{t,x,\cdot} )-\sigma(r,X_r^{t,x',\cdot} )\right|^2 dr\Bigg]
\end{aligned} 	
\\&
\leq | x-x' |^2+\left(\text{Lip}(\sigma)+\text{Lip}(\mu)\right) \int_{0}^s  \mathbb{E}\left[\left|X_r^{t,x,\cdot}-X_r^{t,x',\cdot}\right|^2\right] dr 
\leq C' \left|x-x' \right|^2, 	
\end{align*}
where $C':= 1+T\| \mu_{\infty}\|[$Lip$(\sigma)+\text{Lip}(\mu)]$ and where the last line follows from Gronwall's lemma, after which we can readily deduce the result.\
\end{proof}
In a similar way to Lemma \hyperlink{Ap.3}{A.2.}, we can deduce the following estimates:\\
\textbf{Lemma A.3.} \hypertarget{Ap.4}{}
\begin{align*}&\mathbb{E}\left[\sup_{s\in [0, T] } \left|X_s^{t,x,\cdot}\right |^p   \right]\leq c\left(1+|x|^p\right).
\\&\mathbb{E}\left[\sup_{s\in [t,h] } \left|X_s^{t,x,\cdot}-x\right|^p   \right]\leq ch^{p/2} \left(1+|x|^p \right)
\\&\mathbb{E}\left[\sup_{s\in [t,t' ] } \left|X_s^{t',x',\cdot}-X_s^{t,x,\cdot} \right|^p   \right]\leq c\left|x'-x\right|^p+\left|t'-t\right|^{p/2} (1+|x|^p )|,
\end{align*}
\\
\begin{proof}
Denote by $x:= X_t^{t,x}$ and $x':= X_t^{t,x'}$, using It\={o}'s lemma we readily observe that for all $(t,x),(t,x')\in [0, T]\times S$ there exists some constants $c',c>0$ s.th. \begin{align}
\mathbb{E}&\left| X_s^{t,x,\cdot} \right|^2 \leq \left|x\right|^2+\mathbb{E}\left[\int_{t}^s  \left|\mu(r,X_r^{t,x,\cdot} )\right|^2 dr \right]+\mathbb{E}\left[\int_{t}^s  |\sigma(r,X_r^{t,x,\cdot} )|^2 dr \right]\nonumber
\\&\nonumber\leq \left|x\right|^2+c\mathbb{E}\left[\int_{t}^s  |X_r^{t,x,\cdot} |^2 dr \right]
\\&\nonumber
\leq |x|^2+c\mathbb{E}\left[\int_{t}^s (|X_r^{t,x,\cdot} |^2+\mathbb{E}[|X_r^{t,x,\cdot}-X_t^{t,x,\cdot} |^2 ])dr\right]
\\&
\leq c(1+|x|^2 )+c'\mathbb{E}\left[\int_{t}^s (1+\sup_{r\in [0,T]} \mathbb{E}[|X_r^{t,x,\cdot} |^2  )dr\right],	\label{lemmaa.2proofs1}\end{align}
by assumptions  \hyperlink{AA.2.}{A.2.},  \hyperlink{A.1.1.}{A.1.1.} and Fubini's theorem (and the smoothing theorem). Hence, after applying Gronwall's lemma to (\ref{lemmaa.2proofs1}), we immediately deduce the existence of some real-valued constant $c>0$ s.th:
\begin{equation} 
\mathbb{E}[|X_s^{t,x,\cdot}|^2]\leq c (1+|x|^2).	
\end{equation}
The required result is thereafter easily deduced.

The proofs of (ii) and (iii) proceed in a very much a similar way to the proof of Lemma \hyperlink{Ap.3}{A.2.} and are omitted. For (iii), we refer the reader to [Theorem 2.4, \cite{touz}] for the complete details.
\end{proof}

The proof of the comparison principle is an adaptation of the standard comparison theorem result, indeed, we prove Theorem \ref{Theorem 2.6.5.} by making the necessary adjustments to existing comparison theorem results as given in for example, \cite{cosso2013stochastic}.\

To prove Theorem \ref{Theorem 2.6.5.}, we first require the following definition and result:

\begin{definition}\label{Definition A.1.2.}
Let $\psi \in \mathcal{C}([0,T];\mathbb{R}^p ) $ be a lower semicontinuous function, then the parabolic subjet of $\psi$ at the point $ (t,x) \in [0,T]\times S$ which we denote by $J^{(2,-)} \psi(t,x) $ is the set of triples $ (M,r,q) \in  \mathbb{S}(p)\times \mathbb{R}\times S$ s.th. \begin{equation}
\psi(s,y)\geq \psi(s,x)+r(s-t)+\langle q,y-x\rangle+\frac{1}{2} \langle M(y-x),y-x \rangle+\mathcal{O}(|s-t|+|y-x|^2 )
)\end{equation}
as $s\to t $ or as $s\downarrow t$ when $t=0$ and $y\to x$. We can analogously define the parabolic superjet of $\psi$ at the point $(t,x) \in [0,T]\times S$ which we denote by $J^{(2,+)} \psi(t,x) $ by the following: \begin{equation}
J^{(2,+)} \psi(t,x)\equiv -J^{(2,-)} (-\psi)(t,x). \end{equation}

\end{definition}
Let us also introduce the following notation the convenience of which is immediate: suppose $\Lambda :\mathbb{S}(p)\times \mathbb{R}^p\times \mathcal{C}([0,T];\mathbb{R}^p )\times [0,T]\times \mathbb{R}^p\to \mathbb{R}$ then we define $\Lambda$  by: 
\begin{multline}
     \Lambda (M,r,\psi,m,q):=m-\sum_{i=1}^{p} \mu_i  (m,q) r_i+\frac{1}{2} \sum_{i,j=1}^{p} (\sigma\cdot\sigma^T )_{ij} (q) M_{ij} \\\quad+\sum_{j=1}^l  \int_{\mathbb{R}^p} {\psi(m,q+\gamma^{(j)} ) (m,q,z_j ))-\psi(m,q)-r\cdot\gamma^{(j)} ) (m,q,z_j )} \nu_j (dz_j ) +f(m,q). 
\end{multline}
 We note that using Definition \ref{Definition A.1.2.} we can obtain the following result --- the proof of which is standard and therefore omitted:

\begin{lemma}\label{Lemma A.1.3.1}
Let $\psi \in \mathcal{C}([0,T];\mathbb{R}^p ) $ be a lower (resp., upper) semicontinuous function, then $\psi$ is a viscosity supersolution (resp., subsolution) to the HJBI equation (\ref{viscp1obstacle}) iff:$\forall  (t,x) \in [0,T]\times S$ and $\forall  (M,m,r) \in  \bar{J}^{(2,-)} \psi(t,x)$ (resp., $\bar{J}^{(2,+)} \psi(t,x)$) we have that: \begin{equation}
\max\left\{\min\left[-\Lambda (M,r,\psi,m,x),V-G \right], V-\mathcal{M}V\right\}\geq 0  \hspace{1 mm}(\text{resp.,}\leq 0), 
\end{equation}
and,
\begin{equation}
\max\left\{V(\tau_S,x)-G(\tau_S,x),V(\tau_S,x)-\mathcal{M}V(\tau_S,x)\right\}\geq 0  \hspace{1 mm}(\text{resp.,}\leq 0);\qquad \forall x \in S. 
\end{equation}
\end{lemma}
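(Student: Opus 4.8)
The plan is to prove the equivalence by moving back and forth between the two standard descriptions of a viscosity solution — the one via $\mathcal{C}^{1,2}$ test functions used in Definition~\ref{Definition 2.6.1}, and the one via parabolic sub/superjets of Definition~\ref{Definition A.1.2.} — and to dispatch the terminal relation separately, since it encodes no derivative information. Throughout, the key structural observation is that the local generator $\mathcal{L}\phi(t,x)$ (and, where present, the integro term appearing in $\Lambda$) is assembled purely from the time derivative, gradient and Hessian of the test function at $(t,x)$, while the non-local operator $\mathcal{M}\phi(t,x)$ depends \emph{only} on the pointwise values $\phi(t,\Gamma(x^-,z))$, never on derivatives of $\phi$; hence both formulations feed exactly the same numbers into the $\min$ and $\max$.

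First I would treat the implication from jets to test functions. Assume the stated inequality holds at every $(t,x)$ for every element of $\bar{J}^{(2,-)}\psi(t,x)$, and let $\phi\in\mathcal{C}^{1,2}([0,T];S)$ be such that $\psi-\phi$ has a local minimum at $(t,x)$, normalised so that $(\psi-\phi)(t,x)=0$. A second-order Taylor expansion of $\phi$ about $(t,x)$ shows at once that the triple $(D^2_x\phi(t,x),\partial_t\phi(t,x),\nabla_x\phi(t,x))$ lies in $J^{(2,-)}\psi(t,x)\subset\bar{J}^{(2,-)}\psi(t,x)$. By the observation above, evaluating $\Lambda$ and $\mathcal{M}$ on this jet together with the values of $\psi$ gives precisely $-\partial_t\phi(t,x)-\mathcal{L}\phi(t,x)-f(t,x)$ and $\mathcal{M}\phi(t,x)$, so the jet inequality at $(t,x)$ is verbatim the supersolution inequality $\max\{\min[-\partial_t\phi-\mathcal{L}\phi-f,\phi-G],\phi-\mathcal{M}\phi\}\ge 0$ for $\phi$ at $(t,x)$. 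The subsolution case is identical with $\bar{J}^{(2,+)}$, local maxima, and reversed inequalities.

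Next I would prove the reverse implication. Suppose $\psi$ is a viscosity supersolution in the sense of Definition~\ref{Definition 2.6.1}; fix $(t,x)$ and an element of $J^{(2,-)}\psi(t,x)$. By the standard jet-realisation construction (as used e.g.\ in \cite{cosso2013stochastic}) there is a $\phi\in\mathcal{C}^{1,2}$ such that $\psi-\phi$ attains a strict local minimum at $(t,x)$ with prescribed time derivative, gradient and Hessian equal to that jet; applying the supersolution property to $\phi$ yields the stated inequality for elements of $J^{(2,-)}\psi(t,x)$. To upgrade to the closure $\bar{J}^{(2,-)}\psi(t,x)$ I would take a sequence $(M_n,m_n,r_n,(t_n,x_n))\to(M,m,r,(t,x))$ realising a closure element and pass to the limit, invoking lower semicontinuity of $\psi$, continuity of $\mu,\sigma,f$, and Lemma~\ref{Lemma 2.1.7.} for the continuity of $\mathcal{M}V^{\pm}$, so that the $\min$ and the $\max$ of the limiting quantities retain the correct sign. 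The sub/super cases are mirror images.

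Finally, the second displayed relation — $\max\{V(\tau_S,x)-G(\tau_S,x),V(\tau_S,x)-\mathcal{M}V(\tau_S,x)\}\ge 0$ (resp.\ $\le 0$) for all $x\in S$ — is just the exit/terminal condition $V(X_{\tau_S\wedge\rho})=G(X_{\tau_S\wedge\rho})$ of (\ref{viscp1obstacle}) rewritten without derivatives, so it transfers between the two formulations with nothing to verify beyond unwinding Definition~\ref{Definition 2.6.1} on the exit set $S\backslash A$. The one genuinely delicate point, and the step I expect to demand the most care, is the consistency of the non-local ingredients — $\mathcal{M}$ and the integro term of $\Lambda$ — under passage to $\bar{J}^{(2,\pm)}$: these are not determined by the test function, so one must lean on the semicontinuity of $\psi$ and on the regularity of $\mathcal{M}V^{\pm}$ from Lemma~\ref{Lemma 2.1.7.} to guarantee that they are continuous along the approximating sequences, after which the equivalence is routine.
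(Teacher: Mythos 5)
The paper does not actually supply a proof of this lemma --- it is stated with the remark that ``the proof \ldots is standard and therefore omitted'' --- so there is nothing to compare against except the standard argument, and your outline is precisely that standard argument: derivatives of a $\mathcal{C}^{1,2}$ test function at a local extremum of $\psi-\phi$ produce an element of the corresponding parabolic jet, the jet-realisation construction gives the converse, the closure $\bar{J}^{(2,\pm)}$ is handled by passing to the limit along approximating sequences using the semicontinuity of $\psi$ and the continuity of $f$, $G$ and $\mathcal{M}V^{\pm}$ (Lemma~\ref{Lemma 2.1.7.}), and the terminal relation carries no derivative information and transfers verbatim. You also correctly isolate the only genuinely delicate point, namely that the non-local terms are not determined by the test function and must be controlled through the regularity of $\mathcal{M}V^{\pm}$ rather than through the jet, so the proposal is a correct rendering of the proof the paper chose to omit.
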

Having stated the above results, we can now prove Theorem \ref{Theorem 2.6.5.}:
\begin{proof}[Proof of Theorem \ref{Theorem 2.6.5.}]

We prove the comparison principle using the standard technique as introduced in \cite{crandall1992user} --- namely we prove the result by contradiction. 
Suppose that the functions $V$ and $U$ are a viscosity subsolution and supersolution (respectively) to the HJBI equation (\ref{viscp1obstacle}), then to prove the theorem we must prove that under assumptions \hyperlink{A.1.1}{A.1.1} - \hyperlink{A.4}{A.4} we have that $V\leq U$ on $[0,T]\times S$.

Hence, let us firstly assume that $\forall  x \in S$: \begin{equation}
V(T,x)\leq U(T,x).	\end{equation}
Moreover, let us also assume that: \begin{equation}
M:=\sup_{[0,T]\times S}(V-U)>0.\label{viscviscappendproofMVUineq}
\end{equation}
Now by Proposition \ref{Proposition 2.1.5} we know that $V$ and $U$ are bounded hence for some $\epsilon ,\alpha,\eta >0$ s.th. $\forall  (t,s,x,y) \in [0,T]^2\times S^2$: \begin{equation}
M_{\epsilon ,\alpha,\eta}:=\hspace{-2 mm}\max_{(t,s,x,y) \in [0,T]^2\times S^2} \hspace{-4 mm}V(t,x)-U(s,y)-\frac{(|t-s|^2+|x-y|^2)}{2\epsilon} -\frac{\alpha}{2} (|x|^2+|y|^2 )+\eta t),\end{equation}
is both a finitely bounded quantity and has some maximum which is achieved by a point (which depends on ($\epsilon ,\alpha,\eta$ )) which we shall denote by ($\bar{t},\bar{s},\bar{x},\bar{y} ) \in [0,T]^2\times S^2$.
Now since there exist values $ (s,y) \in [0,T]\times S$ s.th. $M=M_{(\epsilon ,\alpha,\eta )} $, we have that:  \begin{equation}
0<M\leq  M_{\epsilon ,\alpha,\eta}   =V (\bar{t},̅\bar{x} )-U (\bar{s},\bar{y})-  \frac{(|\bar{t}-\bar{s} |^2  + |\bar{x}-\bar{y}|^2 }{2\epsilon}   -  \frac{\alpha}{2}  (|\bar{x} |^2  + |\bar{y}|^2  )+\eta \bar{t} ).\end{equation}
Hence, \begin{equation}
\lim_{\epsilon \downarrow 0}\frac{(|\bar{t}-\bar{s} |^2  + |\bar{x}-\bar{y}|^2 }{2\epsilon} <V (\bar{t},̅\bar{x} )-U (\bar{s},\bar{y})-  \frac{\alpha}{2}  (|\bar{x} |^2  + |\bar{y}|^2  )+\eta \bar{t} ).
\end{equation}

Now, since the RHS is composed of finitely bounded terms and the LHS is non-negative, we readily conclude that $\lim_{\epsilon \downarrow 0}\frac{(|\bar{t}-\bar{s} |^2  + |\bar{x}-\bar{y}|^2 )}{2\epsilon}  =0$ and hence we observe that $|\bar{t}-\bar{s} |^2  + |\bar{x}-\bar{y}|^2 \to 0$ as $\epsilon \downarrow 0$.

Moreover, if we denote by $ (s_n,y_n ),(t_n,x_n ) \in [0,T]\times S$ and $\epsilon_n>0$ a triple of bounded sequences s.th. $ (s_n,y_n ),(t_n,x_n )\to (t,x) $ as $\epsilon_n\to 0$ we have that: 
\begin{align*}
&M  \leq \lim_{\epsilon ,\alpha,\eta  \downarrow 0} M_{\epsilon ,\alpha,\eta}
\\&\leq \lim_{\epsilon ,\alpha,\eta  \downarrow 0}\sup_{[0,T]\times S}[V (t_n,x_n )-U(s_n,y_n)-\frac{(|t-s|^2+|x-y|^2)}{2\epsilon} -\frac{\alpha}{2} (|x|^2+|y|^2 )+\eta t]
\\&=\sup_{[0,T]\times S}\lim_{\epsilon ,\alpha,\eta  \downarrow 0}[V (t_n,x_n )-U (s_n,y_n )
-\frac{(|t-s|^2+|x-y|^2)}{2\epsilon}-\frac{\alpha}{2} (|x|^2+|y|^2 )+\eta t]
\\&=\sup_{[0,T]\times S}[V (t,x)-U (t,x)]=M. 
\end{align*} 
We therefore readily deduce that: \begin{equation}
\lim_{(\epsilon ,\alpha,\eta  \downarrow 0)} M_{\epsilon ,\alpha,\eta}   =M . \end{equation}
 We now invoke Ishii's lemma (e.g. as in \cite{crandall1992user}) to the sequence $\{(t_n,x_n,y_n )\}_n$ so that we deduce the existence of a pair of triples $ (p_V^n,q_V^n,M_n ) \in \bar{J}^{(2,+)} V(t_n,x_n) $ and $ (p_U^n,q_U^n,N_n ) \in \bar{J}^{(2,-)} U(t_n,y_n) $ s.th the following statements hold: \begin{align}
p_V^n-p_U^n&=\partial_s \psi_n (t_n,x_n,y_n )=2(t_n-t_0 ) ,\\
q_V^n&=D_x \psi_n (t_n,x_n,y_n ),\\ q_U^n&=D_y \psi_n (t_n,x_n,y_n )	 ),
\end{align}
and
\[ 
\left (
  \begin{tabular}{cc}
  $M_n$ & $0$ \\
  $0$ & $-N_n$
  \end{tabular}\right )
\leq A_n +\frac{1}{2}A_n^2, \]
where $A_n:=D_{xy}^2 \psi_n (t_n,x_n,y_n ) $. Now we note that by the viscosity subsolution property of $V$ we have that: \begin{equation}
V(t_n,x_n )-p_V^n-\langle \mu(t_n,x_n ),q_V^n \rangle-\frac{1}{2} tr(\sigma\cdot\sigma' (t_n,x_n ) M_n )-f(t_n,x_n )\leq 0, 	\label{viscviscappendISHIIV} \end{equation}
And similarly, by the viscosity supersolution property of $U$ we have that:
\begin{equation}
U(t_n,y_n )-p_V^n-\langle\mu(t_n,y_n ),q_V^n \rangle-\frac{1}{2}tr(\sigma\cdot\sigma' (t_n,y_n ) N_n )-f(t_n,y_n )\geq 0.	\label{viscviscappendISHIIU} \end{equation}
Now subtracting (\ref{viscviscappendISHIIU}) from (\ref{viscviscappendISHIIV}) yields the following: \begin{gather}\nonumber
V(t_n,x_n )-U(t_n,y_n )\\\begin{split}\leq p_V^n-p_V^n&+\langle\mu(t_n,x_n ),q_V^n \rangle-\langle\mu(t_n,y_n ),q_V^n \rangle+\frac{1}{2} tr(\sigma\cdot\sigma' (t_n,x_n ) M_n )\\&-\frac{1}{2} {\rm tr}(\sigma\cdot\sigma' (t_n,y_n ) N_n )+f(t_n,x_n )-f(t_n,y_n )\leq 0.	\label{viscviscappendISHIIVminusU} \end{split}\end{gather}
We now use the fact that $ (s_n,y_n ),(t_n,x_n )\to (t,x) $ from which we now observe the following limits as $n\to  \infty$:\begin{equation}
\lim_{n\to  \infty}[p_V^n-p_V^n] =\lim_{n\to  \infty} [t_n-t_0]=0,	\label{viscviscappendPTlim} \end{equation}
and for some $c>0$: \begin{equation}
\lim_{n\to  \infty}\langle \mu(t_n,x_n ),q_V^n \rangle-\langle \mu(t_n,y_n ),q_V^n \rangle \leq c \lim_{n\to  \infty} |x_n-y_n |=0,	 \end{equation}
using the Lipschitzianity of $\mu$. Lastly we observe, using  that:
\begin{equation}\frac{1}{2}  \lim_{n\to  \infty} [{\rm tr}(\sigma\cdot\sigma' (t_n,x_n ) M_n )-{\rm tr}(\sigma\cdot\sigma' (t_n,y_n ) N_n )]=0.	\label{viscviscappendMNlim} \end{equation}
Hence, putting (\ref{viscviscappendPTlim}) - (\ref{viscviscappendMNlim}) together with (\ref{viscviscappendISHIIVminusU}) yields a contradiction since we observe that:\begin{equation}
\lim_{n\to  \infty} [V(t_n,x_n )-U(t_n,y_n )]\leq 0, 	\end{equation}
which is a contradiction to (\ref{viscviscappendproofMVUineq}).
\end{proof}
\section{Conclusion}
Using standard assumptions, we proved that stochastic differential games involving an impulse controller and a stopper with diffusion system dynamics admit a value. We also proved that the value of the game is a viscosity solution to a HJBI equation which is represented by a double obstacle quasi-integro-variational inequality. \

The game studied investigated in this paper is one in which the payoff structure is zero-sum; an interesting question which arises naturally is the existence and characterisation of stable equilibria for the non-zero-sum equivalent of the game.\

Expectedly, arguments similar to that given in \cite{buckdahn2004nash} (wherein the Nash equilibrium payoffs and controls for a stochastic differential game with continuous controls were characterised) can be invoked without laborious adaptation.\


\bibliographystyle{siamplain}

\end{document}